\crefname{diagram}{diagram}{diagrams}
\crefname{intn}{interpretation}{interpretations}
\crefname{property}{property}{properties} %
\crefname{proposition}{proposition}{propositions}
\crefname{lemma}{lemma}{lemmas}
\crefname{theorem}{theorem}{theorems}
\newcommand{\ms}{\mathsf}
\newcommand{\mb}{\mathbf}
\newcommand{\rn}[1]{(\textsc{#1})}
\newcommand{\FIX}{\mathsf{FIX}}
\newcommand{\GFIX}{\mathsf{GFIX}}
\newcommand{\UNF}{\mathsf{UNF}}
\newcommand{\sfix}[1]{\ensuremath{#1^\dag}}
\newcommand{\sembr}[1]{\left\llbracket#1\right\rrbracket}
\newcommand{\ppi}[1]{\llparenthesis\, #1 \,\rrparenthesis}
\newcommand{\genbr}[1]{\{\!| #1 |\!\}}
\newcommand{\N}{\mathbb{N}}
\renewcommand{\O}{\ensuremath{\mathbf{O}}}
\newcommand{\CFP}{\ensuremath{\mathbf{CFP}}}
\newcommand{\strc}[1]{{#1}_{\bot!}}
\newcommand{\lcto}{\xrightarrow{\ms{l.c.}}}
\newcommand{\nto}{\Rightarrow}
\newcommand{\cel}[1]{\int #1} %
\newcommand{\algc}[2]{{#1}^{#2}}
\newcommand{\coalgc}[2]{{#1}_{#2}}
\newcommand{\lk}[3]{(#1, #2, #3)}%
\newcommand{\Trec}[2]{\rho #1.#2}
\DeclareMathOperator{\Cone}{Cone}
\DeclareMathOperator{\colim}{colim}
\newcommand\operatorupX[1]{\,\ThisStyle{\ensurestackMath{%
      #1\stackengine{-0pt}{\,}{\SavedStyle\!^{\mathord{\uparrow}}}{O}{l}{F}{T}{S}}}\!}
\newcommand\operatorup[1]{\!\mathop{\operatorupX{#1}}\@ifnextchar\{{\,}{\@ifnextchar[{\,}{\@ifnextchar({\,}{}}}}
\newcommand{\dirsqcup}{\operatorup{\bigsqcup}}
\newcommand{\dirsup}{\dirsqcup}
\newcommand{\defin}[1]{{\usefont{T1}{lmss}{b}{n}{#1}}}
\newcommand{\ie}{i.e.\@\xspace}
\newcommand{\eg}{e.g.\@\xspace}
\newcommand{\cf}{cf.\@\xspace}
\newcommand*{\etc}{%
  \@ifnextchar{.}%
  {etc}%
  {etc.\@\xspace}%
}
\newcommand{\Links}{\ensuremath{\mathbf{Links}}\xspace}
\newcommand{\repeattheorem}[1]{%
  \begingroup
  \renewcommand{\thetheorem}{\ref{#1}}%
  \expandafter\expandafter\expandafter\theorem
  \csname reptheorem@#1\endcsname
  \endtheorem
  \endgroup
}
\xdef\csname reptheorem@#1\endcsname{%
    \unexpanded\expandafter{\BODY}%
  }%
\unskip\label{#1}\endtheorem%
\let\endllncsproposition\endproposition%
\newcommand{\repeatproposition}[1]{%
  \begingroup%
  \renewcommand{\theproposition}{\ref{#1}}%
  \expandafter\expandafter\expandafter\proposition%
  \csname repproposition@#1\endcsname%
  \endproposition%
  \endgroup%
}
\xdef\csname repproposition@#1\endcsname{%
    \unexpanded\expandafter{\BODY}%
  }%
\unskip\label{#1}\endllncsproposition%
\newcommand{\repeatlemma}[1]{%
  \begingroup%
  \renewcommand{\thelemma}{\ref{#1}}%
  \expandafter\expandafter\expandafter\lemma%
  \csname replemma@#1\endcsname%
  \endlemma%
  \endgroup%
}
\xdef\csname replemma@#1\endcsname{%
    \unexpanded\expandafter{\BODY}%
  }%
\unskip\label{#1}\endllncslemma%
\begin{document}
\title{Parametrized Fixed Points on \O-Categories and Applications to Session Types}
\titlerunning{Parametrized Fixed Points on \O-Categories}
\author{Ryan Kavanagh\orcidID{0000-0001-9497-4276}}
\authorrunning{R. Kavanagh}
\institute{Computer Science Department, Carnegie Mellon University, Pittsburgh, PA 15213-3890, USA \email{rkavanagh@cs.cmu.edu} \url{https://rak.ac/}}
\maketitle              %
\begin{abstract}
  \O-categories~\cite{wand_1977:_fixed_point_const,smyth_plotkin_1982:_categ_theor_solut} generalize categories of domains to provide just the structure required to compute fixed points of locally continuous functors.
  Parametrized fixed points are of particular interest to denotational semantics and are often given by ``dagger operations''~\cite{bloom_esik_1993:_iterat_theor,bloom_esik_1996:_fixed_point_operat,bloom_esik_1995:_some_equat_laws_initial}.
  We generalize existing techniques to define a functorial dagger operation on locally continuous functors between O-categories.
  We show that this dagger operation satisfies the Conway identities~\cite{bloom_esik_1996:_fixed_point_operat}, a collection of identities used to axiomatize iteration theories.
  We study the behaviour of this dagger operation on natural transformations and consider applications to semantics of session-typed languages.

  \keywords{\O-categories \and fixed points \and dagger operations \and Conway identities.}
\end{abstract}

\section{Introduction}
\label{sec:introduction}

Recursive types are ubiquitous in functional languages.
For example, in Standard ML we can define the type of (unary) natural numbers as:
\begin{verbatim}
datatype nat = Zero | Succ of nat
\end{verbatim}
This declaration specifies that a \verb!nat! is either zero or the successor of some natural number.
Semantically, we can think of \verb!nat! as a domain $D$ satisfying the domain equation $D \cong (\mathtt{Zero} : \{\bot\}) \uplus (\mathtt{Succ} : D)$, where $\uplus$ forms the labelled disjoint union of domains.
Equivalently, we can think of $D$ as a fixed point of the functor $F_{\mathtt{nat}}(X) = (\mathtt{Zero} : \{\bot\}) \uplus (\mathtt{Succ} : X)$ on a category of domains.

Mutually-recursive data types give rise to a similar interpretation.
Consider, for example, the types of even and odd natural numbers:
\begin{verbatim}
datatype even = Zero | E of odd
     and odd  = O of even
\end{verbatim}
This declaration specifies that an even number is either zero or the successor of an odd number, and that an odd number is the successor of an even one.
The types $\mathtt{even}$ and $\mathtt{odd}$ respectively denote solutions $D_{\mathtt{e}}$ and $D_{\mathtt{o}}$ to the system of domain equations:
\[
  D_{\mathtt{e}} \cong (\mathtt{Zero} : \{\bot\}) \uplus (\mathtt{E} : D_{\mathtt{o}}) \quad \text{and} \quad D_{\mathtt{o}} \cong (\mathtt{O} : D_{\mathtt{e}}).
\]
These are solutions to the system of equations:
\begin{align}
  X_{\mathtt{e}} &\cong F_{\mathtt{even}}(X_{\mathtt{e}}, X_{\mathtt{o}})\label{eq:proposal:3}\\
  X_{\mathtt{o}} &\cong F_{\mathtt{odd}}(X_{\mathtt{e}}, X_{\mathtt{o}})\label{eq:proposal:4}
\end{align}
where $F_{\mathtt{even}}$ and $F_{\mathtt{odd}}$ are the functors $F_{\mathtt{even}}(X_{\mathtt{e}}, X_{\mathtt{o}}) = (\mathtt{Zero} : \{\bot\}) \uplus (\mathtt{E} : X_{\mathtt{o}})$ and $F_{\mathtt{odd}}(X_{\mathtt{e}}, X_{\mathtt{o}}) = (\mathtt{O} : X_{\mathtt{e}})$.
We can use Beki{\v{c}}'s rule~\cite[\S~2]{bekic_1984:_defin_operat_gener} to solve this system of equations.
To do so, we think of \cref{eq:proposal:3} as a family of equations parametrized by $X_{\mathtt{o}}$.
If we could solve for $X_{\mathtt{e}}$, then we would get a parametrized family of solutions $\sfix{F_{\mathtt{even}}}(X_{\mathtt{o}})$ such that:
\begin{equation}
  \label{eq:proposal:5}
  \sfix{F_{\mathtt{even}}}(X_{\mathtt{o}}) \cong F_{\mathtt{even}}(\sfix{F_{\mathtt{even}}}(X_{\mathtt{o}}), X_{\mathtt{o}})
\end{equation}
for all domains $X_{\mathtt{o}}$.
Substituting this for $X_{\mathtt{e}}$ in \cref{eq:proposal:4} gives the domain equation
\[
  X_{\mathtt{o}} \cong F_{\mathtt{odd}}(\sfix{F_{\mathtt{even}}}(X_{\mathtt{o}}), X_{\mathtt{o}}).
\]
Solving for $X_{\mathtt{o}}$ gives the solution $D_{\mathtt{o}}$.
Substituting $D_{\mathtt{o}}$ for $X_{\mathtt{o}}$ in \cref{eq:proposal:5}, we see that $D_{\mathtt{e}} = \sfix{F_{\mathtt{even}}}(D_{\mathtt{o}})$ is the other part of the solution.

The above example motivates techniques for solving parametrized domain equations.
These are well understood.
For example, given a suitable functor $F : \mb{D} \times \mb{E} \to \mb{E}$ on suitable categories of domains, \cite[Proposition~5.2.7]{abramsky_jung_1995:_domain_theor} gives a recipe for constructing a functor $\sfix{F} : \mb{D} \to \mb{E}$ such that for all objects $D$ of $\mb{D}$, $\sfix{F}D \cong F(D, \sfix{F}D)$.
Is the mapping $F \mapsto \sfix{F}$ functorial?
Semantically, substitution is typically interpreted as composition~\cite[\S~3.4]{crole_1993:_categ_types}.
If the interpretations of recursive types are to respect substitution, then the mapping $F \mapsto \sfix{F}$ must be natural in $\mb{D}$.
Is it?
What other properties does it satisfy?

Families of parametric fixed points arise elsewhere in mathematics.
An external \defin{dagger operation}~\cites[Definition~2.6]{bloom_esik_1995:_some_equat_laws_initial}[7]{bloom_esik_1996:_fixed_point_operat} on a cartesian closed category $\mb{C}$ is a family $\dagger_{A,B} : \mb{C}(A \times B, B) \to \mb{C}(A, B)$ of set theoretic functions for each pair of objects $A$ and $B$ in $\mb{C}$.
Of particular interest are dagger operations that satisfy the (cartesian) Conway identities.
These identities imply many identities~\cite[\S~3.3]{bloom_esik_1996:_fixed_point_operat} useful for semantic reasoning, such as Beki{\v{c}}'s rule.
They also axiomatize a decidable theory~\cite{bernatsky_esik_1998:_seman_flowc_progr}, and dagger operations that satisfy them are closely related to the trace operator~\cites{joyal_1996:_traced_monoid_categ}[Theorem~7.1]{hasegawa_1999:_recur_cyclic_sharin}[281]{benton_hyland_2003:_traced_premon_categ}.
Does the above dagger operation satisfy the Conway identities?

\O-categories~\cite{smyth_plotkin_1982:_categ_theor_solut} generalize categories of domains to provide just the structure required to compute fixed points of functors.
In this paper, we present a locally continuous dagger operation acting on locally continuous functors between \O-categories (\cref{sec:funct-fixed-points}).
In \cref{sec:canon-fixed-points}, we show that these parametrized families of solutions are canonical, \ie, that they determine initial algebras and terminal coalgebras.
We show that the dagger operation satisfies the Conway identities and the power identities up to isomorphism in \cref{sec:conway-identities}.
As an application, in \cref{sec:appl-semant} we see that properties of our dagger operation are essential for defining and reasoning about semantics of session-typed languages with recursion.

\section{Background and Notation}
\label{sec:background-notation}

We review some standard definitions and fix our notation.
For more details, see \cites{abramsky_jung_1995:_domain_theor}[Chapter~10]{gunter_1992:_seman_progr_languag}{smyth_plotkin_1982:_categ_theor_solut}.

We write $\dirsup A$ for the \defin{directed supremum} of a directed set $A$.

We use the arrow $\to$ when describing single morphisms, and $\nto$ for natural transformations or families of morphisms.
In 2-categories, we use $\to$ and $\ast$ for horizontal morphisms and composition, and $\nto$ and $\circ$ for vertical morphisms and composition.
Given two objects $A$ and $B$ of a category $\mb{C}$, we write $\mb{C}(A, B)$ for the \defin{homset} of morphisms $A \to B$ when $\mb{C}$ is locally small.
We write $[A \to B]$ for the \defin{internal hom} in $\mb{C}$ if it exists.
We write $\bot_{\mb{C}}$ or just $\bot$ for the \defin{initial object} of a category $\mb{C}$.
We also write $\bot$ for the unique cone $\bot_{\mb{C}} \nto \ms{id}_{\mb{C}}$ witnessing the initiality of $\bot_{\mb{C}}$.
If $\mb{C}$ has a terminal object isomorphic to its initial object, we call the initial object the \defin{zero object} $0_{\mb{C}}$.
$\mb{C}$ has \defin{zero morphisms} if for all objects $A$ and $D$ there exists a fixed morphism $0_{AD} : A \to D$, and if this family of morphisms satisfies $0_{BD} \circ f = 0_{AD} = g \circ 0_{AC}$ for all morphisms $f : A \to B$ and $g : C \to D$.
$\mb{C}$ has zero morphisms whenever it has a zero object: $0_{AB} = A \to 0 \to B$.

Given functors $F,G : \mb{C} \to \mb{D}$ and $H, I : \mb{D} \to \mb{E}$, the \defin{horizontal composition} $\eta \ast \epsilon : HF \nto IG$ of natural transformations $\eta : H \nto I$ and $\epsilon : F \nto G$ is given by the equal natural transformations $I\epsilon \circ \eta F = \eta G \circ H\epsilon$.
Given a morphism $f : K \to L$ in $\mb{C}$, we abuse notation and write $\eta \ast f : FK \to GL$ for the naturality square $FK \xrightarrow{Ff} FL \xrightarrow{\eta_L} GL = FK\xrightarrow{\epsilon_K} GK \xrightarrow{Gf} GK$.

An \defin{\O-category}~\cites[Definition~5]{smyth_plotkin_1982:_categ_theor_solut} is a category $\mb{K}$ where every hom-set $\mb{K}(C, D)$ is a dcpo, and where composition of morphisms is continuous with respect to the partial ordering on morphisms.
A functor $F : \mb{D} \to \mb{E}$ between $\mb{O}$-categories is \defin{locally continuous} if the maps $f \mapsto F(f) : \mb{D}(D_1, D_2) \to \mb{D}(F(D_1), F(D_2))$ are continuous for all objects $D_1, D_2$ of $\mb{D}$.
We write $[\mb{D} \lcto \mb{E}]$ for the category of locally continuous functors $\mb{D} \to \mb{E}$, with natural transformations as morphisms.
Examples of $\mb{O}$-categories include $\mb{DCPO}$ and functor categories $[\mb{C} \to \mb{D}]$ whenever $\mb{D}$ is an $\mb{O}$-category.

Small \O-categories form a 2-cartesian closed category {\O}.
Horizontal morphisms are locally continuous functors and vertical morphisms are natural transformations.
Many interesting \O-categories, \eg, $\mb{DCPO}$, are not small.
We can use a hierarchy of universes~\cite[\S~3]{schubert_1972:_categ} to make categories of interest small.

An \defin{embedding-projection pair} (e-p-pair)~\cite[Definition~6]{smyth_plotkin_1982:_categ_theor_solut} $(e, p)$  is a pair of morphisms $e : D \to E$ and $p : E \to D$ such that $p \circ e = \ms{id}_D$ and $e \circ p \sqsubseteq \ms{id}_E$.
We call $e$ an \defin{embedding} and $p$ a \defin{projection}.
Given an embedding $e : D \to E$, we write $e^p : E \to D$ for its associated projection.
Given a projection $p : E \to D$, we write $p^e : D \to E$ for its associated embedding.
When $\mb{K}$ is an \O-category, we write $\mb{K}^e$ for the subcategory of $\mb{K}$ whose morphisms are embeddings.

An \defin{$\omega$-chain} in $\mb{K}$ is a diagram $J : \omega \to \mb{K}$.
A cocone $\kappa : J \nto A$ in $\mb{K}^e$ is an \defin{\O-colimit}~\cite[Definition~7]{smyth_plotkin_1982:_categ_theor_solut} if $(\kappa_n \circ \kappa_n^p)_n$ is an ascending chain in $\mb{K}(A, A)$ and $\dirsup_{n \in \N} \kappa_n \circ \kappa_n^p = \ms{id}_A$.
$\mb{K}$ is \defin{\O-cocomplete} if every $\omega$-chain in $\mb{K}^e$ has an \O-colimit~in~$\mb{K}$.
Our interest in $\mb{O}$-colimits is due to \cref{prop:33}, which appears as \cite[Propositions~A and~D]{smyth_plotkin_1982:_categ_theor_solut} and as part of the proof of \cite[Proposition~A]{smyth_plotkin_1982:_categ_theor_solut}.
Parts can also be found in the proof of \cite[Theorem~10.4]{gunter_1992:_seman_progr_languag} or specialized to $\mb{DCPO}$ as \cite[Theorem~3.3.7]{abramsky_jung_1995:_domain_theor}.

\begin{restatable}[{\cite[Propositions~A and~D]{smyth_plotkin_1982:_categ_theor_solut}}]{proposition}{propDD}
  \label{prop:33}
  Let $\mb{K}$ be an \O-category, $\Phi$ an $\omega$-chain in $\mb{K}^e$, and $\alpha : \Phi \nto A$ a cocone in $\mb{K}$.
  \begin{enumerate}
  \item If $\beta : \Phi \nto B$ is a cocone in $\mb{K}^e$, then $(\alpha_n \circ \beta_n^p)_{n \in \N}$ is an ascending chain in $\mb{K}(B, A)$ and the morphism $\theta = \dirsup_{n \in \N} \alpha_n \circ \beta_n^p$ is mediating from $\beta$ to $\alpha$.
  \item If $\beta : \Phi \nto B$ is an \O-colimit, then $\theta$ is an embedding.
  \item If $\alpha$ is an \O-colimit, then $\alpha$ is colimiting in both $\mb{K}$ and $\mb{K}^e$.
  \item If $\alpha$ is colimiting in $\mb{K}$, then $\alpha$ lies in $\mb{K}^e$ and is an \O-colimit.
  \end{enumerate}
\end{restatable}

\section{Functoriality of Fixed Points}
\label{sec:funct-fixed-points}

We show that constructing fixed points of locally continuous functors is a functorial operation.
Typically, the ``canonical'' fixed point of a locally continuous functor $F$ on an \O-category is given by the colimit of the $\omega$-chain $\bot \to F\bot \to F^2\bot \to \dotsb$.
Other fixed points can be constructed using a different ``first link'', \ie, by taking the colimit of a chain $K \to FK \to F^2K \to \dotsb$ generated by a link $k : K \to FK$.

Fix an \O-category $\mb{K}$.
Links form a category $\Links_{\mb{K}}$ where
\begin{itemize}
\item objects are triples $(K, k, F)$ called ``links'', where $K$ is an object of $\mb{K}$, $F : \mb{K} \to \mb{K}$ is locally continuous, and $k : K \to FK$ is an embedding;
\item morphisms $\lk{K}{k}{F} \to \lk{L}{l}{G}$ are pairs $(f, \eta)$ where $f : K \to L$ is a morphism of $\mb{K}$, $\eta : F \nto G$ is a natural transformation in $\mb{K}$, and $f$ and $\eta$ satisfy $l \circ f = (\eta \ast f) \circ k : K \to GL$;
\item composition is given component-wise: $(g, \rho) \circ (f, \eta) = (g \circ f, \rho \circ \eta)$.
\end{itemize}
There exists a locally continuous functor $\Omega : \Links_{\mb{K}} \to [ \omega \to \mb{K}^e ]$ that, given a link $\lk{K}{k}{F}$, generates the $\omega$-chain ${\Omega\lk{K}{k}{F} : K \xrightarrow{k} FK \xrightarrow{Fk} F^2K \xrightarrow{F^2k} \dotsb}$.
The action of $\Omega$ on morphisms uses the horizontal iteration of natural transformations.
Consider functors $H, G : \mb{C} \to \mb{C}$ and a natural transformation $\eta : H \nto G$.
We define the family of \defin{horizontal iterates} $\eta^{(i)} : H^i \nto G^i$, $i \in \N$, by recursion on $i$.
When $i = 0$, $H^0 = G^0 = \ms{id}_{\mb{C}}$ and we define $\eta^{(0)}$ to be the identity natural transformation on $\ms{id}_{\mb{C}}$.
Given $\eta^{(i)}$, we set $\eta^{(i+1)} = \eta \ast \eta^{(i)}$.

We define the functor $\Omega : \Links_{\mb{K}} \to [ \omega \to \mb{K} ]$.
The action of $\Omega\lk{K}{k}{F}$ on morphisms $n \to n + k$ is defined by induction on $k$.
\begin{align}
  \Omega\lk{K}{k}{F}(n) &= F^nK\label{eq:main:1}\\
  \Omega\lk{K}{k}{F}(n \to n) &= \ms{id}_{F^{n} K}\label{eq:main:4}\\
  \Omega\lk{K}{k}{F}(n \to n + k + 1) &= F^{n + k}k \circ \Omega\lk{K}{k}{F}(n \to n + k)\label{eq:main:2}\\
  \Omega(f : K \to L, \eta : F \nto G)_n &= \eta^{(n)} \ast f : F^nK \to G^nL\label{eq:main:3}
\end{align}

\Cref{prop:main:1} generalizes the functor $S : [\mb{C} \to \mb{C}] \to [\mb{\omega} \to \mb{C}]$ of \cites[\S~3]{lehmann_smyth_1977:_data_types}[Lemma~2 (p.~118)]{lehmann_smyth_1981:_algeb_specif_data_types} to form chains with an arbitrary initial link in a locally continuous manner.

\begin{restatable}{proposition}{propMainB}
  \label{prop:main:1}
  \Cref{eq:main:1,eq:main:4,eq:main:2,eq:main:3} define a locally continuous functor $\Omega : \Links_{\mb{K}} \to [ \omega \to \mb{K} ]$.
  For all links $\lk{K}{k}{F}$, $\Omega\lk{K}{k}{F} : \omega \to \mb{K}^e$.
  The natural transformation $\Omega(f, \eta)$ lies in $\mb{K}^e$ whenever $f$ and $\eta$ do.
\end{restatable}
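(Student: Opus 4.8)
The plan is to verify in turn that the equations give (i)~a well-defined $\omega$-chain for each link, (ii)~a well-defined natural transformation for each morphism, (iii)~a functor, and (iv)~a locally continuous one; the two embedding claims then follow almost for free. Throughout I will use two standard facts about e-p-pairs: locally continuous (hence locally monotone) functors preserve embeddings, since $Fp \circ Fe = F(\ms{id}) = \ms{id}$ and $Fe \circ Fp = F(e \circ p) \sqsubseteq F(\ms{id}) = \ms{id}$; and embeddings are closed under composition. Together these show that any morphism assembled from $k$, $f$, the components of $\eta$, and applications of $F$ and $G$ by composition is again an embedding.

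For the object part, a functor $\omega \to \mb{K}$ is determined by its values on objects and on the successor arrows $n \to n+1$, all other arrows being forced as composites; so \cref{eq:main:1,eq:main:4,eq:main:2} define a genuine diagram, whose successor arrow $n \to n+1$ is $F^n k$ (take the gap to be $0$ in \cref{eq:main:2} and use \cref{eq:main:4}). Since $F^n k$ is an embedding, the diagram lands in $\mb{K}^e$, giving $\Omega\lk{K}{k}{F} : \omega \to \mb{K}^e$.

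The crux is showing that $\Omega(f,\eta)$ of \cref{eq:main:3} is a natural transformation $\Omega\lk{K}{k}{F} \nto \Omega\lk{L}{l}{G}$, for which it suffices to check the squares over the successor arrows, i.e.\ $G^n l \circ (\eta^{(n)} \ast f) = (\eta^{(n+1)} \ast f) \circ F^n k$ for all $n$. Writing $g_n = \eta^{(n)} \ast f$, the recursive definition of the horizontal iterates gives $g_{n+1} = \eta \ast g_n$ (this uses the identity $(\eta \ast \eta^{(n)}) \ast f = \eta \ast (\eta^{(n)} \ast f)$, both sides being $\eta_{G^n L} \circ F(\eta^{(n)} \ast f)$ by unwinding $\ast$). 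I would then induct on $n$. The base case $n = 0$ is exactly the link condition $l \circ f = (\eta \ast f) \circ k$. For the step, apply the operation $\eta \ast (-)$ to both sides of the inductive hypothesis $G^n l \circ g_n = g_{n+1} \circ F^n k$: expanding the left-hand morphism through $\eta \ast h = G h \circ \eta_{\mathrm{dom}(h)}$ (and using $G(g_n) \circ \eta_{F^n K} = \eta \ast g_n = g_{n+1}$) and the right-hand morphism through $\eta \ast h = \eta_{\mathrm{cod}(h)} \circ F h$ collapses the two sides to $G^{n+1} l \circ g_{n+1}$ and $g_{n+2} \circ F^{n+1} k$ respectively, which is the square at $n+1$. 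This is the step I expect to be the \emph{main obstacle}, since it is where the link condition is genuinely consumed and where the interplay between the ``$\ast$-with-a-morphism'' notation and the naturality of $\eta$ must be handled carefully.

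Functoriality and local continuity are then routine inductions built from the same ingredients. Preservation of identities follows from $(\ms{id}_F)^{(n)} = \ms{id}_{F^n}$ and $\ms{id}_{F^n} \ast \ms{id}_K = \ms{id}_{F^n K}$. For composition I would first prove $(\rho \circ \eta)^{(n)} = \rho^{(n)} \circ \eta^{(n)}$ by induction using the interchange law, and then check $(\rho^{(n)} \circ \eta^{(n)}) \ast (g \circ f) = (\rho^{(n)} \ast g) \circ (\eta^{(n)} \ast f)$ by expanding both sides and invoking naturality of $\eta^{(n)}$ at $g$. For local continuity, I would use that the hom-sets of $\Links_{\mb{K}}$ carry the componentwise order and that $\Omega(f,\eta)_n = \eta^{(n)}_L \circ F^n f$ is assembled from composition (continuous by the \O-category axiom), from $f \mapsto F^n f$ (continuous since $F$ is locally continuous), and from $\eta \mapsto \eta^{(n)}$ (continuous by induction, as $\ast$ is built from composition and from applying $F$ and $G$), with evaluation at $L$ continuous because natural transformations are ordered pointwise. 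Finally, the last claim is immediate from the two standard facts above: when $f$ and the components of $\eta$ are embeddings, each $\eta^{(n)}_L$ is an embedding and so is $F^n f$, whence $\Omega(f,\eta)_n = \eta^{(n)}_L \circ F^n f$ is an embedding for every $n$, i.e.\ $\Omega(f,\eta)$ lies in $\mb{K}^e$.
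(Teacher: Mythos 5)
Your proposal is correct, and its overall decomposition --- well-definedness on objects, naturality of $\Omega(f,\eta)$, functoriality via $(\rho\circ\eta)^{(n)}=\rho^{(n)}\circ\eta^{(n)}$ and the interchange law, local continuity from continuity of composition and of $\eta\mapsto\eta^{(n)}$, and the embedding claims from preservation of embeddings by locally continuous functors --- matches the paper's proof point for point. The one place you genuinely reorganize the argument is the step you correctly identify as the crux, the naturality of $\Omega(f,\eta)$. The paper fixes $n$ and inducts on the length $m$ of an arbitrary arrow $n\to n+m$, splitting the square at $m+1$ into a naturality square for $\eta^{(n+m)}$ pasted with the link condition $l\circ f=(\eta\ast f)\circ k$, so the link condition is consumed at every inductive step. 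You instead reduce to the successor squares (legitimate, since $\omega$ is thin and freely generated by the successor arrows, and naturality squares paste), consume the link condition once at $n=0$, and propagate by applying $\eta\ast({-})$ to the inductive hypothesis, expanding one side as $Gh\circ\eta_{\mathrm{dom}(h)}$ and the other as $\eta_{\mathrm{cod}(h)}\circ Fh$; the associativity $(\eta\ast\eta^{(n)})\ast f=\eta\ast(\eta^{(n)}\ast f)$ you invoke does hold by naturality of $\eta$, so the step is sound. The two inductions are interchangeable in difficulty; yours isolates the use of the link condition to a single base case, while the paper's makes the role of naturality of the iterated transformation more explicit. The same remark applies to the object part, where you appeal to freeness of $\omega$ rather than verifying composition by the paper's nested induction; this requires only the (immediate) observation that \cref{eq:main:2} unrolls to the composite of successor arrows.
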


\subsection{General Fixed Points}
\label{sec:general-fixed-points}

We can use $\Omega$ to show that taking fixed points of locally continuous functors is itself functorial and locally continuous.
To do so, we use an $\omega$-colimit functor (\cref{prop:main:10}).
Let $[ \omega \to \mb{K}^e \hookrightarrow \mb{K} ]$ be the subcategory of $[ \omega \to \mb{K} ]$ whose objects are functors $\omega \to \mb{K}^e$ and whose morphisms are natural transformations in $\mb{K}$.

\begin{restatable}{proposition}{propMainBA}
  \label{prop:main:10} %
  Let $\mb{K}$ be an \O-cocomplete \O-category.
  A choice of \O-colimit in $\mb{K}$ for each diagram $\omega \to \mb{K}^e \hookrightarrow \mb{K}$ defines the action on objects of a locally continuous functor $\colim_\omega : [ \omega \to \mb{K}^e \hookrightarrow \mb{K} ] \to \mb{K}^e$.
  Where $\phi : \Phi \nto \colim_\omega \Phi$ and $\gamma : \Gamma \nto \colim_\omega \Gamma$ are the chosen \O-colimits in $\mb{K}$, its action on morphisms $\eta : \Phi \nto \Gamma$ is given by
  \[
    \colim_\omega(\eta : \Phi \nto \Gamma) = \dirsup_{n \in \N} \gamma_n \circ \eta_n \circ \phi_n^p.
  \]
\end{restatable}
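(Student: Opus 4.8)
The plan is to reduce every claim to the universal property of \O-colimits recorded in \cref{prop:33}. Write $\phi : \Phi \nto \colim_\omega\Phi$ and $\gamma : \Gamma \nto \colim_\omega\Gamma$ for the chosen \O-colimits. Given a morphism $\eta : \Phi \nto \Gamma$, the components $\gamma_n \circ \eta_n$ form a cocone $\gamma\eta : \Phi \nto \colim_\omega\Gamma$ in $\mb{K}$ (the vertical composite of $\eta$ with $\gamma$, read as a natural transformation into the constant diagram). Since $\phi$, being an \O-colimit, is a cocone in $\mb{K}^e$, \cref{prop:33}(1) applies with $\beta = \phi$ and $\alpha = \gamma\eta$: the family $(\gamma_n \circ \eta_n \circ \phi_n^p)_{n \in \N}$ is an ascending chain, so its supremum $\colim_\omega(\eta)$ exists, and it is the mediating morphism from $\phi$ to $\gamma\eta$. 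Because $\phi$ is colimiting (\cref{prop:33}(3)), this mediating morphism is unique, so $\colim_\omega(\eta)$ is \emph{characterized} by the equations $\colim_\omega(\eta) \circ \phi_n = \gamma_n \circ \eta_n$ for all $n$. This characterization drives the remaining arguments.

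Functoriality then follows from uniqueness of mediating morphisms. For identities, $\ms{id}_{\colim_\omega\Phi}$ satisfies $\ms{id} \circ \phi_n = \phi_n$, so it is the mediating morphism from $\phi$ to itself and hence equals $\colim_\omega(\ms{id}_\Phi)$. For a composite $\Phi \xrightarrow{\eta} \Gamma \xrightarrow{\zeta} \Psi$ with chosen \O-colimit $\psi$ for $\Psi$, I would precompose $\colim_\omega(\zeta) \circ \colim_\omega(\eta)$ with $\phi_n$ and apply the characterizing equations twice, obtaining $\psi_n \circ \zeta_n \circ \eta_n = \psi_n \circ (\zeta \circ \eta)_n$; uniqueness then forces $\colim_\omega(\zeta) \circ \colim_\omega(\eta) = \colim_\omega(\zeta \circ \eta)$.

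For local continuity, recall that directed suprema in the hom-dcpos of $[\omega \to \mb{K}^e \hookrightarrow \mb{K}]$ are computed componentwise, so a directed family of morphisms $\Phi \nto \Gamma$ has a supremum whose $n$-th component is the supremum of the family's $n$-th components. Because composition in $\mb{K}$ is continuous in each argument, the operator $\gamma_n \circ (-) \circ \phi_n^p$ is continuous and therefore commutes with the supremum of the family at level $n$. Substituting this into the defining formula exhibits $\colim_\omega$ of the supremum as a double directed supremum of the morphisms $\gamma_n \circ \eta_n \circ \phi_n^p$, indexed both by $n \in \N$ and by $\eta$ ranging over the family. The main obstacle is to interchange these two directed suprema, so as to recognize the result as the supremum over the family of the individual $\colim_\omega(\eta)$. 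I would justify the interchange by noting that this doubly indexed family is ascending in $n$ (by the first paragraph) and directed along the family (by monotonicity of composition), so that both iterated suprema coincide with the supremum over the directed product index set, a standard property of dcpos.

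It remains to check that $\colim_\omega$ takes values in $\mb{K}^e$. The object assignment is immediate, each $\colim_\omega\Phi$ being an object equipped with a chosen \O-colimit. For morphisms I would invoke \cref{prop:33}(2): when the cocone $\gamma\eta$ lies in $\mb{K}^e$, the mediating morphism $\colim_\omega(\eta)$ is an embedding. Since the $\gamma_n$ are already embeddings, $\gamma\eta$ lies in $\mb{K}^e$ exactly when each $\eta_n$ is an embedding, and this is the one point demanding care: for general $\eta$ the image is only guaranteed to lie in $\mb{K}$, so I would confirm that the natural transformations actually fed to $\colim_\omega$ are embedding-valued before asserting the codomain is $\mb{K}^e$ rather than merely $\mb{K}$.
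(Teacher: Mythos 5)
Your argument is essentially the paper's own: form the cocone $\gamma \circ \eta$ on $\Phi$, obtain $\colim_\omega\eta$ as the mediating morphism out of the colimiting cocone $\phi$ via \cref{prop:33}, deduce functoriality from uniqueness of mediating morphisms, and establish local continuity by interchanging the two directed suprema (componentwise suprema of natural transformations together with continuity of composition). All of this matches the paper's proof step for step.

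The reservation you record at the end is not a defect of your write-up but a genuine soft spot that the paper's own proof glosses over. The paper asserts that $\colim_\omega\eta$ is an embedding ``because $\phi$ is an \O-colimit, again by \cref{prop:33}''; but the proof of that part of \cref{prop:33} manipulates $\alpha_n^p$, so it tacitly assumes the target cocone consists of embeddings. For a morphism $\eta$ of $[\omega \to \mb{K}^e \hookrightarrow \mb{K}]$ the components $\eta_n$ are arbitrary morphisms of $\mb{K}$, so the components $\gamma_n \circ \eta_n$ need not be embeddings. Indeed, taking $\Phi = \Gamma$ to be a constant chain of identities on a nontrivial object and $\eta$ the zero natural transformation yields $\colim_\omega\eta = 0$, which is not an embedding. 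So your caution is exactly right: on a general morphism the construction lands only in $\mb{K}$, and the stated codomain $\mb{K}^e$ is justified on objects, and on morphisms only when $\eta$ is embedding-valued.
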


We assume below that, whenever $\mb{K}$ is \O-cocomplete, a suitable choice of \O-colimits has been made so that the functor $\colim_\omega$ given by \cref{prop:main:10}~exists.

\begin{restatable}{proposition}{propMainI}%
  \label{prop:main:8}
  Let $\mb{K}$ be an \O-cocomplete \O-category.
  There exists a locally continuous functor $\GFIX = {\colim_\omega} \circ \Omega : \Links_{\mb{K}} \to \mb{K}^e$.
  Its action on morphisms $(f, \eta) : \lk{K}{k}{F} \to \lk{L}{l}{G}$ is given by
  \[
    \GFIX(f, \eta) = \dirsup_{n \in \N} \gamma_n \circ \Omega(f, \eta)_n \circ \phi^p_n
  \]
  where $\phi : \Omega\lk{K}{k}{F} \nto \GFIX\lk{K}{k}{F}$ and $\gamma : \Omega\lk{L}{l}{G} \nto \GFIX\lk{L}{l}{G}$ are the chosen \O-colimits in $\mb{K}$.
\end{restatable}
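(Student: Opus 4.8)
The plan is to recognize $\GFIX$ as the composite of two functors we have already shown to be locally continuous, so that the statement follows essentially by composition once the types line up. First I would check that $\Omega$ corestricts to a functor $\Links_{\mb{K}} \to [\omega \to \mb{K}^e \hookrightarrow \mb{K}]$. \Cref{prop:main:1} gives exactly this: it records that $\Omega\lk{K}{k}{F}$ is a diagram $\omega \to \mb{K}^e$, so every object of $\Links_{\mb{K}}$ is sent to an object of $[\omega \to \mb{K}^e \hookrightarrow \mb{K}]$; and since the morphisms of that subcategory are precisely the natural transformations in $\mb{K}$, every $\Omega(f,\eta)$ is already a morphism there. Because $[\omega \to \mb{K}^e \hookrightarrow \mb{K}]$ has the same hom-sets (and the same order on them) as $[\omega \to \mb{K}]$, the corestricted $\Omega$ acts on hom-sets by the same maps as before, and is therefore still locally continuous.

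Next I would compose with $\colim_\omega : [\omega \to \mb{K}^e \hookrightarrow \mb{K}] \to \mb{K}^e$ from \cref{prop:main:10}, yielding $\GFIX = \colim_\omega \circ \Omega : \Links_{\mb{K}} \to \mb{K}^e$. A composite of functors is a functor, and it lands in $\mb{K}^e$ because $\colim_\omega$ does. To see that $\GFIX$ is locally continuous, I would invoke the general fact that a composite of locally continuous functors is locally continuous: the action of $\GFIX$ on a hom-dcpo factors as the action of $\Omega$ followed by that of $\colim_\omega$, and a composite of continuous maps between dcpos is continuous.

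It then remains to read off the formula for the action on morphisms. Unfolding $\GFIX(f,\eta) = \colim_\omega(\Omega(f,\eta))$ and applying the formula of \cref{prop:main:10} to the natural transformation $\Omega(f,\eta) : \Omega\lk{K}{k}{F} \nto \Omega\lk{L}{l}{G}$ gives $\dirsup_{n \in \N} \gamma_n \circ \Omega(f,\eta)_n \circ \phi_n^p$, where $\phi$ and $\gamma$ are the chosen \O-colimits of the source and target chains. Since $\colim_\omega(\Omega\lk{K}{k}{F}) = \GFIX\lk{K}{k}{F}$ and likewise for $\lk{L}{l}{G}$, these cocones are exactly the $\phi : \Omega\lk{K}{k}{F} \nto \GFIX\lk{K}{k}{F}$ and $\gamma : \Omega\lk{L}{l}{G} \nto \GFIX\lk{L}{l}{G}$ named in the statement, which produces the displayed equation.

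I expect essentially no obstacle here: the entire content is that the two preceding propositions compose. The only point deserving care is the typing check that $\Omega$ genuinely lands in $[\omega \to \mb{K}^e \hookrightarrow \mb{K}]$ rather than merely in $[\omega \to \mb{K}]$, since otherwise the composite with $\colim_\omega$ would not even be defined. This is precisely the extra information supplied by \cref{prop:main:1}, namely that each $\Omega\lk{K}{k}{F}$ is valued in $\mb{K}^e$.
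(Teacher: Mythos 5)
Your proposal is correct and follows the same route as the paper: the paper's proof likewise observes that $\Omega$ is locally continuous by \cref{prop:main:1}, that $\colim_\omega$ is locally continuous by \cref{prop:main:10}, that locally continuous functors compose, and that the morphism formula is just \cref{prop:main:10} applied to $\Omega(f,\eta)$. Your explicit check that $\Omega$ corestricts into $[\omega \to \mb{K}^e \hookrightarrow \mb{K}]$ is the only detail the paper compresses into ``the composition is clearly well defined,'' and you handle it correctly.
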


To show that this indeed gives fixed points, we define the ``unfolding'' functor:

\begin{restatable}{proposition}{propMainC}
  \label{prop:main:2}
  Let $\mb{K}$ be an \O-cocomplete \O-category.
  The following defines a locally continuous functor $\UNF : \Links_{\mb{K}} \to \mb{K}^e$.
  \begin{itemize}
  \item On objects: $\UNF\lk{K}{k}{F} = F(\GFIX\lk{K}{k}{F})$,
  \item on morphisms: $\UNF(f, \eta) = \dirsup_{n \in \N} G\gamma_n \circ \Omega(f,\eta)_{n+1} \circ F\phi_n^p  : \UNF\lk{K}{k}{F} \to \UNF\lk{L}{l}{G}$, where $\phi : \Omega\lk{K}{k}{F} \nto \GFIX\lk{K}{k}{F}$ and $\gamma : \Omega\lk{L}{l}{G} \nto \GFIX\lk{L}{l}{G}$ are the chosen \O-colimits in $\mb{K}$.
  \end{itemize}
\end{restatable}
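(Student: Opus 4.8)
The plan is to recognize $\UNF$ as the colimit construction of \cref{prop:main:10} applied, link by link, to the \emph{once-shifted} generated chain, with the chosen $\mathbf{O}$-colimits transported through $F$. Fix a link $\lk{K}{k}{F}$ with chosen $\mathbf{O}$-colimit $\phi : \Omega\lk{K}{k}{F} \nto \GFIX\lk{K}{k}{F}$. First I would record two standard consequences of local continuity. Since $F$ preserves embedding–projection pairs, $F(\phi_n^p) = (F\phi_n)^p$ and $F\phi$ is a cocone in $\mathbf{K}^e$; and $F$ preserves $\mathbf{O}$-colimits, because the chain $(F\phi_n \circ F\phi_n^p)_n$ is ascending and
\[
  \dirsup_{n} F\phi_n \circ (F\phi_n)^p = \dirsup_n F(\phi_n \circ \phi_n^p) = F\bigl(\dirsup_n \phi_n \circ \phi_n^p\bigr) = F(\ms{id}) = \ms{id}.
\]
Because $\Omega\lk{K}{k}{F}(n) = F^nK$, applying $F$ objectwise turns $\Omega\lk{K}{k}{F}$ into the once-shifted chain $FK \to F^2K \to \dotsb$, so $F\phi : F\Omega\lk{K}{k}{F} \nto F\GFIX\lk{K}{k}{F} = \UNF\lk{K}{k}{F}$ is an $\mathbf{O}$-colimit of that shifted chain. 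The same holds for $G\gamma$ over the $\lk{L}{l}{G}$-link.

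With this in hand, well-definedness is immediate from \cref{prop:33}. For a morphism $(f,\eta) : \lk{K}{k}{F} \to \lk{L}{l}{G}$, set $\alpha_n = G\gamma_n \circ \Omega(f,\eta)_{n+1}$; naturality of $\Omega(f,\eta)$ (\cref{eq:main:3}) together with the cocone law for $G\gamma$ shows $\alpha$ is a cocone from the shifted chain into $\UNF\lk{L}{l}{G}$. Taking $\beta = F\phi$, \cref{prop:33} shows that $(\alpha_n \circ \beta_n^p)_n = (G\gamma_n \circ \Omega(f,\eta)_{n+1} \circ F\phi_n^p)_n$ is an ascending chain whose supremum $\UNF(f,\eta)$ is the unique mediating morphism from $F\phi$ to $\alpha$; that is, $\UNF(f,\eta)$ is the unique morphism satisfying $\UNF(f,\eta) \circ F\phi_n = G\gamma_n \circ \Omega(f,\eta)_{n+1}$ for all $n$. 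When $f$ and $\eta$ are embeddings, $\Omega(f,\eta)$ lies in $\mathbf{K}^e$ by \cref{prop:main:1}, so $\alpha$ is a cocone in $\mathbf{K}^e$ and \cref{prop:33} makes $\UNF(f,\eta)$ an embedding, exactly as for $\GFIX$.

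Functoriality then drops out of this universal characterization rather than from the explicit supremum. Preservation of identities is clear, since $\Omega(\ms{id},\ms{id})_{n+1} = \ms{id}$ forces the mediating morphism to be $\ms{id}$. For composition, given $(g,\rho)$ after $(f,\eta)$ with colimit $\delta$ over the third link, I would compose the two defining equations and use functoriality of $\Omega$, so that $\Omega(g,\rho)_{n+1} \circ \Omega(f,\eta)_{n+1} = \Omega((g,\rho)\circ(f,\eta))_{n+1}$, to check that $\UNF(g,\rho) \circ \UNF(f,\eta)$ satisfies the defining equation of $\UNF((g,\rho)\circ(f,\eta))$; uniqueness of the mediating morphism then gives the identity.

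Finally, local continuity is proved exactly as for $\colim_\omega$ in \cref{prop:main:10}: the assignment $(f,\eta) \mapsto \UNF(f,\eta)$ is a supremum over $n$ of maps that are continuous in $(f,\eta)$ (by local continuity of $\Omega$ and of composition in $\mathbf{K}$), and one interchanges the two directed suprema. I expect the only real work to lie in the two transport facts of the first paragraph—chiefly that $F\phi$ is genuinely an $\mathbf{O}$-colimit of the shifted chain—and in the supremum-interchange of the local-continuity step; the functoriality is essentially free once $\UNF(f,\eta)$ is pinned down by its universal property.
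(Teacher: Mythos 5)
Your proposal follows the paper's own route: you recognize $\UNF(f,\eta)$ as the mediating morphism supplied by \cref{prop:33} from the \O-colimit $F\phi$ of the once-shifted chain to the cocone $(G\gamma_n \circ \Omega(f,\eta)_{n+1})_n$, and you establish local continuity by the same interchange of directed suprema. Your explicit functoriality check via uniqueness of mediating morphisms is a welcome addition that the paper leaves implicit, and your direct verification that $F$ carries \O-colimits to \O-colimits matches the fact the paper simply cites.

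The one genuine divergence is the step showing that $\UNF$ lands in $\mb{K}^e$, and here your argument falls short of the stated claim. The paper concludes that $\UNF(f,\eta)$ is an embedding for \emph{every} morphism $(f,\eta)$ of $\Links_{\mb{K}}$ by applying the second part of \cref{prop:33}, whose hypothesis concerns only the source cocone: since $F\phi$ is an \O-colimit, the mediating morphism $\dirsup_{n} \alpha_n \circ (F\phi_n)^p$ is an embedding, with no assumption on the target cocone $\alpha$ beyond being a cocone in $\mb{K}$. You instead ask that $\alpha$ be a cocone of embeddings, and therefore obtain the embedding property only when $f$ and $\eta$ are themselves embeddings. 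Morphisms of $\Links_{\mb{K}}$ are arbitrary pairs $(f,\eta)$, so your conditional conclusion only shows that $\UNF$ is a locally continuous functor into $\mb{K}$ that restricts to $\mb{K}^e$ on embeddings; it does not establish the codomain $\mb{K}^e$ asserted by the proposition. (Your caution is understandable, since the written proof of \cref{prop:33} manipulates $\alpha_n^p$; but the reading the paper relies on, here and already in \cref{prop:main:10}, is that the embedding property of the mediating morphism comes from the source cocone being an \O-colimit, and that is the form you need to invoke to close the gap.)
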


\Cref{prop:main:3} then tells us that $\GFIX$ produces fixed points.
To the best of our knowledge, the naturality result is new.

\begin{proposition}%
  \label{prop:main:3}
  Let $\mb{K}$ be an \O-cocomplete \O-category.
  There exists a natural isomorphism $\ms{fold} : \UNF \nto \GFIX$ with inverse $\ms{unfold} : \GFIX \nto \UNF$.
  They are explicitly given as follows.
  Let $\lk{K}{k}{F}$ be an object of $\Links_{\mb{K}}$ and let $\kappa : \Omega\lk{K}{k}{F} \nto \GFIX\lk{K}{k}{F}$ be the chosen \O-colimit.
  The components are:
  \begin{align*}
    \ms{fold}_{\lk{K}{k}{F}} &= \dirsup_{n \in \N} \kappa_{n+1} \circ F\kappa_n^p :  F(\GFIX\lk{K}{k}{F}) \to \GFIX\lk{K}{k}{F}\\
    \ms{unfold}_{\lk{K}{k}{F}} &= \dirsup_{n \in \N} F\kappa_n \circ \kappa_{n+1}^p : \GFIX\lk{K}{k}{F} \to F(\GFIX\lk{K}{k}{F}).
  \end{align*}
  Naturality means that for every $(f, \eta) : \lk{K}{k}{F} \to \lk{L}{l}{G}$, the following diagram commutes:
  \begin{equation}%
    \label[diagram]{eq:main:9}%
    \begin{tikzcd}[column sep=4em]%
      F(\GFIX\lk{K}{k}{F}) \ar[r, shift left=0.5ex, "{\ms{fold}_{\lk{K}{k}{F}}}"] \ar[d, swap, "{\UNF(f, \eta)}"]  & \GFIX\lk{K}{k}{F} \ar[l, shift left=0.5ex, "{\ms{unfold}_{\lk{K}{k}{F}}}"]  \ar[d, "{\GFIX(f, \eta)}"]\\%
      G(\GFIX\lk{L}{l}{G}) \ar[r, shift left=0.5ex, "{\ms{fold}_{\lk{L}{l}{G}}}"] & \GFIX\lk{L}{l}{G} \ar[l, shift left=0.5ex, "{\ms{unfold}_{\lk{L}{l}{G}}}"].%
    \end{tikzcd}%
  \end{equation}
\end{proposition}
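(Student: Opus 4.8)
The plan is to exhibit $\ms{fold}$ and $\ms{unfold}$ as the canonical mediating morphisms between two \O-colimits of the once-shifted chain, and then to prove naturality by the universal property of that colimit. Throughout, write $\phi$ for the chosen \O-colimit $\Omega\lk{K}{k}{F} \nto \GFIX\lk{K}{k}{F}$ (the cocone called $\kappa$ in the statement) and $\gamma$ for the chosen \O-colimit $\Omega\lk{L}{l}{G} \nto \GFIX\lk{L}{l}{G}$. The first observation is that since $\Omega\lk{K}{k}{F}(n) = F^nK$ with connecting maps $F^nk$, applying $F$ pointwise yields exactly $\Omega\lk{K}{k}{F}$ reindexed by $n \mapsto n+1$: the chains $F(\Omega\lk{K}{k}{F})$ and the once-shifted chain share the same objects $F^{n+1}K$ and connecting morphisms $F^{n+1}k$. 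Over this common shifted chain sit two cocones, the restriction $(\phi_{n+1})_n$ of the original colimit and the image $(F\phi_n)_n$ under $F$. Each is an \O-colimit: shifting an $\omega$-chain leaves its \O-colimit unchanged, and $F$ preserves \O-colimits because it is locally continuous. Indeed, $F$ sends the e-p-pair $(\phi_n, \phi_n^p)$ to an e-p-pair, since $F\phi_n \circ F\phi_n^p = F(\phi_n\circ\phi_n^p) \sqsubseteq F\,\ms{id} = \ms{id}$ by local monotonicity, and then local continuity gives $\dirsup_n F\phi_n\circ F\phi_n^p = F\bigl(\dirsup_n \phi_n\circ\phi_n^p\bigr) = F\,\ms{id} = \ms{id}$.

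Next I would define $\ms{fold}_{\lk{K}{k}{F}}$ and $\ms{unfold}_{\lk{K}{k}{F}}$ as the two mediating morphisms between these \O-colimits furnished by \cref{prop:33}(1): the morphism mediating from $F\phi$ to $(\phi_{n+1})_n$ is $\dirsup_n \phi_{n+1}\circ F\phi_n^p$, which is the claimed formula for $\ms{fold}$, and the morphism mediating in the reverse direction is $\dirsup_n F\phi_n\circ\phi_{n+1}^p = \ms{unfold}$. The mediating property yields the key identities $\ms{fold}_{\lk{K}{k}{F}}\circ F\phi_n = \phi_{n+1}$ and $\ms{unfold}_{\lk{K}{k}{F}}\circ\phi_{n+1} = F\phi_n$. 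Composing these shows that $\ms{fold}\circ\ms{unfold}$ and $\ms{unfold}\circ\ms{fold}$ are mediating endomorphisms of the \O-colimits $(\phi_{n+1})_n$ and $F\phi$ respectively; since \O-colimits are colimiting by \cref{prop:33}(3), mediating morphisms are unique, so both composites are identities and $\ms{fold}, \ms{unfold}$ are mutually inverse isomorphisms.

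Finally, for naturality I would again use that $F\phi$ is colimiting (\cref{prop:33}(3)), so that a morphism out of $\UNF\lk{K}{k}{F} = F(\GFIX\lk{K}{k}{F})$ is determined by its composites with the cocone components $F\phi_n$. I would therefore precompose both legs of \cref{eq:main:9} with $F\phi_n$ and reduce each using the mediating identities. Recalling the formulas of \cref{prop:main:8,prop:main:2} and applying \cref{prop:33}(1) gives $\GFIX(f,\eta)\circ\phi_{n+1} = \gamma_{n+1}\circ\Omega(f,\eta)_{n+1}$ and $\UNF(f,\eta)\circ F\phi_n = G\gamma_n\circ\Omega(f,\eta)_{n+1}$. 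Hence the left leg yields $\GFIX(f,\eta)\circ\ms{fold}_{\lk{K}{k}{F}}\circ F\phi_n = \GFIX(f,\eta)\circ\phi_{n+1} = \gamma_{n+1}\circ\Omega(f,\eta)_{n+1}$, while the right leg yields $\ms{fold}_{\lk{L}{l}{G}}\circ\UNF(f,\eta)\circ F\phi_n = \ms{fold}_{\lk{L}{l}{G}}\circ G\gamma_n\circ\Omega(f,\eta)_{n+1} = \gamma_{n+1}\circ\Omega(f,\eta)_{n+1}$, using the identity $\ms{fold}_{\lk{L}{l}{G}}\circ G\gamma_n = \gamma_{n+1}$ from the second paragraph applied to the link $\lk{L}{l}{G}$. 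As the two composites agree for every $n$, the square for $\ms{fold}$ commutes, and naturality of $\ms{unfold}$ follows automatically as it is the pointwise inverse of the natural isomorphism $\ms{fold}$. The main obstacle is bookkeeping: confirming that $F(\Omega\lk{K}{k}{F})$ is genuinely the shifted chain and that each displayed supremum is the mediating morphism of the advertised pair of cocones, so that \cref{prop:33}(1) and the uniqueness in \cref{prop:33}(3) apply. Once these identifications are secured, the remaining steps are routine continuity manipulations.
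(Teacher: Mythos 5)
Your proposal is correct, and its core identification coincides with the paper's: both recognize $F\kappa$ and the shifted cocone $(\kappa_{n+1})_n$ as \O-colimits of the same chain $F\Omega\lk{K}{k}{F}$ and obtain $\ms{fold}_{\lk{K}{k}{F}}$ as the mediating morphism between them furnished by \cref{prop:33}. Where you diverge is in the verification steps. The paper proves invertibility by observing that $\ms{fold}_{\lk{K}{k}{F}}$ is an embedding with $\ms{fold}_{\lk{K}{k}{F}}^p = \ms{unfold}_{\lk{K}{k}{F}}$ and then directly computing $\ms{fold}\circ\ms{unfold} = \dirsup_{n}\kappa_{n+1}\circ\kappa_{n+1}^p = \ms{id}$; and it proves naturality of $\ms{fold}$ by an explicit manipulation of directed suprema, shifting indices and inserting $\phi_{n+1}^p\circ\phi_{n+1} = \ms{id}$ and $G\gamma_n^p\circ G\gamma_n = \ms{id}$. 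You instead extract the mediating identities $\ms{fold}\circ F\phi_n = \phi_{n+1}$ and $\ms{unfold}\circ\phi_{n+1} = F\phi_n$ and settle both invertibility and naturality by uniqueness of morphisms out of a colimiting cocone, via part (3) of \cref{prop:33}. Your route trades order-theoretic calculation for the universal property, which is arguably cleaner and less error-prone; its only extra obligation is to confirm that $\GFIX(f,\eta)$ and $\UNF(f,\eta)$ really are the mediating morphisms you claim, \ie, that $\GFIX(f,\eta)\circ\phi_{n+1} = \gamma_{n+1}\circ\Omega(f,\eta)_{n+1}$ and $\UNF(f,\eta)\circ F\phi_n = G\gamma_n\circ\Omega(f,\eta)_{n+1}$, which do follow from the constructions in \cref{prop:main:8,prop:main:2}. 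Both arguments are sound, so I have no corrections, only the observation that your verification strategy differs from (and slightly streamlines) the one in the paper.
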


We can specialize these constructions to produce fixed points that are, in a sense made clear in \cref{sec:canon-fixed-points}, canonical.
Assume $\mb{K}^e$ has an initial object.
We first observe that $[ \mb{K} \lcto \mb{K} ]$ embeds fully and faithfully into $\Links_{\mb{K}}$ via the locally continuous functor that maps objects $F : \mb{K} \lcto \mb{K}$ to the link $\lk{\bot}{\bot}{F}$ and natural transformations $\eta : F \nto G$ to the morphism $(\ms{id}_\bot, \eta)$.

We say that an \O-category $\mb{K}$ has \defin{strict morphisms} if it has zero morphisms and $0_{AB}$ is the least element of $\mb{K}(A, B)$ for all object objects $A$ and $B$.
We say that $\mb{K}$ \defin{supports canonical fixed points} if it has an initial object, strict morphisms, and is \O-cocomplete.
Assume $\mb{K}$ supports canonical fixed points.
Then $\bot$ is also the initial object of $\mb{K}^e$.
We define the \defin{canonical-fixed-point functor} $\FIX : [ \mb{K} \lcto \mb{K} ] \to \mb{K}^e$ as the composition $[ \mb{K} \lcto \mb{K} ] \hookrightarrow \Links_{\mb{K}} \xrightarrow{\GFIX} \mb{K}^e$.
This functor is locally continuous by \cref{prop:main:8}.

\subsection{Parametrized Fixed Points}
\label{sec:param-fixed-points}

In this section, we show that taking parametrized fixed points of functors is functorial.
In particular, we show in \cref{prop:main:4} that for appropriate \O-categories $\mb{D}$ and $\mb{E}$, there exists a locally continuous functor $[\mb{D} \times \mb{E} \lcto \mb{E} ] \to [\mb{D} \lcto \mb{E}^e ]$ that induces an external dagger operation.
This proposition generalizes the mapping $F \mapsto \sfix{F}$ of \cite[Proposition~5.2.7]{abramsky_jung_1995:_domain_theor} from functors on categories of pointed domains closed under bilimits to locally continuous functors on $\mb{O}$-categories.
By also defining the action on natural transformation, it makes the mapping's functorial structure evident.
It specializes the construction of \cite[\S~3]{lehmann_smyth_1977:_data_types} to \O-categories.

Given a functor $F : \mb{D} \times \mb{E} \to \mb{E}$ and an object $D$ of $\mb{D}$, we write $F_D$ for the partial application $\Lambda F D = F(D, {-}) : \mb{E} \to \mb{E}$.

\begin{restatable}{proposition}{PropMainE}%
  \label{prop:main:4}
  Let $\mb{D}$ and $\mb{E}$ be $\mb{O}$-categories.
  Assume $\mb{E}$ supports canonical fixed points.
  The following defines a locally continuous functor:
  \[
    \sfix{({\cdot})} = [\ms{id}_{\mb{D}} \to \FIX] \circ \Lambda : [\mb{D} \times \mb{E} \lcto \mb{E} ] \to [\mb{D} \lcto \mb{E}^e ].
  \]
  Explicitly, $\sfix{F}D = \FIX(F_D)$ is the canonical fixed point of $F(D, -)$.
  Given a natural transformation $\eta : F \nto G$, $\left(\sfix{\eta}\right)_D = \FIX\left((\Lambda \eta)_D\right)$.
\end{restatable}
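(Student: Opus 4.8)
The plan is to factor $\sfix{(\cdot)}$ as a composite of two locally continuous functors and then invoke the fact that locally continuous functors compose. Reading off the definition, the first factor is the currying operation $\Lambda : [\mb{D}\times\mb{E} \lcto \mb{E}] \to [\mb{D} \lcto [\mb{E}\lcto\mb{E}]]$ and the second is postcomposition with the fixed-point functor, $[\ms{id}_{\mb{D}}\to\FIX] : [\mb{D}\lcto[\mb{E}\lcto\mb{E}]]\to[\mb{D}\lcto\mb{E}^e]$. I will show each factor is a well-defined locally continuous functor; local continuity of the composite, and hence of $\sfix{(\cdot)}$, is then immediate.

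First I would verify that $\Lambda$ is well defined and locally continuous. On objects, given $F : \mb{D}\times\mb{E}\lcto\mb{E}$ I must check that $\Lambda F : \mb{D}\to[\mb{E}\lcto\mb{E}]$, $\Lambda F\,D = F_D$, really lands in locally continuous functors and is itself locally continuous. That each $F_D = F(D,-)$ is locally continuous follows by fixing the first coordinate in the joint local continuity of $F$; that $D\mapsto F_D$ is locally continuous follows by fixing $g=\ms{id}$ and varying $f$ in the continuity of $(f,g)\mapsto F(f,g)$, together with the fact that the order on $[\mb{E}\lcto\mb{E}](F_{D_1},F_{D_2})$ is computed componentwise. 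On morphisms, $\Lambda$ is merely a reindexing of components and is in fact a bijection of each hom-dcpo onto its image, so it trivially preserves directed suprema. Indeed $\Lambda$ is the exponential transpose furnished by the 2-cartesian closed structure of $\O$, so its local continuity may alternatively be quoted from that structure.

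Next I would treat $[\ms{id}_{\mb{D}}\to\FIX]$. By \cref{prop:main:8} and the definition of $\FIX$ as the composite $[\mb{E}\lcto\mb{E}]\hookrightarrow\Links_{\mb{E}}\xrightarrow{\GFIX}\mb{E}^e$, the functor $\FIX:[\mb{E}\lcto\mb{E}]\to\mb{E}^e$ is locally continuous. Postcomposition sends $G:\mb{D}\lcto[\mb{E}\lcto\mb{E}]$ to $\FIX\circ G : \mb{D}\to\mb{E}^e$ and a natural transformation $\alpha:G\nto H$ to the transformation with components $\FIX(\alpha_D)$. To establish local continuity I would take a directed family $\{\alpha^{(i)}\}$ in a hom-dcpo of $[\mb{D}\lcto[\mb{E}\lcto\mb{E}]]$ and use that directed suprema of natural transformations are computed componentwise, so $(\dirsup_i\alpha^{(i)})_D=\dirsup_i\alpha^{(i)}_D$; local continuity of $\FIX$ then gives $\FIX\bigl((\dirsup_i\alpha^{(i)})_D\bigr)=\dirsup_i\FIX(\alpha^{(i)}_D)$, which is precisely the $D$-component of $\dirsup_i[\ms{id}_{\mb{D}}\to\FIX](\alpha^{(i)})$, again because suprema in the codomain are componentwise. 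Since $\FIX$ is valued in $\mb{E}^e$, the composite $\FIX\circ\Lambda F$ is an $\mb{E}^e$-valued functor, matching the stated codomain $[\mb{D}\lcto\mb{E}^e]$.

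Composing the two locally continuous functors yields the locally continuous functor $\sfix{(\cdot)}$, and unwinding the definitions of $\Lambda$ and $[\ms{id}_{\mb{D}}\to\FIX]$ recovers the stated formulas $\sfix{F}D=\FIX(F_D)$ and $(\sfix{\eta})_D=\FIX((\Lambda\eta)_D)$. The main obstacle is bookkeeping rather than mathematical depth: one must keep careful track of the three nested functor categories and repeatedly use that both the partial order and its directed suprema are computed componentwise, so that currying and postcomposition interact correctly with the hom-dcpo enrichment. Once this is in place, local continuity of $\FIX$ does all the real work.
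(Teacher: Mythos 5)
Your proposal is correct and follows essentially the same route as the paper: factor $\sfix{(\cdot)}$ as $\Lambda$ followed by postcomposition with $\FIX$, appeal to the local continuity of $\Lambda$ (from the cartesian closed structure of \O) and of $\FIX$ (via \cref{prop:main:8}), and conclude by closure of locally continuous functors under composition before unwinding the definitions to get the explicit formulas. Your added verification that postcomposition $[\ms{id}_{\mb{D}} \to \FIX]$ preserves directed suprema componentwise is a detail the paper leaves implicit, but it is the same argument.
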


Let {\CFP} be the full subcategory of {\O} whose objects are \O-categories that support canonical fixed points.
It is 2-cartesian closed~\cite[Theorem~7.3.11]{fiore_1994:_axiom_domain_theor}.

\begin{corollary}
  \label{cor:main:1}
  \Cref{prop:main:4} defines an external dagger operator on {\CFP}.
\end{corollary}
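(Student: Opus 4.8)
The plan is to read the external dagger operation off the functor $\sfix{(\cdot)}$ of \cref{prop:main:4}, after reconciling three harmless typing mismatches: the hom ``set'' in the $2$-category $\CFP$ is really a category, the codomain of $\sfix{(\cdot)}$ is $[\mb{D} \lcto \mb{E}^e]$ rather than $[\mb{D} \lcto \mb{E}]$, and an external dagger operation is structure on a $1$-categorical cartesian closed category. Since $\CFP$ is $2$-cartesian closed~\cite[Theorem~7.3.11]{fiore_1994:_axiom_domain_theor}, forgetting $2$-cells yields an ordinary cartesian closed category in which $\CFP(\mb{D}, \mb{E})$ is the set of objects of the hom-category $[\mb{D} \lcto \mb{E}]$, namely the locally continuous functors $\mb{D} \to \mb{E}$. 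It is on this underlying $1$-category that I would exhibit the dagger.

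First I would observe that the product $\mb{D} \times \mb{E}$ formed in $\CFP$ is the product of $\mb{D}$ and $\mb{E}$ in $\O$ and is again an object of $\CFP$; this is exactly what being a $2$-cartesian-closed full subcategory of $\O$ provides. Consequently, for any objects $\mb{D}, \mb{E}$ of $\CFP$, both are $\mb{O}$-categories and $\mb{E}$ supports canonical fixed points, so the hypotheses of \cref{prop:main:4} are satisfied and the functor $\sfix{(\cdot)} : [\mb{D} \times \mb{E} \lcto \mb{E}] \to [\mb{D} \lcto \mb{E}^e]$ is available.

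Next I would define the underlying family of functions. Write $\iota : \mb{E}^e \hookrightarrow \mb{E}$ for the inclusion of the embedding subcategory, which is locally continuous. For objects $\mb{D}, \mb{E}$ of $\CFP$, set $\dagger_{\mb{D}, \mb{E}} : \CFP(\mb{D} \times \mb{E}, \mb{E}) \to \CFP(\mb{D}, \mb{E})$ by $\dagger_{\mb{D}, \mb{E}}(F) = \iota \circ \sfix{F}$, \ie the object-action of $\sfix{(\cdot)}$ on $F$ post-composed with $\iota$; concretely this is the functor $D \mapsto \FIX(F_D)$ regarded as landing in $\mb{E}$. Being a composite of locally continuous functors, it is a locally continuous functor $\mb{D} \to \mb{E}$, hence an element of $\CFP(\mb{D}, \mb{E})$. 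This is the only well-definedness check, and it is immediate. Collecting these functions over all pairs $\mb{D}, \mb{E}$ yields the required family, so $\CFP$ carries an external dagger operation.

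I expect no genuine obstacle: every substantive ingredient---the construction of $\sfix{F}$, its local continuity, and the verification that it gives canonical fixed points---was already discharged in \cref{prop:main:4}, and the corollary merely repackages the object-action of $\sfix{(\cdot)}$ as a family of functions between hom-sets. The only points deserving a moment's care are the bookkeeping between $\mb{E}^e$ and $\mb{E}$ handled by $\iota$, and the passage from the $2$-category $\CFP$ to its underlying $1$-category; in fact $\sfix{(\cdot)}$ records strictly more data (an action on natural transformations) than a bare external dagger operation requires.
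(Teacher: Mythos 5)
Your proposal is correct and takes essentially the same route the paper intends: the corollary is immediate from \cref{prop:main:4} (together with the 2-cartesian closure of \CFP), the dagger being the object-action of $\sfix{(\cdot)}$ read as a family of functions $\CFP(\mb{D}\times\mb{E},\mb{E})\to\CFP(\mb{D},\mb{E})$. Your explicit bookkeeping with the inclusion $\mb{E}^e\hookrightarrow\mb{E}$ and the passage to the underlying 1-category only spells out what the paper leaves implicit.
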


The weak fixed-point identity gives us an analog of \cref{prop:main:3} for solutions to parametrized equations, \ie, to equations of the form $E \cong F(D,E)$.
The fixed point identity $\sfix{F}D = F(D, \sfix{F}D)$ typically does not hold for daggers of functors because the two functors are not equal on the nose.
However, it holds up to natural isomorphism, giving the weak fixed-point identity:

\begin{proposition}[Weak fixed-point identity]%
  \label{prop:32}
  Let $\mb{E}$ and $\mb{D}$ be $\mb{O}$-categories.
  Assume $\mb{E}$ supports canonical fixed points.
  Let $F : \mb{D} \times \mb{E} \to \mb{E}$ be a locally continuous functor.
  There exist natural transformations
  \begin{align*}
    \ms{Unfold}^F &: \sfix F \nto F \circ \langle \ms{id}_{\mb{D}}, \sfix F \rangle\\
    \ms{Fold}^F &: F \circ \langle \ms{id}_{\mb{D}}, \sfix F \rangle \nto \sfix F
  \end{align*}
  that form a natural isomorphism $\sfix F \cong F \circ \langle \ms{id}_{\mb{D}}, \sfix F \rangle$.
  Let $D$ be an object of $\mb{D}$.
  The $D$-components for these natural transformations are given by
  \begin{align*}
    \ms{Unfold}^F_D &= \ms{unfold}_{\lk{\bot}{\bot}{F_D}} : \GFIX\lk{\bot}{\bot}{F_D} \to \UNF\lk{\bot}{\bot}{F_D}\\
    \ms{Fold}^F_D &= \ms{fold}_{\lk{\bot}{\bot}{F_D}} : \UNF\lk{\bot}{\bot}{F_D} \to \GFIX\lk{\bot}{\bot}{F_D}
  \end{align*}
  where $\ms{unfold}$ and $\ms{fold}$ are the natural isomorphisms given by \cref{prop:main:3}.
  The definitions of $\ms{Fold}^F$ and $\ms{Unfold}^F$ are natural in $F$.
  Given any natural transformation $\eta : F \nto G$, the following two squares commute:
  \[
    \begin{tikzcd}[column sep=4em, arrows=Rightarrow]
      \sfix{F}
      \ar[r, "{\ms{Unfold}^F}"]
      \ar[d, swap, "{\sfix\eta}"]
      & F\circ\langle\ms{id}, \sfix{F} \rangle
      \ar[d, "{\eta \ast \langle \ms{id}, \sfix{\eta} \rangle}"]
      &F\circ\langle\ms{id}, \sfix{F} \rangle
      \ar[r, "{\ms{Fold}^F}"]
      \ar[d, swap, "{\eta \ast \langle \ms{id}, \sfix{\eta} \rangle}"]
      & \sfix{F}
      \ar[d, "{\sfix\eta}"]\\
      \sfix{G} \ar[r, "{\ms{Unfold}^G}"]
      & G\circ\langle\ms{id}, \sfix{G} \rangle
      & G\circ\langle\ms{id}, \sfix{G} \rangle
      \ar[r, "{\ms{Fold}^G}"]
      & \sfix{G}
    \end{tikzcd}
  \]
\end{proposition}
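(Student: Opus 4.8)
The plan is to reduce everything --- that $\ms{Fold}^F$ and $\ms{Unfold}^F$ assemble into a natural isomorphism $\sfix F \cong F\circ\langle\ms{id}_{\mb D},\sfix F\rangle$, and that they are natural in $F$ --- to per-object instances of the naturality square \cref{eq:main:9} of \cref{prop:main:3}, applied to suitably chosen morphisms of $\Links_{\mb E}$. First I would record the typing. By the definitions of $\FIX$ and $\sfix{(\cdot)}$ we have $\sfix F D = \FIX(F_D) = \GFIX\lk{\bot}{\bot}{F_D}$, while $\bigl(F\circ\langle\ms{id}_{\mb D},\sfix F\rangle\bigr)D = F(D,\sfix F D) = F_D\bigl(\GFIX\lk{\bot}{\bot}{F_D}\bigr) = \UNF\lk{\bot}{\bot}{F_D}$. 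Hence the proposed components $\ms{Unfold}^F_D = \ms{unfold}_{\lk{\bot}{\bot}{F_D}}$ and $\ms{Fold}^F_D = \ms{fold}_{\lk{\bot}{\bot}{F_D}}$ have exactly the required source and target, and by \cref{prop:main:3} they are mutually inverse isomorphisms for each fixed $D$. Everything then hinges on naturality in $D$ and in $F$.

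The main obstacle is a single computational lemma relating $\UNF$ on morphisms to horizontal composition: for every morphism $(f,\eta)\colon\lk{K}{k}{F}\to\lk{L}{l}{G}$ of $\Links_{\mb K}$,
\[
  \UNF(f,\eta) = \eta \ast \GFIX(f,\eta).
\]
I would prove this directly from the explicit formulas of \cref{prop:main:2,prop:main:8}. Writing $\eta\ast\GFIX(f,\eta) = \eta_{\GFIX\lk{L}{l}{G}}\circ F(\GFIX(f,\eta))$, I pull $F$ through the directed supremum by local continuity and distribute over composition, obtaining $\dirsup_{n} \eta_{\GFIX\lk{L}{l}{G}}\circ F\gamma_n\circ F\Omega(f,\eta)_n\circ F\phi_n^p$. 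Naturality of $\eta$ at each $\gamma_n$ rewrites $\eta_{\GFIX\lk{L}{l}{G}}\circ F\gamma_n = G\gamma_n\circ\eta_{G^nL}$, and the horizontal-iterate identity $\eta^{(n+1)} = \eta\ast\eta^{(n)}$ gives $\eta_{G^nL}\circ F\Omega(f,\eta)_n = \Omega(f,\eta)_{n+1}$. Substituting yields $\dirsup_{n} G\gamma_n\circ\Omega(f,\eta)_{n+1}\circ F\phi_n^p = \UNF(f,\eta)$, as claimed. This is the only genuinely computational step; the rest is bookkeeping about horizontal composites.

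For naturality in $D$, fix a morphism $d\colon D\to D'$ of $\mb D$. The fully faithful embedding $[\mb E\lcto\mb E]\hookrightarrow\Links_{\mb E}$ sends the natural transformation $(\Lambda F)(d)\colon F_D\nto F_{D'}$ to the morphism $(\ms{id}_\bot,(\Lambda F)(d))$, and I apply \cref{eq:main:9} to it. Its right-hand vertical arrow is $\GFIX(\ms{id}_\bot,(\Lambda F)(d)) = \FIX((\Lambda F)(d)) = \sfix F(d)$ by definition of $\FIX$ and $\sfix{(\cdot)}$. Its left-hand vertical arrow is $\UNF(\ms{id}_\bot,(\Lambda F)(d))$, which the lemma identifies with $(\Lambda F)(d)\ast\sfix F(d)$; a one-line computation of this horizontal composite, using $((\Lambda F)(d))_E = F(d,\ms{id}_E)$, shows it equals $F(d,\sfix F(d))$. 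Thus the two halves of \cref{eq:main:9} (for $\ms{fold}$ and for $\ms{unfold}$) are precisely the $d$-naturality squares for $\ms{Fold}^F$ and $\ms{Unfold}^F$, so both are natural transformations and, being componentwise inverse, a natural isomorphism.

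For naturality in $F$, fix $\eta\colon F\nto G$ and an object $D$, and apply \cref{eq:main:9} to the morphism $(\ms{id}_\bot,(\Lambda\eta)_D)\colon\lk{\bot}{\bot}{F_D}\to\lk{\bot}{\bot}{G_D}$. Its right-hand arrow is $\GFIX(\ms{id}_\bot,(\Lambda\eta)_D) = \FIX((\Lambda\eta)_D) = (\sfix\eta)_D$, and its left-hand arrow is $\UNF(\ms{id}_\bot,(\Lambda\eta)_D) = (\Lambda\eta)_D\ast(\sfix\eta)_D$ by the lemma. A short computation of this horizontal composite shows $(\Lambda\eta)_D\ast(\sfix\eta)_D = (\eta\ast\langle\ms{id},\sfix\eta\rangle)_D$, matching the vertical maps in the two squares of the statement. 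Hence both squares commute componentwise in $D$, which is exactly naturality in $F$. I expect the key lemma of the second paragraph to be the crux; once it is in hand, the remaining identifications of $\GFIX$, $\FIX$, and the various horizontal composites are routine.
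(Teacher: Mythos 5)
Your proposal is correct and takes essentially the same route as the paper: both reduce naturality in $D$ and in $F$ to instances of the naturality square \cref{eq:main:9} of \cref{prop:main:3} applied to the $\Links$-morphisms $(\ms{id}_\bot,(\Lambda F)(d))$ and $(\ms{id}_\bot,(\Lambda\eta)_D)$. The only difference is that you isolate and prove the identity $\UNF(f,\eta)=\eta\ast\GFIX(f,\eta)$ explicitly, whereas the paper's proof uses this identification implicitly when it declares the two naturality squares to be ``exactly'' the same; your computation of that step is correct and fills in the detail the paper elides.
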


To the best of our knowledge, the fact that $\ms{Fold}^F$ and $\ms{Unfold}^F$ are natural in $F$ is new.
It will be key to defining the interpretations of recursive session types in \cref{sec:appl-semant}.

\Cref{prop:main:13} follows easily from \cref{prop:main:4,prop:32}.
It illustrates the action of $\sfix{(\cdot)}$ on natural transformations and gives identities that will be useful in \cref{sec:appl-semant}.
It affirmatively answers the first question of the introduction: the definition of $\sfix{(\cdot)} : [\mb{D} \times \mb{E} \lcto \mb{E} ] \to [\mb{D} \lcto \mb{E}^e ]$ is natural in $\mb{D}$.

\begin{restatable}[Parameter Identity]{proposition}{propMainBD}%
  \label{prop:main:13}%
  Let $\mb{C}$, $\mb{D}$ and $\mb{E}$ be \O-categories and assume $\mb{E}$ supports canonical fixed points.
  Let $F, H : \mb{D} \times \mb{E} \to \mb{E}$ and $G, I : \mb{C} \to \mb{D}$ be locally continuous.
  Set $F_G = F \circ (G \times \ms{id}_E) : \mb{C} \times \mb{E} \to \mb{E}$, and analogously for $H_I$.
  Let $\phi : F \nto H$ and $\gamma : G \nto I$ be natural transformations.~Then
  \begin{align}
    \sfix{F_G} &= \sfix{F} \circ G : \mb{C} \to \mb{E},\label{eq:main:10}\\
    F_G \circ \langle \ms{id}_{\mb{C}}, \sfix{F_G} \rangle &= F \circ \langle \ms{id}_{\mb{D}}, \sfix{F} \rangle \circ G : \mb{C} \to \mb{E},\label{eq:main:11}\\
    \sfix{\left(\phi \ast (\gamma \times \ms{id})\right)} &= \sfix{\phi} \ast \gamma : \sfix{F_G} \nto \sfix{H_I},\label{eq:main:14}\\
    \ms{Fold}^{F_G} &= \ms{Fold}^FG : F_G \circ \langle \ms{id}_{\mb{C}}, \sfix{F_G} \rangle \nto \sfix{F_G},\label{eq:main:12}\\
    \ms{Unfold}^{F_G} &= \ms{Unfold}^FG : \sfix{F_G} \nto F_G \circ \langle \ms{id}_{\mb{C}}, \sfix{F_G} \rangle.\label{eq:main:13}
  \end{align}
\end{restatable}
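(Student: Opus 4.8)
The plan is to reduce all five identities to the defining equation $\sfix{(\cdot)} = [\ms{id} \to \FIX] \circ \Lambda$ together with two strict compatibilities of the currying functor $\Lambda$ with the cartesian product. First I would record the object-level identity $(F_G)_C = F(G(C), -) = F_{G(C)}$, and more generally the equalities
\[
  \Lambda(F \circ (G \times \ms{id}_{\mb{E}})) = (\Lambda F) \circ G
  \quad\text{and}\quad
  \Lambda(\phi \ast (\gamma \times \ms{id}_{\mb{E}})) = (\Lambda \phi) \ast \gamma,
\]
the first an equality of functors $\mb{C} \to [\mb{E} \lcto \mb{E}]$ and the second of natural transformations. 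Both express naturality of the exponential transpose in its product argument, lifted to $2$-cells; I would check the second by comparing $(C, E)$-components, where both sides reduce to $\phi_{(I(C), E)} \circ F(\gamma_C, \ms{id}_E)$.

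Given these, \cref{eq:main:10} is immediate: $\sfix{F_G} = \FIX \circ \Lambda(F_G) = \FIX \circ (\Lambda F) \circ G = \sfix{F} \circ G$. Then \cref{eq:main:11} follows by the calculus of products. Expanding $F_G = F \circ (G \times \ms{id}_{\mb{E}})$ and using $(G \times \ms{id}_{\mb{E}}) \circ \langle \ms{id}_{\mb{C}}, \sfix{F_G} \rangle = \langle G, \sfix{F_G} \rangle$, I would substitute $\sfix{F_G} = \sfix{F} \circ G$ from \cref{eq:main:10} and factor $\langle G, \sfix{F} \circ G \rangle = \langle \ms{id}_{\mb{D}}, \sfix{F} \rangle \circ G$, yielding $F \circ \langle \ms{id}_{\mb{D}}, \sfix{F} \rangle \circ G$.

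For \cref{eq:main:14} I would chain the compatibilities through the definition. Writing $[\ms{id} \to \FIX]$ as left whiskering by $\FIX$, we have
\[
  \sfix{(\phi \ast (\gamma \times \ms{id}))} = \FIX \ast \Lambda(\phi \ast (\gamma \times \ms{id})) = \FIX \ast ((\Lambda \phi) \ast \gamma) = (\FIX \ast \Lambda \phi) \ast \gamma = \sfix{\phi} \ast \gamma,
\]
the third equality being associativity of horizontal composition. For \cref{eq:main:12,eq:main:13} I would argue that both sides are natural transformations with equal (co)domains and equal components. The (co)domains match by \cref{eq:main:10,eq:main:11}: the whiskering $\ms{Fold}^F \ast G$ has type $(F \circ \langle \ms{id}_{\mb{D}}, \sfix{F} \rangle) \circ G \nto \sfix{F} \circ G$, which rewrites to $F_G \circ \langle \ms{id}_{\mb{C}}, \sfix{F_G} \rangle \nto \sfix{F_G}$. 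For components, \cref{prop:32} gives $(\ms{Fold}^F \ast G)_C = \ms{Fold}^F_{G(C)} = \ms{fold}_{\lk{\bot}{\bot}{F_{G(C)}}}$, while $\ms{Fold}^{F_G}_C = \ms{fold}_{\lk{\bot}{\bot}{(F_G)_C}}$; these agree since $(F_G)_C = F_{G(C)}$. The argument for $\ms{Unfold}$ is identical.

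The main obstacle I anticipate is not the bookkeeping but justifying that the two currying identities hold strictly rather than merely up to coherent isomorphism, since every subsequent equality is an on-the-nose equation. The crux is therefore to pin down the $2$-cartesian closed structure on $\O$ precisely enough that the exponential transpose is strictly $2$-natural in its product argument, so that $\Lambda(F_G) = (\Lambda F) \circ G$ and its $2$-cell analogue are genuine equalities of functors and of natural transformations; once that is secured, the remaining steps are routine.
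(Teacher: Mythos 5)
Your proposal is correct and follows essentially the same route as the paper's proof: reduce \cref{eq:main:10,eq:main:14} to the strict compatibility of $\Lambda$ with precomposition by $G \times \ms{id}$ (in both its functor and 2-cell forms), derive \cref{eq:main:11} by the product calculus, and verify \cref{eq:main:12,eq:main:13} componentwise via $(F_G)_C = F_{GC}$ and \cref{prop:32}. The strictness concern you flag is handled in the paper by the fact that the 2-cartesian closed structure of \O{} is inherited on the nose from $\mb{Cat}$ (\cref{prop:main:6,lemma:main:5,lemma:main:8}), so the currying identities are genuine equalities.
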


\section{Canonicity of Fixed Points}
\label{sec:canon-fixed-points}

It is well-known that the fixed points $\FIX(F)$ of \cref{sec:funct-fixed-points} are canonical in the sense that $(\FIX(F), \ms{fold})$ is the \textit{initial} $F$-algebra.
Given a functor $F : \mb{C} \to \mb{C}$, an \defin{$F$-algebra} is a pair $(A,a)$ where $A$ and $a$ are respectively an object and a morphism $FA \to A$ in $\mb{C}$.
A morphism $f : (A, a) \to (B, b)$ of $F$-algebras is a morphism $f : A \to B$ in $\mb{C}$ such that $f \circ a = b \circ Ff$.
Such a morphism is called an \defin{$F$-algebra homomorphism}.
These objects and morphisms form a category $\algc{\mb{C}}{F}$ of $F$-algebras.
The category $\coalgc{\mb{C}}{F}$ of \defin{$F$-coalgebras} is symmetrically defined.
An $F$-coalgebra is a pair $(A,a)$ where $a : A \to FA$.
An $F$-coalgebra homomorphism $(A,a) \to (B, b)$ is a morphism $f : A \to B$ in $\mb{C}$ such that $Ff \circ a = b \circ f$.

\begin{restatable}[{\cites[Lemma~2]{smyth_plotkin_1982:_categ_theor_solut}[Theorem~10.3]{gunter_1992:_seman_progr_languag}}]{proposition}{propEB}
  \label{prop:41}
  Let $\mb{K}$ be an \O-cocomplete \O-category and let $F$ be a locally continuous functor on $\mb{K}$.
  Let $\kappa : \Omega(\bot,\bot,F) \nto \FIX(F)$ be the chosen \O-colimit.
  \begin{enumerate}
  \item \label{item:16} The initial $F$-algebra is $(\FIX(F), \ms{fold})$.
    Given any other $F$-algebra $(A, a)$, the unique $F$-algebra homomorphism $(\FIX(F), \ms{fold}) \to (A, \alpha)$ is the embedding $\phi = \dirsup_{n \in \N} \alpha_n \circ \kappa_n^p$, where $\alpha : \Omega(\bot,\bot,F) \nto A$ is the cocone inductively defined by $\alpha_0 = \bot_A$ and $\alpha_{n + 1} = a \circ F\alpha_n$.
  \item \label{item:17}
    The terminal $F$-coalgebra is $(\FIX(F), \ms{unfold})$.
    Given any other $F$-coalgebra $(B, b)$, the unique $F$-coalgebra homomorphism $(B, b) \to (\FIX(F), \ms{unfold})$ is the projection $\rho = \dirsup_{n \in \N} \kappa_n \circ \beta_n$, where $\beta : B \nto \Omega(\bot,\bot,F)$ is the cone inductively defined by $\beta : \beta_0 = \bot_B^p$ and $\beta_{n + 1} = F\beta_n \circ b$.
  \item \label{item:18}
    Given an $F$-algebra $(C, c)$ where $c$ is an embedding, then $(C, c^p)$ is an $F$-coalgebra and $(\phi, \rho)$ form an e-p-pair.
  \end{enumerate}
\end{restatable}

\Cref{prop:45} generalizes \cref{prop:41} to parametrized fixed points.
Given a horizontal morphism $f : A \times B \to B$ in a 2-cartesian category, an \defin{$f$-algebra}~\cite[Definition~2.3]{bloom_esik_1995:_some_equat_laws_initial} is a pair $(g, u)$ where $g : A \to B$ is a horizontal morphism and $u : f \circ \langle \ms{id}_A, g \rangle \nto g$ is vertical.
An \defin{$f$-algebra homomorphism} $(g,u) \to (h,v)$ is a vertical morphism $w : g \nto h$ such that $w \circ u = v \circ (f \ast \langle \ms{id}_A, w\rangle)$.
These $f$-algebras and $f$-algebra homomorphisms form a category.

\begin{restatable}{proposition}{propEF}
  \label{prop:45}
  Let $\mb{E}$ and $\mb{D}$ be $\mb{O}$-categories, and assume $\mb{E}$ supports canonical fixed points.
  Let $F : \mb{D} \times \mb{E} \to \mb{E}$ be a locally continuous functor.
  Let $\sfix{F} : \mb{D} \to \mb{E}$ be given by \cref{prop:main:4}, and $\ms{Fold}$ and $\ms{Unfold}$ by \cref{prop:32}.
  Then $(\sfix{F}, \ms{Fold})$ and $(\sfix{F}, \ms{Unfold})$ are respectively the initial $F$-algebra and terminal $F$-coalgebra.
  \begin{enumerate}
  \item Given any other $F$-algebra $(G, \gamma)$, the mediating morphism $\phi : \sfix{F} \to G$ is a natural family of embeddings.
    The component $\phi_D$ is the unique $F_D$-algebra homomorphism $(\sfix{F}D, \ms{Fold}_D) \to (GD, \gamma_D)$ given by \cref{prop:41}.\label{item:3}
  \item Given any other $F$-coalgebra $(\Gamma, \gamma)$, the mediating morphism $\rho : \Gamma \to \sfix{F}$ is a natural family of projections.
    The component $\rho_D$ is the unique $F_D$-coalgebra homomorphism $(GD, \gamma_D) \to (\sfix{F}D, \ms{Unfold}_D)$ given by dualizing \cref{prop:41}.\label{item:4}
  \item Given an $F$-algebra $(A, \alpha)$ where $\alpha$ is an embedding in $[\mb{D} \lcto \mb{E}]$, then $(A, \alpha^p)$ is an $F$-coalgebra and $(\phi, \rho)$ form an e-p-pair in $[\mb{D} \lcto \mb{E}]$.\label{item:7}
  \end{enumerate}
\end{restatable}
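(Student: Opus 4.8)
The plan is to reduce this $2$-categorical statement about $F$-algebras to the ordinary, pointwise statement about $F_D$-(co)algebras supplied by \cref{prop:41}, isolating naturality in $D$ as the only genuinely new content. First I would observe that an $F$-algebra $(G,\gamma)$ is exactly a pointwise family of $F_D$-algebras: since $(F\circ\langle\ms{id}_{\mb{D}},G\rangle)(D)=F(D,GD)=F_D(GD)$, the component $\gamma_D : F_D(GD)\to GD$ makes $(GD,\gamma_D)$ an $F_D$-algebra. By \cref{prop:32} we have $\ms{Fold}^F_D=\ms{fold}_{\lk{\bot}{\bot}{F_D}}$, so the $D$-component of $(\sfix F,\ms{Fold})$ is precisely the initial $F_D$-algebra of \cref{prop:41}. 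I would therefore define $\phi$ componentwise, taking $\phi_D=\dirsup_{n\in\N}\alpha^D_n\circ\kappa^{D,p}_n$ to be the unique $F_D$-algebra homomorphism $(\sfix F D,\ms{Fold}_D)\to(GD,\gamma_D)$ of \cref{prop:41}, where $\alpha^D_0=\bot$, $\alpha^D_{n+1}=\gamma_D\circ F_D\alpha^D_n$, and $\kappa^D:\Omega\lk{\bot}{\bot}{F_D}\nto\sfix F D$ is the chosen $\mb{O}$-colimit. Each $\phi_D$ is already an embedding by \cref{prop:41}.

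The main step is to show the $\phi_D$ are natural in $D$. Fix $d:D\to D'$ and write $\eta=(\Lambda F)(d)=F(d,{-}):F_D\nto F_{D'}$, so that $\sfix F(d)=\FIX(\eta)=\GFIX(\ms{id}_\bot,\eta)$ by \cref{prop:main:4,prop:main:8}. I would first prove, by induction on $n$, that the defining cocones are natural:
\[
  G(d)\circ\alpha^D_n=\alpha^{D'}_n\circ\Omega(\ms{id}_\bot,\eta)_n \qquad (n\in\N).
\]
The base case is initiality of $\bot$. For the step, I would expand $\alpha^D_{n+1}=\gamma_D\circ F_D\alpha^D_n$, push $G(d)$ across using naturality of $\gamma$ (giving $G(d)\circ\gamma_D=\gamma_{D'}\circ F(d,G(d))$), factor $F(d,G(d))=F_{D'}(G(d))\circ\eta_{GD}$ by bifunctoriality of $F$, and combine naturality of $\eta$ at $\alpha^D_n$ with the induction hypothesis; recognizing $\Omega(\ms{id}_\bot,\eta)_{n+1}=F_{D'}(\Omega(\ms{id}_\bot,\eta)_n)\circ\eta_{F_D^n\bot}$ then matches the two sides. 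Naturality of $\phi$ itself follows from the universal property of the colimit $\kappa^D$, which is colimiting by \cref{prop:33}: precomposing $G(d)\circ\phi_D$ with $\kappa^D_n$ gives $G(d)\circ\alpha^D_n$, while precomposing $\phi_{D'}\circ\sfix F(d)$ with $\kappa^D_n$ gives $\alpha^{D'}_n\circ\Omega(\ms{id}_\bot,\eta)_n$ (using the mediating-morphism identity $\sfix F(d)\circ\kappa^D_n=\kappa^{D'}_n\circ\Omega(\ms{id}_\bot,\eta)_n$ from \cref{prop:33,prop:main:8} and $\phi_{D'}\circ\kappa^{D'}_n=\alpha^{D'}_n$); the two agree by the cocone naturality just established, so the two morphisms out of the colimit coincide.

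With $\phi$ natural, the $F$-algebra homomorphism equation $\phi\circ\ms{Fold}^F=\gamma\circ(F\ast\langle\ms{id},\phi\rangle)$ holds because it holds componentwise, each component being the $F_D$-algebra homomorphism identity of \cref{prop:41}; uniqueness is identical, since any homomorphism $(\sfix F,\ms{Fold})\to(G,\gamma)$ restricts to $F_D$-algebra homomorphisms and hence equals $\phi$ componentwise. This proves \ref{item:3} and initiality of $(\sfix F,\ms{Fold})$. Part \ref{item:4} is the exact dual: a coalgebra $(\Gamma,\gamma)$ restricts to $F_D$-coalgebras, the mediating projections $\rho_D=\dirsup_n\kappa^D_n\circ\beta^D_n$ come from the dual of \cref{prop:41}, and a dual induction on the cones $\beta^D_0=\bot^p$, $\beta^D_{n+1}=F_D\beta^D_n\circ\gamma_D$ gives naturality of $\rho$. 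For \ref{item:7}, if $\alpha$ is an embedding in $[\mb{D}\lcto\mb{E}]$ then each $\alpha_D$ is an embedding in $\mb{E}$ with projection $(\alpha^p)_D$, so $(A,\alpha^p)$ is an $F$-coalgebra and \cref{prop:41} (item \ref{item:18}) makes $(\phi_D,\rho_D)$ an e-p-pair in $\mb{E}$ for every $D$; since $\phi$ and $\rho$ are natural and $\rho_D=\phi_D^p$ pointwise, $(\phi,\rho)$ is an e-p-pair in $[\mb{D}\lcto\mb{E}]$.

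I expect the inductive naturality of the cocones $\alpha^D$ to be the main obstacle, as it is the only place where the parameter action of $F$ (carried by $\eta=F(d,{-})$) must be reconciled with the recursively defined algebra cocones; the bifunctorial factorization $F(d,G(d))=F_{D'}(G(d))\circ\eta_{GD}$ and the bookkeeping of the horizontal iterates $\Omega(\ms{id}_\bot,\eta)_n$ are the delicate points. Everything else is a formal transfer of \cref{prop:41} across components, mediated by the colimit universal property of \cref{prop:33}.
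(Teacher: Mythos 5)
Your proposal is correct and follows essentially the same route as the paper's proof: the paper likewise defines $\phi$ componentwise via \cref{prop:41}, reduces naturality of $\phi$ to showing that $Gf$ and $\sfix{F}f$ both mediate from the colimiting cone to the cocone $\nu^B \circ F_f^\omega$, and establishes this by the same induction using naturality of $\gamma$, the bifunctorial factorization $F(f,Gf) = (F_f)_{GB} \circ F_A(Gf)$, and the expansion of $\Omega(\ms{id}_\bot,F_f)_{n+1}$. Parts 2 and 3 are handled identically (dualization and pointwise e-p-pairs).
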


\Cref{prop:45} presents the converse of a class of external daggers on horizontal morphisms considered in \cite[\S~2.2]{bloom_esik_1995:_some_equat_laws_initial}.
Given a horizontal morphism $f : A \times B \to B$ in a cartesian 2-category, they define $\sfix{f} = g$ where $(g,v)$ is the initial $f$-algebra.
They do not consider the action of this dagger on vertical morphisms.
In contrast, we give a dagger operation that determines initial $f$-algebras.
It induces an action on both horizontal and vertical morphisms.
By \cref{prop:32,prop:main:13}, its action on vertical morphisms coheres with its action on horizontal morphisms.

\section{Conway Identities}
\label{sec:conway-identities}

The dagger operator of \cref{prop:main:4} satisfies the \textit{Conway identities}~\cite{bloom_esik_1995:_some_equat_laws_initial,bloom_esik_1996:_fixed_point_operat} up to isomorphism.
Our interest in the Conway identities stems from the fact that they imply a class of identities useful in the semantics of programming language.

We begin by presenting the Conway identities~\cite{bloom_esik_1995:_some_equat_laws_initial,bloom_esik_1996:_fixed_point_operat}.
Given an external dagger operation $\dagger_{A,B} : \mb{C}(A \times B, B) \to \mb{C}(A, B)$ and a morphism $f : A \times B \to B$, we write $\sfix{f}$ for $\dagger_{A,B}(f)$.
An external dagger $\dagger$ satisfies:
\begin{enumerate}
\item the \defin{parameter identity} or \defin{naturality} if for all $f : B \times C \to C$ and $g : A \to B$, $\sfix{(f \circ (g \times \ms{id}_C))} = \sfix{f} \circ g$.\label[property]{item:main:1}
\item the \defin{composition identity} or \defin{parametrized dinaturality} if for all $f : P \times A \to B$ and $g : P \times B \to A$, $\sfix{(g \circ \langle \pi^{P \times A}_P, f \rangle)} = g \circ \langle \ms{id}_P, \sfix{(f \circ \langle \pi^{P \times B}_P, g \rangle)} \rangle$.\label[property]{item:main:2}
\item the \defin{double dagger identity} or \defin{diagonal property} if for all $f : A \times B \times B \to B$, $\sfix{(\sfix{f})} = \sfix{(f \circ (\ms{id}_A \times \langle \ms{id}_B, \ms{id}_B \rangle))}$.\label[property]{item:main:3}
\item the \defin{abstraction identity} if the following diagram commutes:\label[property]{item:main:4}
  \[
    \begin{tikzcd}[column sep=1.5em,row sep=1em]
      {[A \times B \times C \to C]}
      \ar[rrr, "{[\ms{id}_A \times \langle \pi_B, \ms{ev}_{B, C} \rangle \to \ms{id}_C]}"]
      \ar[d, swap, "{\dagger_{A \times B, C}}"]
      &&
      & {[A \times [B \to C] \times B \to C]}
      \ar[d, "{\Lambda}"]
      \\
      {[A \times B \to C]}
      \ar[r, "{\Lambda}"]
      &
      {[A \to [B \to C]]}
      && {[A \times [B \to C] \to [B \to C]]}
      \ar[ll, swap, "{\dagger_{A,[B \to C]}}"]
    \end{tikzcd}
  \]
\item the \defin{power identities} if for all $f : A \times B \to B$ and $n > 1$, $\sfix{(f^n)} = \sfix{f}$, where $f^n : A \times B \to B$ is inductively defined by $f^0 = \pi^{A \times B}_B$ and $f^{n + 1} = f \circ \langle \pi^{A \times B}_A, f^n \rangle$.\label[property]{item:main:5}
\end{enumerate}

An external dagger satisfies the \defin{Conway identities} if it satisfies \cref{item:main:1,item:main:2,item:main:3,item:main:4}.
\Cref{theorem:main:1} answers the last question of \cref{sec:introduction}.
It is immediate from \cite[Theorem~7.1]{bloom_esik_1995:_some_equat_laws_initial}, \cref{prop:main:13,prop:45}, and the fact that {\CFP} is 2-cartesian closed.

\begin{theorem}
  \label{theorem:main:1}
  The dagger operation of \cref{prop:main:4} satisfies the Conway identities and the power identities up to isomorphism.
\end{theorem}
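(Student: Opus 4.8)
The plan is to reduce the claim to Bloom and {\'E}sik's characterization of initial-algebra daggers, using the initiality already established in \cref{prop:45}. The central observation is that our dagger $\sfix{(\cdot)}$ \emph{is} the initial-algebra dagger of \cite[\S~2.2]{bloom_esik_1995:_some_equat_laws_initial}: by \cref{prop:45}, for every locally continuous $F : \mb{D} \times \mb{E} \to \mb{E}$ the pair $(\sfix{F}, \ms{Fold}^F)$ is the initial $F$-algebra in the 2-categorical sense fixed just before that proposition, so $\sfix{F}$ is exactly the carrier $g$ of the initial $f$-algebra by which Bloom and {\'E}sik define their $\sfix{f}$. Having matched the two daggers, I would invoke \cite[Theorem~7.1]{bloom_esik_1995:_some_equat_laws_initial}, which shows that the initial-algebra dagger on a cartesian 2-category satisfies the Conway identities up to isomorphism. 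Its hypotheses hold because {\CFP} is 2-cartesian closed \cite[Theorem~7.3.11]{fiore_1994:_axiom_domain_theor}; in particular, the internal homs $[B \to C]$ needed even to state the abstraction identity (\cref{item:main:4}) are available.

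This delivers the composition identity (\cref{item:main:2}), the double dagger identity (\cref{item:main:3}), and the abstraction identity (\cref{item:main:4}) up to isomorphism at once. For the parameter identity (\cref{item:main:1}) I would instead appeal to \cref{prop:main:13}: taking $f = F$ and $g = G$, the morphism $f \circ (g \times \ms{id})$ is precisely $F_G$, so \cref{eq:main:10} reads $\sfix{F_G} = \sfix{F} \circ G$, which is the parameter identity holding \emph{strictly} rather than only up to isomorphism. For the power identities (\cref{item:main:5}), I would observe that the initial $f$-algebra carries an $f^n$-algebra structure on the same object which is again initial, whence $\sfix{(f^n)} \cong \sfix{f}$; this is either read off directly from the initial-algebra description or obtained as a derived law of the Conway identities supplied by \cite[Theorem~7.1]{bloom_esik_1995:_some_equat_laws_initial}.

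I expect the only real work to be bookkeeping rather than new mathematics. The main point to verify is that the 2-categorical notion of $f$-algebra and of $f$-algebra homomorphism used in \cite{bloom_esik_1995:_some_equat_laws_initial} agrees on the nose with the one fixed before \cref{prop:45}, including the direction of the structure map and the coherence condition $w \circ u = v \circ (f \ast \langle \ms{id}, w \rangle)$, so that ``initial $f$-algebra'' means the same thing on both sides. The second point is to confirm that ``up to isomorphism'' is uniformly the correct weakening: the strict fixed-point equation fails here because $F \circ \langle \ms{id}_{\mb{D}}, \sfix{F} \rangle$ and $\sfix{F}$ are only naturally isomorphic (via $\ms{Fold}^F$ and $\ms{Unfold}^F$ of \cref{prop:32}), never equal on the nose. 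Once the correspondence of definitions is pinned down and {\CFP}'s 2-cartesian-closed structure is in hand, the theorem is immediate.
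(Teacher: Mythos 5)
Your proposal matches the paper's own argument: the paper proves this theorem by declaring it immediate from \cite[Theorem~7.1]{bloom_esik_1995:_some_equat_laws_initial}, \cref{prop:main:13,prop:45}, and the 2-cartesian closure of {\CFP}, which are exactly the ingredients you assemble (initiality from \cref{prop:45} identifying your dagger with the initial-algebra dagger, the strict parameter identity from \cref{prop:main:13}, and the remaining identities from Bloom and {\'E}sik's theorem). Your additional remarks on verifying the match of $f$-algebra definitions and on the power identities are reasonable elaborations of the same route.
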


The Conway identities imply the \defin{pairing identity}, sometimes called Beki{\v{c}}'s identity~\cite[10]{bloom_esik_1996:_fixed_point_operat}.
It is useful for solving systems of parametrized equations, \eg, as we did in \cref{sec:introduction} for the functors defining data types \texttt{even} and \texttt{odd}.

\begin{proposition}[Pairing Identity]
  \label{prop:main:12}
  Let $\mb{A}$, $\mb{B}$, and $\mb{C}$ be small \O-categories, and assume $\mb{B}$ and $\mb{C}$ support canonical fixed points.
  Let $F : \mb{A} \times \mb{B} \times \mb{C} \to \mb{B}$ and $G : \mb{A} \times \mb{B} \times \mb{C} \to \mb{C}$ be locally continuous functors.
  Set $H = \mb{A} \times \mb{B} \xrightarrow{\langle \ms{id}_{\mb{A}}, \sfix{G} \rangle} \mb{A} \times \mb{B} \times \mb{C} \xrightarrow{F} \mb{B}$.
  Then $\sfix{\langle F, G \rangle} = \langle \sfix{G} \circ \langle \ms{id}_{\mb{A}}, \sfix{H} \rangle, \sfix{H} \rangle : \mb{A} \to \mb{B} \times \mb{C}$.
\end{proposition}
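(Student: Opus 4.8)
The plan is to recognize \cref{prop:main:12} as Beki{\v{c}}'s rule and to derive it from the Conway identities, which our dagger satisfies by \cref{theorem:main:1}. Since {\CFP} is 2-cartesian closed, the product $\mb{B} \times \mb{C}$ again supports canonical fixed points, so $\langle F, G \rangle : \mb{A} \times (\mb{B} \times \mb{C}) \to \mb{B} \times \mb{C}$ lies in the domain of the dagger and $\sfix{\langle F, G \rangle}$ is defined by \cref{prop:main:4}. The identity to establish is the ``solve-for-$\mb{C}$-first'' form of Beki{\v{c}}'s rule: solving $G$ parametrically yields $\sfix{G} : \mb{A} \times \mb{B} \to \mb{C}$, substituting it for the $\mb{C}$-argument of $F$ yields $H = F \circ \langle \ms{id}, \sfix{G} \rangle$, and solving $H$ gives $\sfix{H}$, with the remaining component recovered as $\sfix{G} \circ \langle \ms{id}_{\mb{A}}, \sfix{H} \rangle$.

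First I would run the standard equational derivation of the pairing identity from the Conway identities in a Conway theory~\cite[\S~3.3]{bloom_esik_1996:_fixed_point_operat}, instantiated in {\CFP}. Regarding $\langle F, G \rangle$ as a fixed-point problem over the product object $\mb{B} \times \mb{C}$, I would (i) use the double-dagger/diagonal identity (\cref{item:main:3}) to split the fixed point over $\mb{B} \times \mb{C}$ into two nested fixed points, one over $\mb{B}$ and one over $\mb{C}$; (ii) use the composition identity/parametrized dinaturality (\cref{item:main:2}) to commute the two fixed points past one another, which is precisely the step that introduces the partial solution $\sfix{G}$ and the substitution $\langle \ms{id}_{\mb{A}}, \sfix{G} \rangle$ defining $H$; and (iii) use the parameter identity (\cref{item:main:1}), which holds strictly by \cref{prop:main:13}, to collapse the resulting composites into $\sfix{H}$ and $\sfix{G} \circ \langle \ms{id}_{\mb{A}}, \sfix{H} \rangle$. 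Pairing the two components then yields the claimed value of $\sfix{\langle F, G \rangle}$.

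An alternative, more self-contained route bypasses the generic iteration-theory machinery and argues directly through canonicity. By \cref{prop:45}, $\sfix{\langle F, G \rangle}$ is characterized as the initial $\langle F, G \rangle$-algebra. I would exhibit the candidate $\langle \sfix{G} \circ \langle \ms{id}_{\mb{A}}, \sfix{H} \rangle, \sfix{H} \rangle$, equip it with the fold map assembled componentwise from the weak fixed-point isomorphisms of \cref{prop:32} for $G$ and $H$, verify that this is an $\langle F, G \rangle$-algebra, and show it is initial by building the mediating embedding into any other algebra out of the mediating embeddings for $H$ and for $G$ supplied by \cref{prop:45}. Uniqueness of the initial algebra then forces the identity.

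The hard part will be obtaining the stated equality ``$=$'' rather than a mere isomorphism. The Conway identities of \cref{theorem:main:1} hold only up to natural isomorphism, so each rewriting step in the first approach carries a natural isomorphism built from $\ms{Fold}$ and $\ms{Unfold}$; to collapse these to an honest equality I would exploit the strictness of the parameter identity (\cref{prop:main:13}) together with the concrete $\omega$-colimit construction underlying $\FIX$ and $\GFIX$, showing that the two diagrams whose \O-colimits compute the two sides coincide after reindexing. Tracking these isomorphisms and the naturality in $F$ of $\ms{Fold}$ and $\ms{Unfold}$ (\cref{prop:32})---and confirming that the dinaturality step reproduces exactly the substitution $\langle \ms{id}_{\mb{A}}, \sfix{H} \rangle$ rather than an isomorphic variant---is the principal technical obstacle.
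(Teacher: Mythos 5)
Your first route is precisely the paper's own argument: the paper gives no separate proof of \cref{prop:main:12}, justifying it only by the remark that the Conway identities established in \cref{theorem:main:1} imply the pairing identity, with a citation to Bloom and \'{E}sik for the standard equational derivation via the double-dagger, composition, and parameter identities. Your worry about obtaining a strict equality is well placed and is not resolved by the paper either: since \cref{theorem:main:1} yields the composition and double-dagger identities only up to isomorphism, the derivation delivers the pairing identity only up to natural isomorphism, and the ``$=$'' in the statement must be read accordingly --- the two sides are computed from independently chosen \O-colimits in $\mb{B} \times \mb{C}$ versus in $\mb{B}$ and $\mb{C}$ separately, so on-the-nose equality cannot be expected without coordinating those choices. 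Your alternative route through initial $\langle F, G \rangle$-algebras via \cref{prop:45} is sound and more self-contained than anything the paper offers, but it likewise characterizes the left-hand side only up to canonical isomorphism, so it does not close that gap either.
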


\section{Applications to Semantics of Session Types}
\label{sec:appl-semant}

We illustrate our results by applying them to denotational semantics for session-typed languages.
In particular, we show that they are essential both for defining and reasoning about the denotations of recursive session types.
Session types specify communication protocols between processes.
We consider the restricted setting of two processes $S$ (the server) and $C$ (the client).
They independently perform computation and communicate with each other over a wire $c$ (a channel).
This bidirectional communication on $c$ is specified by a session type $A$ that evolves over the course of execution.
We can think of $S$ as a function from its input on $c$ to its output on $c$, and of $C$ as a function from its input on $c$ to its output on $c$.
To accomplish this we imagine $c$ as a pair $(c^-, c^+)$ of wires carrying unidirectional communications: a wire $c^-$ that carries communications from $C$ to $S$, and a wire $c^+$ that carries communication from $S$ to $C$.
This gives rise to the picture \begin{tikzcd}[cramped] S \ar[r, shift left=0.5ex, "{c^+}"] & C \ar[l, shift left=0.5ex, "{c^-}"]\end{tikzcd}.

We interpret a session type $A$ as a Scott domain $\sembr{A}$ whose elements are the bidirectional communications permitted by the protocol $A$.
To interpret the picture, we decompose $\sembr{A}$ into a pair of Scott domains $\sembr{A}^-$ and $\sembr{A}^+$.
The domain $\sembr{A}^+$ contains the left-to-right unidirectional communications on $c^+$ that $A$ permits, and symmetrically for $\sembr{A}^-$.
We then interpret $S$ and $C$ as continuous functions $\sembr{S} : \sembr{A}^- \to \sembr{A}^+$ and $\sembr{C} : \sembr{A}^+ \to \sembr{A}^-$.

The decomposition of $\sembr{A}$ into $\sembr{A}^- \times \sembr{A}^+$ introduces ``semantic junk''.
Indeed, $\sembr{A}^- \times \sembr{A}^+$ contains many pairs $(a^-, a^+)$ of unidirectional communications that do not correspond to bidirectional communications $a \in \sembr{A}$.
We use an embedding $\ppi{A} : \sembr{A} \to \sembr{A}^- \times \sembr{A}^+$ to pick out the pairs $(a^-, a^+)$ that correspond to genuine bidirectional communications.

We illustrate this semantic approach by giving interpretations to recursive session types.
We note that this approach is also applicable to a rich class of session types including internal and external choice, channel transmission, synchronization, etc.\@
Due to space constraints, we do not consider their interpretations~here.

Assume recursive session types $\Trec{\alpha}{A}$ are formed using the following rules.
The judgment $\Xi \vdash {A}$ means that $A$ is a session type in the presence type variables $\Xi = \alpha_1, \dotsc, \alpha_n$ ($n \geq 0$).
\[
  \infer[\rn{CVar}]{\Xi,\alpha\vdash{\alpha}}{}
  \quad
  \infer[\rn{C$\rho$}]{
    \Xi \vdash {\Trec{\alpha}{A}}
  }{
    \Xi, {\alpha} \vdash {A}
  }
\]

To handle open types, we generalize from triples of domains and a single embedding to a 2-cell whose components are embeddings.
Let $\mb{M}$ be the \O-category of Scott domains where morphisms are strict continuous meet-preserving functions.
It supports canonical fixed points.
Let $\sembr{\alpha_1,\dotsc,\alpha_n}$ be the category $\prod_{i = 1}^n \mb{M}$.
Then $\Xi \vdash {A}$ denotes a {2-cell} $\ppi{\Xi \vdash {A}} : \sembr{\Xi \vdash {A}} \nto \sembr{\Xi \vdash {A}}^- \times \sembr{\Xi \vdash {A}}^+ : \sembr{\Xi} \to \mb{M}$ where each component of the natural transformation $\ppi{\Xi\vdash A}$ is an embedding in $\mb{M}$.
The interpretation of $\Xi \vdash A$ is defined by induction on its derivation.

The functors interpreting \rn{CVar} are projection of the $\alpha$ component:
\[
  \sembr{\Xi,\alpha\vdash{\alpha}} = \sembr{\Xi,\alpha\vdash{\alpha}}^- = \sembr{\Xi,\alpha\vdash{\alpha}}^+ = \pi_{n+1}, \quad \ppi{\Xi,\alpha\vdash{\alpha}} = \langle \ms{id}, \ms{id} \rangle.
\]
The functors interpreting $\Xi \vdash \Trec{\alpha}{A}$ are defined using \cref{prop:main:4}:
\[
  \sembr{\Xi \vdash {\Trec{\alpha}{A}}} = \sfix{\sembr{\Xi, {\alpha} \vdash {A}}}, \quad \sembr{\Xi \vdash {\Trec{\alpha}{A}}}^p = \sfix{\left(\sembr{\Xi, {\alpha} \vdash {A}}^p\right)} \; (p \in \{{-},{+}\}.
\]
Abbreviate $\Xi \vdash \Trec{\alpha}{A}$ by $\vdash \Trec{\alpha}{A}$ and $\Xi, \alpha \vdash A$ by $\alpha \vdash A$.
Set $\ppi{\alpha \vdash A}^- = \pi_1 \circ \ppi{\alpha \vdash A}$ and $\ppi{\alpha \vdash A}^+ = \pi_2 \circ \ppi{\alpha \vdash A}$.
Instantiating $\eta$ in the right diagram of \cref{prop:32} by $\ppi{\alpha \vdash A}^p$ for $p \in \{{-},{+}\}$ and expanding the definition of the horizontal composition $\eta \ast \langle \ms{id}, \sfix\eta \rangle$ gives:
\[
  \begin{tikzcd}[column sep=13em]
    \sembr{\alpha \vdash A} \circ \langle\ms{id}_{\sembr{\Xi}}, \sembr{\vdash \Trec{\alpha}{A}} \rangle
    \ar[r, Rightarrow, "{\ms{Fold}^{\sembr{\alpha \vdash A}}}"]
    \ar[d, swap, Rightarrow, "{\sembr{\alpha \vdash A}\langle \ms{id}, \sfix{(\ppi{\alpha \vdash A}^p)} \rangle}"]
    &
    \sembr{\vdash \Trec{\alpha}{A}}
    \ar[d, Rightarrow, "{\sfix{(\ppi{\alpha \vdash A}^p)}}"]
    \\
    \sembr{\alpha \vdash A} \circ \langle\ms{id}_{\sembr{\Xi}}, \sembr{\vdash \Trec{\alpha}{A}}^p \rangle
    \ar[r, Rightarrow, "{\ms{Fold}^{\sembr{\vdash \Trec{\alpha}{A}}^p} \circ \ppi{\alpha \vdash A}^p\langle\ms{id}, \sembr{\vdash \Trec{\alpha}{A}}^p \rangle}"]
    &
    \sembr{\vdash \Trec{\alpha}{A}}^p
  \end{tikzcd}
\]
The category of $\sembr{\alpha \vdash A}$-algebras has products, so there exists a mediating morphism $\langle \sfix{(\ppi{\alpha \vdash A}^-)}, \sfix{(\ppi{\alpha \vdash A}^+)} \rangle : \sembr{\vdash \Trec{\alpha}{A}} \nto \sembr{\vdash \Trec{\alpha}{A}}^- \times \sembr{\vdash \Trec{\alpha}{A}}^+$.
It is a natural family of embeddings by \cref{prop:45}, so we define:
\[
  \ppi{\vdash \Trec{\alpha}{A}} = \langle \sfix{(\ppi{\alpha \vdash A}^-)}, \sfix{(\ppi{\alpha \vdash A}^+)} \rangle.
\]

\Cref{prop:main:13} implies that these denotations respect substitution (\cf \cite[Proposition~36]{kavanagh_2020:_domain_seman_higher}):

\begin{restatable}{proposition}{propBF}
  \label{prop:main:15}
  If $\alpha_1,\dotsc,\alpha_n \vdash  A$ and $\Theta \vdash {B_i}$ for $1 \leq i \leq n$, then for $\genbr{\cdot}$ ranging in $\sembr{\cdot}$, $\sembr{\cdot}^-$ and $\sembr{\cdot}^+$, $\genbr{{\Theta \vdash {[\vec{B}/\vec{\alpha}] A}}} = \genbr{{\Xi \vdash A}} \circ \langle \genbr{{\Theta \vdash A_i}} \rangle_{1 \leq i \leq n}$.
  Moreover, $\ppi{{\Theta \vdash {[\vec{B}/\vec{\alpha}] A}}} = \ppi{{\Xi \vdash A}} \ast \langle \ppi{{\Theta \vdash A_i}} \rangle_{1 \leq i \leq n}$.
\end{restatable}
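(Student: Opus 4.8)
The plan is to proceed by structural induction on the derivation of $\alpha_1, \dotsc, \alpha_n \vdash A$, establishing the three functor equations (for $\genbr{\cdot}$ ranging over $\sembr{\cdot}$, $\sembr{\cdot}^-$, $\sembr{\cdot}^+$) and the 2-cell equation for $\ppi{\cdot}$ simultaneously. Writing $G = \langle \genbr{\Theta \vdash B_i} \rangle_{1 \leq i \leq n} : \sembr{\Theta} \to \sembr{\Xi}$ for the substitution functor (one copy for each of the three interpretations), the claim is exactly that substitution is interpreted by precomposition with $G$, and that $\ppi{\cdot}$ transforms by horizontal composition with the substitution 2-cell $\langle \ppi{\Theta \vdash B_i} \rangle_i$. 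There are two rules to treat, \rn{CVar} and \rn{C$\rho$}.

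For \rn{CVar}, the type $A$ is a variable, so each of $\sembr{A}$, $\sembr{A}^-$, $\sembr{A}^+$ is a projection and $\ppi{A} = \langle \ms{id}, \ms{id} \rangle$ is a diagonal. Substitution replaces the variable by the matching $B_j$, and both equations reduce to the universal property of the product: precomposing a projection with a tuple selects the corresponding component, and horizontally composing the diagonal with $\langle \ppi{\Theta \vdash B_i} \rangle_i$ selects $\ppi{\Theta \vdash B_j}$. This case is a direct computation making no appeal to the dagger.

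The substantive case is \rn{C$\rho$}, where $A = \Trec{\beta}{A'}$ with $\Xi, \beta \vdash A'$ and, after $\alpha$-renaming so that $\beta$ is fresh for $\vec B$, $[\vec B / \vec \alpha] A = \Trec{\beta}{[\vec B/\vec\alpha]A'}$. Applying the induction hypothesis to $A'$ in the extended context $\Xi, \beta$, with the substitution extended by the identity on the $\beta$-component (using that weakening is interpreted by a projection), yields $\genbr{\Theta, \beta \vdash [\vec B/\vec\alpha] A'} = \genbr{\Xi, \beta \vdash A'} \circ (G \times \ms{id}_{\mb M})$. This is exactly the functor $(\genbr{\Xi, \beta \vdash A'})_G$ in the notation of \cref{prop:main:13}, so the parameter identity \cref{eq:main:10} gives $\sfix{(\genbr{\Xi, \beta \vdash A'})_G} = \sfix{\genbr{\Xi, \beta \vdash A'}} \circ G$; by the definition of $\sembr{\Trec{\beta}{A'}}$ and its $\pm$ variants as $\sfix{(\genbr{\Xi, \beta \vdash A'})}$, this is precisely the functor equation. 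For the 2-cell, the induction hypothesis gives $\ppi{\Theta, \beta \vdash [\vec B/\vec\alpha] A'} = \ppi{\Xi, \beta \vdash A'} \ast (\langle \ppi{\Theta \vdash B_i} \rangle_i \times \ms{id})$; taking $p$-components for $p \in \{{-},{+}\}$ and applying the naturality of the dagger in its functor argument, \cref{eq:main:14}, transports each $\sfix{(\ppi{\beta \vdash A'}^p)}$ through the substitution, and the definition of $\ppi{\vdash \Trec{\beta}{A'}}$ as the mediating pairing $\langle \sfix{(\ppi{\beta\vdash A'}^-)}, \sfix{(\ppi{\beta \vdash A'}^+)} \rangle$, together with distribution of horizontal composition over pairings, delivers the claimed 2-cell equation.

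The main obstacle is the bookkeeping in the \rn{C$\rho$} 2-cell case: one must verify that substitution into a recursive type corresponds, at the level of 2-cells, to the horizontal composite $\phi \ast (\gamma \times \ms{id})$ with $\phi = \ppi{\Xi, \beta \vdash A'}^p$ and $\gamma = \langle \ppi{\Theta \vdash B_i} \rangle_i$, so that \cref{eq:main:14} applies componentwise, and that the resulting pairing of daggers coincides with $\ppi{\cdot}$ of the substituted type. The interplay between the $\mb M$-valued interpretation $\sembr{\cdot}$ and its $\pm$-decomposition, and the requirement that $\langle \sfix{(\ppi{\cdot}^-)}, \sfix{(\ppi{\cdot}^+)} \rangle$ be the embedding supplied by \cref{prop:45}, is where care is needed; the functor equations and the entire \rn{CVar} case are routine.
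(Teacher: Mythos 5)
Your proposal is correct and matches the paper's intended argument: the paper offers no written-out proof of \cref{prop:main:15} beyond the remark that it follows from \cref{prop:main:13}, and your structural induction --- trivial \rn{CVar} case, then \cref{eq:main:10} for the three functor equations and \cref{eq:main:14} (plus distribution of horizontal composition over pairings) for the 2-cell equation in the \rn{C$\rho$} case --- is exactly the elaboration of that remark. The only point you should make explicit is the weakening lemma needed to extend the substitution to the context $\Theta, \beta$, which you already flag.
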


\Cref{prop:main:15,prop:32} imply that the above denotations respect syntactic folding and unfolding of recursive up-to-isomorphism, \ie,
\[
  \sembr{\Xi \vdash \Trec{\alpha}{A}} \cong \sembr{\Xi, \alpha \vdash A} \circ \langle \ms{id}_{\sembr{\Xi}}, \ppi{\Xi \vdash \Trec{\alpha}{A}} \rangle = \sembr{\Xi \vdash [\Trec{\alpha}{A}/\alpha] A}.
\]

\section{Related Work}
\label{sec:related-work}

\textcite{lehmann_smyth_1977:_data_types,lehmann_smyth_1981:_algeb_specif_data_types} introduced the idea of interpreting datatypes as initial fixed points of $\omega$-cocontinuous functors on $\omega$-cocomplete categories with initial objects.
Their constructions for initial and parametrized fixed points of functors generalize $\FIX$ and \cref{prop:main:4} to their setting.
By specializing their construction to \O-categories, we give an explicit recipe for constructing these fixed points and their associated initial algebras.

\textcite{wand_1977:_fixed_point_const} introduced the definitions of \O-categories and locally continuous functors.
\textcite{smyth_plotkin_1977:_categ_theor_solut,smyth_plotkin_1982:_categ_theor_solut} introduced \O-(co)limits and generalized Scott's limit-colimit coincidence theorem to \O-categories.

Iteration theories~\cite{bloom_esik_1993:_iterat_theor} were introduced to study the syntax and semantics of flowchart algorithms.
Iteration theories are defined in terms of a dagger operation.
\textcite{bloom_esik_1996:_fixed_point_operat} studied external dagger operations on cartesian closed categories and showed that many of the categories used in semantics, the least fixed point operator induces a dagger operation satisfying the Conway identities.
They generalized this work to cartesian closed 2-categories in \cite{bloom_esik_1995:_some_equat_laws_initial}  and gave sufficient conditions for a dagger on horizontal morphisms to satisfy the Conway identities.
They did not consider the action of daggers on vertical~morphisms.

\textcite{simpson_plotkin_2000:_compl_axiom_categ} gave an axiomatic treatment of dagger operations satisfying Conway identities.
They gave a purely syntactic account of free iteration theories.
They give a precise characterization of the circumstances in which the iteration theory axioms are complete for categories with an iteration operator.

\textcite{fiore_1994:_axiom_domain_theor} investigated axiomatic categorical domain theory for application to the denotational semantics of deterministic programming languages.
Chapter~6 defines a dagger operation on functors between certain algebraically complete \O-categories.
This dagger satisfies the parameter identity on functors, \ie, it satisfies \cref{eq:main:10} above.
Under certain conditions, this dagger operation extends to the functor given by \cref{prop:main:4} (\cf~\cite[130--131]{fiore_1994:_axiom_domain_theor}).
Our category {\CFP} appears as the category $\mb{Kind}$~\cite[Definition~7.3.11]{fiore_1994:_axiom_domain_theor}.

\textcite{honda_1993:_types_dyadic_inter,takeuchi_1994:_inter_based_languag} introduced session types to describe sessions of interaction.
\textcite{caires_pfenning_2010:_session_types_intuit_linear_propos} observed a proofs-as-programs correspondence between the session-typed $\pi$-calculus and intuitionistic linear logic.
\textcite{kavanagh_2020:_domain_seman_higher} gave the first denotational semantics for a language with session-typed concurrency and general recursion.

\section{Conclusion and Acknowledgments}
\label{sec:concl-ackn}

We gave a functorial dagger operation that satisfies the Conway identities and that is defined both on functors and natural transformations.
We also proved various order-theoretic properties about the dagger operation.
In \cref{sec:appl-semant}, we saw that the Conway identities and the dagger operation's order-theoretic properties were essential for defining the semantics of recursive session types.

This work is funded in part by a Natural Sciences and Engineering Research Council of Canada Postgraduate Scholarship.
The author thanks Stephen Brookes and Frank Pfenning for their comments.

\printbibliography

\appendix

\section{General Results on \O-categories}
\label{sec:general-results-o-cats}

In this section, we present various results concerning e-p-pairs, locally continuous functors, and colimits in \O-categories.
Many of these results are standard and we present them only for ease of reference.

An \O-category $\mb{K}$ has \defin{locally determined $\omega$-colimits of embeddings}~\cite[Definition~7]{smyth_plotkin_1982:_categ_theor_solut} if for all $\omega$-chains $\Delta$ in $\mb{K}^e$ and cocones $\kappa : \Delta \nto A$ in $\mb{K}^e$, $\kappa$ is colimiting in $\mb{K}^e$ if and only if $\kappa$ is an \O-colimit.

We frequently need to deal with cocones, morphisms of cocones, and colimits.
It is useful to introduce some terminology to make their structure explicit.

There exists~\cite[Definition~3.1.5]{riehl_2016:_categ_theor_contex} a functor $\Cone(F, {-}) : \mb{C} \to \mb{Set}$ taking an object $C$ of $\mb{C}$ to the set of cocones under $F$ with nadir $C$.
Given a morphism $f : C \to C'$ and a cocone $(\lambda : F \nto C) \in \Cone(F, C)$, $\Cone(F, f)(\lambda) = f \circ \lambda$.

Given a functor $F : \mb{C} \to \mb{Set}$, the \defin{category of elements} $\cel F$ has as objects pairs $(c, C)$ where $C$ is an object of $\mb{C}$ and $c \in FC$.
A morphism $f : (c, C) \to (c', C')$ is a morphism $f : C \to C'$ in $\mb{C}$ such that $F(f)(c) = c'$.

Given a diagram $F : J \to \mb{C}$, the \defin{category of cocones on $F$} is the category of elements $\cel{\Cone(F, {-})}$.
The \defin{colimit} of $F$ is defined to be the initial object of $\cel{\Cone(F, {-})}$~\cite[Definition~3.1.6]{riehl_2016:_categ_theor_contex}.

\subsection{Locally Continuous Functors}
\label{sec:locally-cont-funct}

\begin{lemma}[{\cites[Proposition~5.2.4]{abramsky_jung_1995:_domain_theor}[Lemma~4 and Theorem~3]{smyth_plotkin_1982:_categ_theor_solut}}]
  \label{lemma:15}
  Let $F : \mb{K} \to \mb{L}$ be a locally continuous functor on $\mb{O}$-categories $\mb{K}$ and $\mb{L}$.
  Then $F$ preserves embeddings and projections, \ie, $F$ restricts to functors $F^e : \mb{K}^e \to \mb{L}^e$ and $F^p : \mb{K}^p \to \mb{L}^p$.
  Moreover, $F(e)^p = F\left(e^p\right)$ and $F(p)^e = F\left(p^e\right)$.
  The restriction $F^e$ is $\omega$-cocontinuous when both $\mb{K}$ and $\mb{L}$ have locally determined $\omega$-colimits of embeddings.
\end{lemma}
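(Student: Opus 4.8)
The plan is to prove \cref{lemma:15} in three stages, handling preservation of embeddings and projections first, then the compatibility identities, then $\omega$-cocontinuity.

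\textbf{Stage 1: Preservation of e-p-pairs.} Suppose $(e, p)$ is an e-p-pair with $e : D \to E$ and $p : E \to D$, so that $p \circ e = \ms{id}_D$ and $e \circ p \sqsubseteq \ms{id}_E$. First I would show $(Fe, Fp)$ is an e-p-pair. Functoriality immediately gives $Fp \circ Fe = F(p \circ e) = F(\ms{id}_D) = \ms{id}_{FD}$, so the first equation holds without using continuity. For the second, I would apply the functor action to the inequality $e \circ p \sqsubseteq \ms{id}_E$. Here is the one place where local continuity—or at least local monotonicity—is essential: since $F : \mb{K}(E,E) \to \mb{L}(FE,FE)$ is continuous, it is in particular monotone, so $F(e \circ p) \sqsubseteq F(\ms{id}_E)$, that is, $Fe \circ Fp \sqsubseteq \ms{id}_{FE}$. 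Thus $Fe$ is an embedding with projection $Fp$, and $F$ restricts to $F^e : \mb{K}^e \to \mb{L}^e$ and $F^p : \mb{K}^p \to \mb{L}^p$.

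\textbf{Stage 2: The compatibility identities.} The identities $F(e)^p = F(e^p)$ and $F(p)^e = F(p^e)$ then follow purely formally from the uniqueness of adjoints. Recall that the projection associated to an embedding is unique: if $e$ has projection $p$, then $p$ is the only morphism making $(e,p)$ an e-p-pair. In Stage 1 we exhibited $Fp = F(e^p)$ as a projection for the embedding $Fe = F(e)$; by uniqueness, $F(e^p) = F(e)^p$. The dual identity $F(p)^e = F(p^e)$ follows symmetrically, using that the embedding associated to a projection is likewise unique.

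\textbf{Stage 3: $\omega$-cocontinuity of $F^e$.} This is the substantive part and where I expect the main work to lie. Assume both $\mb{K}$ and $\mb{L}$ have locally determined $\omega$-colimits of embeddings. Let $\Delta : \omega \to \mb{K}^e$ be an $\omega$-chain and $\kappa : \Delta \nto A$ a colimiting cocone in $\mb{K}^e$. I must show $F^e \kappa : F^e \Delta \nto F^e A$ is colimiting in $\mb{L}^e$. The strategy is to pass through the characterization of colimits as \O-colimits. Since $\mb{K}$ has locally determined $\omega$-colimits of embeddings, $\kappa$ being colimiting in $\mb{K}^e$ means exactly that it is an \O-colimit, \ie, $\dirsup_{n} \kappa_n \circ \kappa_n^p = \ms{id}_A$. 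Applying $F$ and using continuity to commute $F$ past the directed supremum, together with the Stage~2 identities, I compute
\[
  \dirsup_{n} (F\kappa_n) \circ (F\kappa_n)^p = \dirsup_{n} F\kappa_n \circ F(\kappa_n^p) = \dirsup_{n} F(\kappa_n \circ \kappa_n^p) = F\!\left(\dirsup_{n} \kappa_n \circ \kappa_n^p\right) = F(\ms{id}_A) = \ms{id}_{FA}.
\]
Thus $F\kappa$ is an \O-colimit in $\mb{L}$. Since $\mb{L}$ also has locally determined \O-colimits of embeddings, being an \O-colimit is equivalent to being colimiting in $\mb{L}^e$, so $F^e \kappa$ is colimiting, as required. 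The main obstacle, and the step deserving care, is the interchange of $F$ with the directed supremum: this is precisely local continuity applied to the hom-dcpo $\mb{K}(A,A) \to \mb{L}(FA,FA)$, and one must verify that $(\kappa_n \circ \kappa_n^p)_n$ is indeed the ascending chain whose supremum continuity governs—which is guaranteed by the definition of \O-colimit.
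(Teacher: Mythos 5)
Your proof is correct, and Stages 1 and 2 match the paper's argument exactly: functoriality gives $Fp \circ Fe = \ms{id}$, local monotonicity gives $Fe \circ Fp \sqsubseteq \ms{id}$, and the compatibility identities follow from the uniqueness of the projection/embedding associated to an embedding/projection. The only divergence is Stage 3: the paper simply cites Theorem~3 of Smyth--Plotkin for the $\omega$-cocontinuity of $F^e$, whereas you prove it directly by translating ``colimiting in $\mb{K}^e$'' to ``\O-colimit'' via the locally-determined hypothesis, pushing the supremum $\dirsup_n \kappa_n \circ \kappa_n^p = \ms{id}_A$ through $F$ by local continuity, and translating back in $\mb{L}^e$; this inlined argument is sound (you correctly note that the ascending-chain condition is preserved by local monotonicity together with the Stage~2 identities) and is essentially the standard proof of the cited theorem, so nothing is gained or lost beyond self-containedness.
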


\begin{proof}
  We begin by showing that $F$ preserves embeddings and projections.
  Let $C$ and $D$ be arbitrary objects and let $e : C \to D$ be an embedding with associated projection $p : D \to C$.
  Then $p \circ e = \ms{id}_C$, so by functoriality,
  \[
    F(p) \circ F(e) = F(p \circ e) = F(\ms{id}_C) = \ms{id}_{FC}.
  \]
  By definition, $F$ induces a continuous map $[D \to C] \to [FD \to FC]$, and continuous maps are monotone.
  Because $e \circ p \sqsubseteq \ms{id}_D$, we get by monotonicity
  \[
    F(e) \circ F(p) = F(e \circ p) \sqsubseteq F(\ms{id}_D) = \ms{id}_{FD}.
  \]
  We conclude that $F(e)$ is an embedding with associated projection $F(p)$.
  Because embeddings and projections uniquely determine each other, $F(e)^p = F(p) = F(e^p)$ and $F(p)^e = F(e) = F(p^e)$.
  If $\mb{K}$ and $\mb{L}$ have locally determined $\omega$-colimits of embeddings, then the restriction $F^e$ is $\omega$-cocontinuous by \cite[Theorem~3]{smyth_plotkin_1982:_categ_theor_solut}.\qed
\end{proof}

\subsection{Cartesian Closure of \O}
\label{sec:cartesian-closure-o}

\begin{proposition}
  \label{prop:main:7}
  The category {\O} is a cartesian 2-category~\cite[96]{bloom_esik_1995:_some_equat_laws_initial} and this structure is inherited from $\mb{Cat}$.
  Explicitly, its terminal object $\mb{1}$ is the one-object category.
  The categorical product is given by the product of categories~\cite[Definition~1.3.12]{riehl_2016:_categ_theor_contex}.
\end{proposition}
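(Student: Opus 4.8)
The plan is to show that the finite 2-products of $\mb{Cat}$ restrict to \O. Because \O is locally full in $\mb{Cat}$---its 2-cells between any two locally continuous functors are exactly the natural transformations between them---the two-dimensional universal properties will follow from those in $\mb{Cat}$ as soon as we check that the candidate objects lie in \O and that the structural functors are locally continuous.

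First I would verify that $\mb{1}$ and $\mb{K} \times \mb{L}$ are \O-categories. The unique hom-set of $\mb{1}$ is a singleton, hence trivially a dcpo with trivially continuous composition, so $\mb{1}$ is an \O-category; as it is terminal in $\mb{Cat}$, the unique functor into it from any \O-category is vacuously locally continuous, making $\mb{1}$ 2-terminal in \O (its hom-category into $\mb{1}$ is the terminal category). For binary products, the hom-set $(\mb{K} \times \mb{L})((K_1, L_1), (K_2, L_2))$ is the product poset $\mb{K}(K_1, K_2) \times \mb{L}(L_1, L_2)$ ordered componentwise, in which directed suprema are computed componentwise and hence exist; composition, being componentwise, is then continuous. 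Thus $\mb{K} \times \mb{L}$ is an \O-category.

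Next I would check the structural 1-cells are locally continuous. Each projection $\pi_i$ acts on hom-sets by taking a component of a morphism, which is continuous precisely because directed suprema in the product are computed componentwise; and given locally continuous $F : \mb{C} \to \mb{K}$ and $G : \mb{C} \to \mb{L}$, the pairing $\langle F, G \rangle$ sends $h$ to $(Fh, Gh)$, again continuous since $F$ and $G$ are locally continuous and suprema are componentwise. More generally, a functor $H : \mb{C} \to \mb{K} \times \mb{L}$ is locally continuous iff both $\pi_1 H$ and $\pi_2 H$ are.

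With the objects and 1-cells confirmed to lie in \O, the remaining universal property is transported from $\mb{Cat}$: the canonical isomorphism of hom-categories $\mb{Cat}(\mb{C}, \mb{K} \times \mb{L}) \cong \mb{Cat}(\mb{C}, \mb{K}) \times \mb{Cat}(\mb{C}, \mb{L})$ restricts along the locally full inclusion of \O---which on 1-cells selects exactly the locally continuous functors by the equivalence just noted, and is the identity on 2-cells---to the corresponding isomorphism of hom-categories in \O. This exhibits $\mb{K} \times \mb{L}$ as the 2-product, and the analogous, easier argument handles $\mb{1}$. The only real content is the preservation of local continuity by the product structure, which in every instance reduces to the fact that directed suprema in a product of dcpos are computed componentwise; everything else is the transport of the $\mb{Cat}$ universal property across the locally full inclusion.
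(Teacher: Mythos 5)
Your proposal is correct and follows essentially the same route as the paper's proof: verify that $\mb{1}$ and $\mb{A} \times \mb{B}$ are \O-categories via componentwise dcpo structure, check that projections and pairings are locally continuous, and inherit uniqueness of the mediating data from $\mb{Cat}$. The only difference is packaging---the paper explicitly constructs the mediating 2-cell $\langle \alpha, \beta \rangle$ component by component, whereas you transport the hom-category isomorphism across the locally full inclusion $\O \hookrightarrow \mb{Cat}$---but the mathematical content (componentwise suprema doing all the work) is identical.
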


\begin{proof}
  Its terminal $\mb{1}$ is the one-object category.
  The homset $\mb{1}(\bullet, \bullet)$ is the dcpo $\{ \ms{id}_{\bullet} \}$.
  So $\mb{1}$ is an \O-category.

  The product structure on {\O} is given by the product of categories~\cite[Definition~1.3.12]{riehl_2016:_categ_theor_contex}.
  Let $\mb{A}$ and $\mb{B}$ be arbitrary small {\O} categories.
  The product of small categories is again small, so $\mb{A} \times \mb{B}$ is small.
  We claim that it is an \O-category.
  Every homset is a dcpo.
  By definition of product categories, $(\mb{A} \times \mb{B})((A, B), (A', B')) = \mb{A}(A,  A') \times \mb{B}(B, B')$.
  The dcpo structure on $(\mb{A} \times \mb{B})((A, B), (A', B'))$ is given by the product of dcpos $\mb{A}(A, A') \times \mb{B}(B, B')$.
  Composition of morphisms in $\mb{A} \times \mb{B}$ is given component-wise, and so is again continuous.
  So $\mb{A} \times \mb{B}$ is an \O-category.

  To show that this product is the categorical product in \O, we must show that the projection morphisms and that the mediating 2-cell exists in \O.
  Consider the projection functor $\pi_{\mb{A}} : \mb{A} \times \mb{B} \to \mb{A}$.
  Its action on morphisms $(A, B) \to (A', B')$ is given by the projection $\mb{A}(A, A') \times \mb{B}(B, B') \to \mb{A}(A, A')$ in $\mb{DCPO}$.
  This projection is continuous, so we conclude that $\pi_{\mb{A}}$ is locally continuous.
  A symmetric argument gives that $\pi_{\mb{B}} : \mb{A} \times \mb{B} \to \mb{B}$ is locally continuous.

  Now let $\mb{C}$ be a small \O-category and consider 2-cells $\alpha : A_1 \nto A_2 : \mb{C} \to \mb{A}$ and $\beta : B_1 \nto B_2 : \mb{C} \to \mb{B}$.
  We must show that there exists a unique 2-cell $\langle \alpha, \beta \rangle : \mb{C} \to \mb{A} \times \mb{B}$ such that $\pi_{\mb{A}} \circ \langle \alpha, \beta \rangle = \alpha$ and $\pi_{\mb{B}} \circ \langle \alpha, \beta \rangle = \beta$.
  We first show that the horizontal morphisms $\langle A_i, B_i \rangle : \mb{C} \to \mb{A} \times \mb{B}$ exist for $i = 1, 2$.
  They are given by $\langle A_i, B_i \rangle C = (A_iC, B_iC)$ for objects $C$ of $\mb{C}$, and $\langle A_i, B_i \rangle f = (A_if, B_if)$ for morphisms $f : C \to C'$ of $\mb{C}$.
  The map
  \[
    f \mapsto (A_if, B_if) : \mb{C}(C, C') \to \mb{A}(A_iC, A_iC') \times \mb{B}(B_iC, B_iC')
  \]
  is exactly the mediating morphism given by the product of homsets in $\mb{DCPO}$, so it is continuous.
  We conclude that $\langle A_i, B_i \rangle$ is locally continuous, so exists in \O.
  The 2-cell $\langle \alpha, \beta \rangle$ is then the natural transformation $\langle \alpha, \beta \rangle_C = \langle \alpha_C, \beta_C \rangle : (A_1C, B_1C) \to (A_2C, B_2C)$.
  Its uniqueness is inherited from $\mb{Cat}$.
  We conclude that {\O} is a cartesian 2-category.\qed
\end{proof}

\begin{proposition}
  \label{prop:main:11}
  If $\mb{A}$ and $\mb{B}$ are \O-categories and $\mb{B}$ is small, then $[\mb{A} \lcto \mb{B}]$ is a small \O-category.
\end{proposition}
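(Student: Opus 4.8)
The plan is to verify the two halves of the claim separately: first that $[\mb{A} \lcto \mb{B}]$ carries the structure of an \O-category, and then that it is small. The substantive part is the \O-category structure, since that is where the order-theoretic hypotheses on $\mb{B}$ are used; smallness is essentially bookkeeping.

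First I would fix the order on each homset. Given locally continuous functors $F, G : \mb{A} \to \mb{B}$, I order the natural transformations $F \nto G$ componentwise, declaring $\eta \sqsubseteq \theta$ iff $\eta_A \sqsubseteq \theta_A$ in $\mb{B}(FA, GA)$ for every object $A$ of $\mb{A}$; this is meaningful because each such homset is a dcpo. To see this poset is a dcpo, take a directed family $S$ of natural transformations $F \nto G$ and define a candidate supremum componentwise by $\eta_A = \dirsup_{\theta \in S} \theta_A$, using that $\{\theta_A : \theta \in S\}$ is directed in the dcpo $\mb{B}(FA, GA)$. The key step, which I expect to be the main obstacle, is checking that this componentwise supremum is again a natural transformation. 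For a morphism $f : A \to A'$ of $\mb{A}$, each $\theta \in S$ satisfies the naturality square $\theta_{A'} \circ Ff = Gf \circ \theta_A$; applying continuity of composition in the \O-category $\mb{B}$ to commute $\dirsup$ past precomposition with $Ff$ and postcomposition with $Gf$ yields $\eta_{A'} \circ Ff = \dirsup_{\theta}(\theta_{A'} \circ Ff) = \dirsup_{\theta}(Gf \circ \theta_A) = Gf \circ \eta_A$, so $\eta$ is natural. It is then immediate that $\eta$ is the least upper bound of $S$ in the componentwise order, so every homset is a dcpo. Note that only the \O-category structure of $\mb{B}$ is used here, not local continuity of $F$ or $G$.

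Next I would check that vertical composition is continuous. Composition is computed componentwise, $(\rho \circ \eta)_A = \rho_A \circ \eta_A$, and directed suprema of natural transformations are computed componentwise by the previous step; hence continuity of composition in $[\mb{A} \lcto \mb{B}]$ reduces to continuity of composition in each $\mb{B}(\cdot,\cdot)$, which holds because $\mb{B}$ is an \O-category. If one wishes to argue joint rather than separate continuity, it suffices to invoke the standard fact that a monotone map out of a product of dcpos that is continuous in each argument is continuous. This establishes that $[\mb{A} \lcto \mb{B}]$ is an \O-category.

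Finally, for smallness I would argue that the objects and the morphisms each form a set. A functor $\mb{A} \to \mb{B}$ is uniquely determined by its assignments on objects and on morphisms, so the locally continuous functors embed into the collection of pairs of functions $\ob\mb{A} \to \ob\mb{B}$ and $\mathrm{mor}\,\mb{A} \to \mathrm{mor}\,\mb{B}$; since $\mb{B}$ is small and $\mb{A}$ has a set of objects and a set of morphisms, this collection is a set, hence so is its subcollection of objects of $[\mb{A} \lcto \mb{B}]$. Likewise each homset $[\mb{A} \lcto \mb{B}](F,G)$ embeds into the set $\prod_{A} \mb{B}(FA, GA)$ indexed by the objects of $\mb{A}$, so it is a set and the totality of morphisms is a set. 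Therefore $[\mb{A} \lcto \mb{B}]$ is a small \O-category.
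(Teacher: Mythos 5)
Your proof is correct and follows essentially the same route as the paper's: the homsets are ordered and given directed suprema componentwise, and smallness is inherited from the ordinary functor category. You are in fact more thorough than the paper, which asserts the componentwise suprema and the \O-category structure without checking that the componentwise supremum of a directed family of natural transformations is again natural or that vertical composition is continuous — both of which you verify correctly using continuity of composition in $\mb{B}$.
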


\begin{proof}
  The functor category $[\mb{A} \to \mb{B}]$ is small whenever $\mb{B}$ is small.
  The category $[\mb{A} \lcto \mb{B}]$ is a subcategory of $[\mb{A} \to \mb{B}]$, so it too is small.
  Let $F, G : \mb{A} \to \mb{B}$ be two locally continuous functors.
  Then the homset $[\mb{A} \lcto \mb{B}](F,G)$ is again a dcpo.
  Indeed, given two natural transformations $\eta, \rho : F \nto G$, $\eta \sqsubseteq \rho$ if $\eta_A \sqsubseteq \rho_A$ in $\mb{B}(FA, GA)$ for all objects $A$ of $\mb{A}$.
  Directed suprema of directed sets in $[\mb{A} \lcto \mb{B}](F,G)$ are given component-wise, \ie, if $\Gamma$ is a directed set in $[\mb{A} \lcto \mb{B}](F,G)$, then $(\dirsup \Gamma)_A = \dirsup_{\gamma \in \Gamma} \gamma_A$.
  So $[\mb{A} \lcto \mb{B}]$ is an \O-category.\qed
\end{proof}

\begin{lemma}
  \label{lemma:main:6}
  Let $\mb{A}$, $\mb{B}$, and $\mb{C}$ be \O-categories and assume $\mb{B}$ and $\mb{C}$ are small.
  The composition functor ${\circ} : [\mb{B} \lcto \mb{C}] \times [\mb{A} \lcto \mb{B}] \to [\mb{A} \lcto \mb{C}]$ given by ${\circ}(F, G) = FG$ on objects and ${\circ}(\eta, \rho) = \eta \ast \rho$ on morphisms is locally continuous.
\end{lemma}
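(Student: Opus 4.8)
The plan is to verify two things: that $\circ$ is a well-defined functor, and that it is locally continuous, with the latter being the only nonroutine point. First I would confirm well-definedness. On objects, $FG$ is locally continuous because its action on a homset $\mb{A}(A_1, A_2) \to \mb{C}(FGA_1, FGA_2)$ factors as the action of $G$ followed by that of $F$, a composite of continuous maps, hence continuous. On morphisms, $\eta \ast \rho$ is the usual horizontal composite, so it is a natural transformation $FG \nto F'G'$. Functoriality of $\circ$ --- preservation of identities ($\ms{id}_F \ast \ms{id}_G = \ms{id}_{FG}$) and of composition (the middle-four interchange law $(\eta' \circ \eta) \ast (\rho' \circ \rho) = (\eta' \ast \rho') \circ (\eta \ast \rho)$, matching the componentwise composition in the product category) --- is inherited from $\mb{Cat}$.

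For local continuity, I would first record the ambient structure. By \cref{prop:main:11} the categories $[\mb{B} \lcto \mb{C}]$ and $[\mb{A} \lcto \mb{C}]$ are small \O-categories, and by \cref{prop:main:7} so is the source $[\mb{B} \lcto \mb{C}] \times [\mb{A} \lcto \mb{B}]$; moreover directed suprema of natural transformations are computed componentwise. Fixing objects $(F, G)$ and $(F', G')$ of the source, the action of $\circ$ on the homset is the map $(\eta, \rho) \mapsto \eta \ast \rho$ from $[\mb{B} \lcto \mb{C}](F, F') \times [\mb{A} \lcto \mb{B}](G, G')$ to $[\mb{A} \lcto \mb{C}](FG, F'G')$. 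Since suprema in the target are pointwise, it suffices to check continuity of each component map $(\eta, \rho) \mapsto (\eta \ast \rho)_A$ for every object $A$ of $\mb{A}$.

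Here I would use one of the two equal expressions for the horizontal composite, $(\eta \ast \rho)_A = F'(\rho_A) \circ \eta_{GA}$, and exhibit the component map as a composite of continuous maps:
\[
  (\eta, \rho) \longmapsto (\eta_{GA},\, F'(\rho_A)) \longmapsto F'(\rho_A) \circ \eta_{GA}.
\]
The first map is continuous: the component projections $\eta \mapsto \eta_{GA}$ and $\rho \mapsto \rho_A$ are continuous because suprema are pointwise, and $F'$ is continuous on homsets by local continuity. The second map is composition in the \O-category $\mb{C}$, which is continuous by definition of an \O-category. A composite of continuous maps is continuous, so the component map is continuous, and therefore so is the action of $\circ$ on homsets.

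The only step requiring care is this last continuity argument, since it is where the hypotheses genuinely enter: local continuity of $F'$ converts an inner directed supremum $\dirsup_i (\rho_i)_A$ into $\dirsup_i F'\!\left((\rho_i)_A\right)$, while continuity of composition in $\mb{C}$ handles the outer one. Everything else is bookkeeping inherited from $\mb{Cat}$ and from the pointwise description of suprema in functor categories supplied by \cref{prop:main:11,prop:main:7}.
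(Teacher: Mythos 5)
Your proof is correct and uses essentially the same ingredients as the paper's: the explicit formula for the horizontal composite, continuity of composition in $\mb{C}$, and local continuity of one of the functors (you invoke it for $F'$ via $F'(\rho_A)\circ\eta_{GA}$, the paper for the other functor via the other of the two equal expressions). The only difference is presentational — you reduce to components and argue by composing continuous maps, whereas the paper runs the equivalent equational computation on whiskered natural transformations — so the two proofs coincide in substance.
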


\begin{proof}
  Let $F_1, F_2 : \mb{B} \to \mb{C}$ and $G_1, G_2 : \mb{A} \to \mb{B}$ be functors.
  Let $\Phi$ and $\Gamma$ respectively be directed sets of natural transformations in the dcpos ${[\mb{B} \lcto \mb{C}](F_1, F_2)}$ and ${[\mb{A} \lcto \mb{B}](G_1, G_2)}$.
  We must show that $\dirsup {\circ}(\Phi, \Gamma) = {\circ}(\dirsup \Phi, \dirsup \Gamma)$.
  Because $\mb{C}$ is an \O-category, composition in $\mb{C}$ is continuous.
  We use the fact that $F_1$ is locally continuous to compute that:
  \begin{align*}
    &\dirsup {\circ}(\Phi, \Gamma)\\
    &= \dirsup_{(\phi, \gamma) \in \Phi \times \Gamma} {\circ}(\phi, \gamma)\\
    &= \dirsup_{\phi \in \Phi} \dirsup_{\gamma \in \Gamma} {\circ}(\phi, \gamma)\\
    &= \dirsup_{\phi \in \Phi} \dirsup_{\gamma \in \Gamma} \phi \ast \gamma\\
    &= \dirsup_{\phi \in \Phi} \dirsup_{\gamma \in \Gamma} \gamma G_2 \circ F_1\phi\\
    &= \left(\dirsup_{\gamma \in \Gamma} \gamma G_2\right) \circ \left(\dirsup_{\phi \in \Phi} F_1\phi\right)\\
    &= \left(\dirsup_{\gamma \in \Gamma} \gamma \right)G_2 \circ F_1\left(\dirsup_{\phi \in \Phi}\phi\right)\\
    &= \left(\dirsup_{\gamma \in \Gamma} \gamma \right) \ast \left(\dirsup_{\phi \in \Phi}\phi\right)\\
    &= {\circ}(\dirsup \Phi, \dirsup \Gamma).
  \end{align*}
  This is what we wanted to show.\qed
\end{proof}

\begin{lemma}
  \label{lemma:main:7}
  Let $\mb{A}$ and $\mb{B}$ be \O-categories and assume $\mb{B}$ is small.
  The evaluation functor $\ms{ev}_{\mb{A},\mb{B}} : [\mb{A} \lcto \mb{B}] \times \mb{A} \to \mb{B}$ is locally continuous.
  It is given by $(F, A) \mapsto FA$ on objects and $(\eta, f) \mapsto \eta \ast f$ on morphisms.
\end{lemma}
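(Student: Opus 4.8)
The plan is to mirror the proof of \cref{lemma:main:6}, since evaluation is structurally the same calculation as horizontal composition, with the second argument now a morphism $f : A \to A'$ of $\mb{A}$ in place of a natural transformation. Functoriality of $\ms{ev}_{\mb{A},\mb{B}}$ is inherited from $\mb{Cat}$ (the interchange law makes the two presentations $\eta_{A'} \circ Ff = Gf \circ \eta_A$ of the horizontal composite agree), so the only real content is local continuity. Fix objects $(F, A)$ and $(G, A')$ of $[\mb{A} \lcto \mb{B}] \times \mb{A}$. By \cref{prop:main:7} the relevant homset is the product dcpo $[\mb{A} \lcto \mb{B}](F, G) \times \mb{A}(A, A')$, so it suffices to show that the assignment $(\eta, f) \mapsto \eta \ast f$ into $\mb{B}(FA, GA')$ preserves directed suprema.

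First I would record the three continuity ingredients that drive the calculation: composition in $\mb{B}$ is continuous because $\mb{B}$ is an \O-category; the map $f \mapsto Ff$ is continuous because $F$ is locally continuous; and, crucially, evaluation of a natural transformation at a fixed component, $\eta \mapsto \eta_{A'}$, is continuous because by \cref{prop:main:11} directed suprema in $[\mb{A} \lcto \mb{B}]$ are computed componentwise.

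With these in hand, I would take directed sets $\Phi \subseteq [\mb{A} \lcto \mb{B}](F, G)$ and $\Gamma \subseteq \mb{A}(A, A')$ and compute $\dirsup_{(\eta, f) \in \Phi \times \Gamma} \eta \ast f$ by rewriting $\eta \ast f$ as $\eta_{A'} \circ Ff$, splitting the joint supremum into an iterated one, and pushing the supremum through composition (continuity of $\circ$ in each argument), through $F$ (local continuity), and into the component (\cref{prop:main:11}), landing on $(\dirsup \Phi)_{A'} \circ F(\dirsup \Gamma) = (\dirsup \Phi) \ast (\dirsup \Gamma)$. This is exactly the chain of equalities used for \cref{lemma:main:6}, specialized so that the second factor is a variable object rather than a second functor.

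I do not expect a genuine obstacle here: the computation is routine once the componentwise-supremum fact (\cref{prop:main:11}) is invoked to justify that $\eta \mapsto \eta_{A'}$ is continuous. The only point requiring a word of care is the reduction from arbitrary directed sets in the product dcpo to the product-shaped directed sets $\Phi \times \Gamma$, which is the standard fact that a monotone map out of a product of dcpos is continuous iff it is continuous in each argument separately; the displayed calculation establishes precisely this separate continuity.
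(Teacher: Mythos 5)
Your proof is correct and follows essentially the same route as the paper's: write $\eta \ast f = \eta_{A'} \circ Ff$ and appeal to continuity of composition in $\mb{B}$, local continuity of $F$, and the componentwise computation of suprema of natural transformations. The paper's own proof is just a terser version of this; your explicit invocation of \cref{prop:main:11} for the continuity of $\eta \mapsto \eta_{A'}$ and the remark about reducing to separate continuity in each argument are sound refinements rather than deviations.
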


\begin{proof}
  Let $(F, A)$ and $(G, B)$ be two objects in $[\mb{A} \to \mb{B}] \times \mb{A}$.
  A morphism $(F, A) \to (G, B)$ is a pair $(\eta, f)$ where $\eta : F \nto G$ and $f : A \to B$.
  Then $\ms{ev}_{\mb{A},\mb{B}}(\eta, f) : FA \to GB$ is given by $\eta \ast f = \eta_B \circ Ff$.
  The functor $F$ is locally continuous, and composition in $\mb{B}$ is continuous, so we conclude that this mapping of morphisms is continuous.
  So $\ms{ev}_{\mb{A},\mb{B}}$ is locally continuous.\qed
\end{proof}

\begin{lemma}
  \label{lemma:main:5}
  Let $\mb{A}$, $\mb{B}$, and $\mb{C}$ be \O-categories and assume $\mb{C}$ is small.
  The functor $\Lambda : [\mb{A} \times \mb{B} \lcto \mb{C}] \to [\mb{A} \lcto [\mb{B} \lcto \mb{C}]]$ is locally continuous, where
  \begin{align*}
    \Lambda F A B &= F(A, B)\\
    \Lambda F A(f : B \to B') &= F(\ms{id}_A, f) : F(A, B) \to F(A, B')\\
    \Lambda F (f : A \to A')_B &= F(f, \ms{id}_B) : F(A, B) \to F(A', B)\\
    \left(\left(\Lambda(\eta : F \nto G)\right)_A\right)_B &= \eta_{(A,B)} : \Lambda FAB \to \Lambda GAB.
  \end{align*}
\end{lemma}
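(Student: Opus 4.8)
The plan is to first check that $\Lambda$ is a well-defined functor landing in $[\mb{A} \lcto [\mb{B} \lcto \mb{C}]]$, and then to establish local continuity by observing that on both the source and target homsets directed suprema are computed componentwise, so that $\Lambda$ acts essentially as a reindexing of components.

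For well-definedness I would proceed from the inside out. First, for a locally continuous $F : \mb{A}\times\mb{B} \to \mb{C}$ and an object $A$ of $\mb{A}$, the assignment $\Lambda FA$ is a functor by functoriality of $F$, and it is locally continuous because $\Lambda FA(f) = F(\ms{id}_A, f)$ factors the continuous action of $F$ through the continuous section $f \mapsto (\ms{id}_A, f) : \mb{B}(B, B') \to (\mb{A}\times\mb{B})((A,B),(A,B'))$; continuity of this section holds since products of dcpos have componentwise suprema and $\ms{id}_A$ is constant. Next, $\Lambda F$ is a functor $\mb{A} \to [\mb{B}\lcto\mb{C}]$: the naturality of each $\Lambda F f$ and the local continuity of $\Lambda F$ both reduce, componentwise in $B$, to the bifunctoriality $F(\ms{id}_{A'},g)\circ F(f,\ms{id}_B) = F(f,g) = F(f,\ms{id}_{B'})\circ F(\ms{id}_A,g)$ and to the local continuity of $F$ along the section $f \mapsto (f,\ms{id}_B)$, using that suprema in $[\mb{B}\lcto\mb{C}]$ are componentwise (\cref{prop:main:11}). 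Finally, for $\eta : F \nto G$ the family $\Lambda\eta$ with $((\Lambda\eta)_A)_B = \eta_{(A,B)}$ is natural in $B$ (naturality of $\eta$ in its second component) and in $A$ (naturality of $\eta$ at morphisms $(f, \ms{id}_B)$), and functoriality $\Lambda(\rho\circ\eta) = \Lambda\rho\circ\Lambda\eta$ and $\Lambda\,\ms{id} = \ms{id}$ are immediate from the componentwise definition.

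For local continuity, fix locally continuous $F, G : \mb{A}\times\mb{B}\to\mb{C}$ and a directed set $H \subseteq [\mb{A}\times\mb{B}\lcto\mb{C}](F,G)$. Suprema in this homset are computed componentwise, so $(\dirsup H)_{(A,B)} = \dirsup_{\eta\in H}\eta_{(A,B)}$; iterating \cref{prop:main:11}, suprema in $[\mb{A}\lcto[\mb{B}\lcto\mb{C}]](\Lambda F,\Lambda G)$ are also componentwise, that is, $\big((\dirsup_{\eta\in H}\Lambda\eta)_A\big)_B = \dirsup_{\eta\in H}\eta_{(A,B)}$. Comparing components at each $(A,B)$ gives $\Lambda(\dirsup H) = \dirsup_{\eta\in H}\Lambda\eta$, which is exactly local continuity.

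The calculations are routine; the only points demanding care are the well-definedness checks — verifying that $\Lambda F$ actually lands in locally continuous functors and that $\Lambda\eta$ is natural in both arguments — since local continuity itself is immediate once one records that all relevant suprema are componentwise. I expect the componentwise description of suprema in the iterated functor category to be the fact I lean on most, and I would state it explicitly before the final comparison.
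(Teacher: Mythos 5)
Your proposal is correct and follows essentially the same route as the paper: well-definedness is checked from the inside out (each $\Lambda FA$ locally continuous, then $\Lambda F$ a locally continuous functor into $[\mb{B}\lcto\mb{C}]$, then $\Lambda\eta$ natural in both variables, all by reducing componentwise to the functoriality, local continuity, and naturality of $F$ and $\eta$), and local continuity of $\Lambda$ itself is obtained by comparing components using that directed suprema of natural transformations are computed componentwise. The only cosmetic difference is that you package the continuity checks via the continuous sections $f \mapsto (\ms{id}_A,f)$ and $f\mapsto(f,\ms{id}_B)$, where the paper writes out the corresponding supremum calculations directly.
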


\begin{proof}%
  \begingroup%
  \newcommand{\ABC}{\mb{A} \times \mb{B} \to \mb{C}}%
  \newcommand{\ATBC}{\mb{A} \to [\mb{B} \lcto \mb{C}]}%
  \newcommand{\BC}{\mb{B} \to \mb{C}}%
  We begin by checking that $\Lambda$ is well-defined on objects.
  Let $F : \ABC$ be locally continuous and $A$ be an object of $\mb{A}$.
  Then $\Lambda F A = F(A, {-)} : \BC$ is clearly a functor.
  We must show that $\Lambda FA : \BC$ it is locally continuous.
  Let $D \subseteq \mb{B}(B, B')$ be a directed set.
  We must show that $\Lambda F A(\dirsup D) = \dirsup_{d \in D} \Lambda FAd$.
  But this is obvious because $F$ is locally continuous:
  \[
    \Lambda F A(\dirsup D) = F(\ms{id}, \dirsup D) = \dirsup_{d \in D} F(\ms{id}, d) \dirsup_{d \in D} \Lambda FAd.
  \]
  So $\Lambda F A : \BC$ is a locally continuous functor.

  Next we show that $\Lambda F : \ATBC$ is a locally continuous functor.
  It is well-defined on objects by the above.
  We check that it is well-defined on morphisms.
  Let $f : A \to A'$ be arbitrary in $\mb{A}$.
  We must show that $\Lambda F f : \Lambda F A \nto \Lambda F A'$ is a natural transformation.
  Let $g : B \to B'$ be arbitrary in $\mb{B}$.
  We must show that the following diagram commutes:
  \[
    \begin{tikzcd}[column sep=3em]
      \Lambda F A B
      \ar[r, "{(\Lambda F f)_B}"]
      \ar[d, swap, "{\Lambda F A g}"]
      &
      \Lambda F A' B
      \ar[d, "{\Lambda F A' g}"]\\
      \Lambda F A B'
      \ar[r, "{(\Lambda F f)_B'}"]
      &
      \Lambda F A' B'
    \end{tikzcd}
  \]
  We recognize this as the following diagram, which commutes by the functoriality of $F$:
  \[
    \begin{tikzcd}[column sep=3em]
      F(A, B)
      \ar[r, "{F(f, \ms{id}_B)}"]
      \ar[d, swap, "{F(\ms{id}_A, g)}"]
      &
      F(A', B)
      \ar[d, "{F(\ms{id}_{A'}, g)}"]\\
      F(A, B')
      \ar[r, "{F(f, \ms{id}_{B'})}"]
      &
      F(A', B').
    \end{tikzcd}
  \]
  So $\Lambda F f$ is natural.
  We must now show that $\Lambda F$ is locally continuous.
  Let $D \subseteq \mb{A}(A, A')$ be a directed set.
  We must show that $\Lambda F(\dirsup D) = \dirsup_{d \in D} \Lambda F d$.
  Again, this is obvious because $F$ is locally continuous.
  Let $B$ be an arbitrary object of $\mb{B}$, then:
  \[
    \left(\Lambda F(\dirsup D)\right)_B = F(\dirsup D, \ms{id}_B) = \dirsup_{d \in D} F(d, \ms{id}_B) = \dirsup_{d \in D} \left(\Lambda F d\right)_B = \left(\dirsup_{d \in D} \Lambda F d\right)_B
  \]
  Because $B$ was an arbitrary component, we conclude $\Lambda F(\dirsup D) = \dirsup_{d \in D} \Lambda F d$, \ie, that $\Lambda F$ is locally continuous.

  It follows that $\Lambda$ is well-defined on objects: if $F : \ABC$ is locally continuous, then $\Lambda F$ is an object of $[\mb{A} \lcto [\mb{B} \lcto \mb{C}]]$.

  Next, we show that $\Lambda$ is well-defined on morphisms.
  Let $\eta : F \nto G$ be a natural transformation between two functors $F, G : \ABC$.
  We want to show that $\Lambda \eta : \Lambda F \nto \Lambda G$ is a natural transformation.
  Let $A$ be an object of $\mb{A}$, then must first check that the component $(\Lambda \eta)_A : \Lambda F A \nto \Lambda G A$ is a natural transformation.
  Let $B$ be an object of $\mb{B}$, then $((\Lambda \eta)_A)_B : \Lambda FAB \to \Lambda GAB$ is a morphism in $\mb{C}$.
  Indeed, $((\Lambda \eta)_A)_B = \eta_{(A, B)} : F(A, B) \to G(A, B)$, and $\Lambda FAB = F(A,B)$ and $\Lambda GAB = G(A, B)$.
  We show that $(\Lambda \eta)_A$ is natural.
  Let $g : B \to B'$ be an arbitrary morphism in $\mb{B}$.
  We show that the following square commutes:
  \[
    \begin{tikzcd}[column sep=4em]
      \Lambda F A B
      \ar[r, "{((\Lambda \eta)_A)_B}"]
      \ar[d, swap, "{\Lambda F A g}"]
      &
      \Lambda G A B
      \ar[d, "{\Lambda F A g}"]\\
      \Lambda F A B'
      \ar[r, "{((\Lambda \eta)_{A})_{B'}}"]
      &
      \Lambda G A B'
    \end{tikzcd}
  \]
  This is exactly the following square, which commutes by the naturality of $\eta$:
  \[
    \begin{tikzcd}[column sep=4em]
      F(A, B)
      \ar[r, "{\eta_{(A, B)}}"]
      \ar[d, swap, "{F(\ms{id}_A, g)}"]
      &
      F(A', B)
      \ar[d, "{F(\ms{id}_{A'}, g)}"]\\
      F(A, B')
      \ar[r, "{\eta_{(A, B')}}"]
      &
      F(A', B').
    \end{tikzcd}
  \]
  So $(\Lambda \eta)_A : \Lambda F A \nto \Lambda G A$ is a natural transformation.
  Now we must show that $\Lambda \eta : \Lambda F \nto \Lambda G$ is natural.
  Let $f : A \to A'$ be arbitrary in $\mb{A}$.
  We must show that the following square commutes:
  \[
    \begin{tikzcd}[column sep=4em, arrows=Rightarrow]
      \Lambda F A
      \ar[r, "{(\Lambda \eta)_A}"]
      \ar[d, swap, "{\Lambda F f}"]
      &
      \Lambda G A
      \ar[d, "{\Lambda G f}"]\\
      \Lambda F A'
      \ar[r, "{(\Lambda \eta)_{A'}}"]
      &
      \Lambda G A'
    \end{tikzcd}
  \]
  Natural transformations are equal if and only if they agree in all components, so the above square commutes if and only if the following square commutes for all objects $B$ of $\mb{B}$:
  \[
    \begin{tikzcd}[column sep=4em]
      \Lambda F A B
      \ar[r, "{((\Lambda \eta)_A)_B}"]
      \ar[d, swap, "{(\Lambda F f)_B}"]
      &
      \Lambda G A B
      \ar[d, "{(\Lambda G f)_B}"]\\
      \Lambda F A' B
      \ar[r, "{((\Lambda \eta)_{A'})_B}"]
      &
      \Lambda G A' B
    \end{tikzcd}
  \]
  We recognize this square as the following square, which commutes by naturality of $\eta$:
  \[
    \begin{tikzcd}[column sep=4em]
      F(A, B)
      \ar[r, "{\eta_{(A, B)}}"]
      \ar[d, swap, "{F(f, \ms{id}_B)}"]
      &
      G(A, B)
      \ar[d, "{G(f, \ms{id}_B)}"]\\
      F(A', B)
      \ar[r, "{\eta_{(A', B)}}"]
      &
      G(A', B)
    \end{tikzcd}
  \]
  We conclude that $\Lambda \eta : \Lambda F \nto \Lambda G$ is natural.

  The mapping $\Lambda$ clearly preserves identities and respects composition, so we conclude that $\Lambda$ is a functor.

  We now show that $\Lambda$ is locally continuous.
  Let $H \subseteq [\mb{A} \times \mb{B} \lcto \mb{C}](F, G)$ be a directed set of natural transformations.
  We must show that $\Lambda (\dirsup H) = \dirsup_{\eta \in H} \Lambda \eta$.
  The ordering on natural transformations is determined component-wise, it is sufficient to check for all objects $A$ in $\mb{A}$ that $\left(\Lambda \dirsup H\right)_A = \left(\dirsup_{\eta \in H} \Lambda \eta\right)_A$.
  To show this, it is sufficient to show for all objects $B$ in $\mb{B}$ that $\left(\left(\Lambda \dirsup H\right)_A\right)_B = \left(\left(\dirsup_{\eta \in H} \Lambda \eta\right)_A\right)_B$.
  But this is obvious:
  \[
    \left(\left(\Lambda \dirsup H\right)_A\right)_B = \left(\dirsup H\right)_{(A, B)} = \dirsup_{\eta \in H} \eta_{(A, B)} = \left(\left(\dirsup_{\eta \in H} \Lambda \eta\right)_A\right)_B.
  \]
  We conclude that $\Lambda$ is a locally continuous functor.\qed
  \endgroup
\end{proof}

\begin{lemma}
  \label{lemma:main:8}
  Finally, we must show that for each 2-cell $\alpha : F \nto G : \mb{A} \times \mb{B} \to \mb{C}$, there exists a unique 2-cell $\Lambda \alpha : \mb{A} \to [\mb{B} \lcto \mb{C}]$ such that the following diagram commutes:
  \begin{equation}
    \label[diagram]{eq:main:21}
    \begin{tikzcd}[column sep=4em]
      \mb{A} \times \mb{B} \ar[d, swap, Rightarrow, end anchor={[yshift=-1ex]north}, "{(\Lambda \alpha) \times \ms{id}_{\mb{B}}}"] \ar[dr, Rightarrow, "{\alpha}"] &\\
      {[ \mb{B} \lcto \mb{C} ] \times \mb{B}} \ar[r, swap, "{\ms{ev}_{\mb{B},\mb{C}}}"] & \mb{C}
    \end{tikzcd}
  \end{equation}
\end{lemma}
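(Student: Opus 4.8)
The plan is to exhibit $\Lambda\alpha$ explicitly using the formula already introduced in \cref{lemma:main:5}, namely the 2-cell $\Lambda\alpha : \Lambda F \nto \Lambda G$ whose components are determined by $\left((\Lambda\alpha)_A\right)_B = \alpha_{(A,B)}$. Since \cref{lemma:main:5} already establishes that $\Lambda$ is a well-defined (and locally continuous) functor, $\Lambda\alpha$ is automatically a legitimate 2-cell $\mb{A}\to[\mb{B}\lcto\mb{C}]$, so the only genuinely new work is to check that this choice makes \cref{eq:main:21} commute and that it is the unique such choice.

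For existence, I would verify the 2-cell equation componentwise. First I would observe that the two functors bounding the whiskered composite along the bottom-then-right path are $\ms{ev}_{\mb{B},\mb{C}}\circ(\Lambda F\times\ms{id}_{\mb{B}})$ and $\ms{ev}_{\mb{B},\mb{C}}\circ(\Lambda G\times\ms{id}_{\mb{B}})$; unwinding the definitions from \cref{lemma:main:5,lemma:main:7} shows these equal $F$ and $G$ on the nose, so the whiskered 2-cell $\ms{ev}_{\mb{B},\mb{C}}\ast\bigl((\Lambda\alpha)\times\ms{id}_{\mb{B}}\bigr)$ is indeed a 2-cell $F\nto G$, matching the diagonal $\alpha$. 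Its component at an object $(A,B)$ is $\ms{ev}_{\mb{B},\mb{C}}\bigl((\Lambda\alpha)_A,\ms{id}_B\bigr)$, which by \cref{lemma:main:7} equals $(\Lambda\alpha)_A\ast\ms{id}_B$. Since horizontal composition of a natural transformation with an identity morphism returns the relevant component, this is $\left((\Lambda\alpha)_A\right)_B=\alpha_{(A,B)}$. As $(A,B)$ was arbitrary, the whiskered 2-cell agrees with $\alpha$ in every component, hence equals $\alpha$, so \cref{eq:main:21}~commutes.

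For uniqueness, suppose $\gamma : \Lambda F\nto\Lambda G$ is any 2-cell making \cref{eq:main:21} commute. The identical component computation shows that the $(A,B)$-component of $\ms{ev}_{\mb{B},\mb{C}}\ast(\gamma\times\ms{id}_{\mb{B}})$ is $(\gamma_A)_B$, and commutativity forces this to equal $\alpha_{(A,B)}=\left((\Lambda\alpha)_A\right)_B$ for all $A$ and $B$. Because a 2-cell in $\O$ is determined by its components, $\gamma=\Lambda\alpha$, giving uniqueness and completing the verification that $\O$ is 2-cartesian closed.

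I do not expect a genuine obstacle here, since the heavy lifting---that $\Lambda$ is well defined and locally continuous---was carried out in \cref{lemma:main:5}. The one point demanding care is the bookkeeping of the whiskering and product notation: one must confirm that $(\Lambda\alpha)\times\ms{id}_{\mb{B}}$ has component $\bigl((\Lambda\alpha)_A,\ms{id}_B\bigr)$ at $(A,B)$, that \cref{lemma:main:7} is applied with $\ms{ev}_{\mb{B},\mb{C}}$ acting on this pair, and that the abuse of notation $\eta\ast\ms{id}_B=\eta_B$ is invoked correctly so that the component collapses precisely to $\alpha_{(A,B)}$.
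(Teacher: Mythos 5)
Your proposal is correct and follows the paper's existence argument essentially verbatim: both reduce to \cref{lemma:main:5,lemma:main:7}, check that the bottom-then-right composite equals $F$ (resp.\ $G$) on the nose, and then compute the $(A,B)$-component of the whiskered 2-cell as $(\Lambda\alpha)_A \ast \ms{id}_B = ((\Lambda\alpha)_A)_B = \alpha_{(A,B)}$, exactly as in the paper's verification of its equation $\ms{ev}_{\mb{B},\mb{C}} \circ ((\Lambda \alpha) \times \ms{id}_{\mb{B}}) = \alpha$. The one place you diverge is uniqueness: the paper observes that {\O} is a subcategory of $\mb{Cat}$, that $\ms{ev}_{\mb{B},\mb{C}}$ restricts its $\mb{Cat}$-counterpart, and then inherits uniqueness from the cartesian closed 2-category structure of $\mb{Cat}$, whereas you rerun the component computation for an arbitrary competitor $\gamma$ and conclude $(\gamma_A)_B = \alpha_{(A,B)}$ forces $\gamma = \Lambda\alpha$. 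Your route is more self-contained and makes the mechanism visible; the paper's is shorter and automatically covers the pedantic point that a competing 2-cell could a priori live between functors other than $\Lambda F$ and $\Lambda G$ (you quantify only over $\gamma : \Lambda F \nto \Lambda G$, which is harmless here since the 1-categorical transposes of $F$ and $G$ are already unique, but is worth a remark). Either argument is acceptable.
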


\begin{proof}
  This requires showing that $\Lambda F$ and $\Lambda G$ are both locally locally continuous functors $\mb{A} \to [\mb{B} \lcto \mb{C}]$ and that:
  \begin{align}
    \ms{ev}_{\mb{B},\mb{C}} \circ ((\Lambda F) \times \ms{id}_{\mb{B}}) &= F\label{eq:main:15}\\
    \ms{ev}_{\mb{B},\mb{C}} \circ ((\Lambda G) \times \ms{id}_{\mb{B}}) &= G\label{eq:main:16}\\
    \ms{ev}_{\mb{B},\mb{C}} \circ ((\Lambda \alpha) \times \ms{id}_{\mb{B}}) &= \alpha.\label{eq:main:17}
  \end{align}
  The functors $\Lambda F$ and $\Lambda G$ are locally continuous and have the right type by \cref{lemma:main:5}.
  By \cref{lemma:main:5}, we also have that $\alpha : \Lambda F \nto \Lambda G$ is a natural transformation.
  So $\Lambda \alpha : \Lambda F \nto \Lambda G$ is a 2-cell in \O.
  We check \cref{eq:main:15}.
  Let $(A, B)$ be an arbitrary object of $\mb{A} \times \mb{B}$, then
  \[
    \left(\ms{ev}_{\mb{B},\mb{C}} \circ ((\Lambda F) \times \ms{id}_{\mb{B}})\right)(A, B) = \ms{ev}_{\mb{B},\mb{C}}(\Lambda F A, B) = \Lambda F A B = F(A, B).
  \]
  Let $(a, b) : (A, B) \to (A', B')$ be an arbitrary morphism.
  Then
  \begin{align*}
    &\left(\ms{ev}_{\mb{B},\mb{C}} \circ ((\Lambda F) \times \ms{id}_{\mb{B}})\right)(a, b)\\
    &= \ms{ev}_{\mb{B},\mb{C}}(\Lambda F a, b)\\
    &= \Lambda F a \ast b\\
    &= (\Lambda F a)_{B'} \circ (\Lambda F A b)\\
    &= F(a, \ms{id}_{B'}) \circ F(\ms{id}_A, b)\\
    &= F(a, b).
  \end{align*}
  Both sides of \cref{eq:main:15} are equal on objects and morphisms, so define equal functors.
  We conclude \cref{eq:main:15}.
  \Cref{eq:main:16} follows identically.

  We check \cref{eq:main:17}.
  Let $(A, B)$ be an arbitrary object of $\mb{A} \times \mb{B}$.
  We must show that $\left(\ms{ev}_{\mb{B},\mb{C}} \circ ((\Lambda \alpha) \times \ms{id}_{\mb{B}})\right)_{(A,B)} = \alpha_{(A, B)}$.
  We compute:
  \begin{align*}
    &\left(\ms{ev}_{\mb{B},\mb{C}} \circ ((\Lambda \alpha) \times \ms{id}_{\mb{B}})\right)_{(A,B)}\\
    &= \ms{ev}_{\mb{B},\mb{C}}((\Lambda \alpha)_A, \ms{id}_B)\\
    &= (\Lambda \alpha)_A \ast \ms{id}_B\\
    &= \left((\Lambda \alpha)_A\right)_B \circ (\Lambda A \ms{id}_B)\\
    &= \left((\Lambda \alpha)_A\right)_B\\
    &= \alpha_{(A,B)}.
  \end{align*}
  We conclude \cref{eq:main:17}.

  Uniqueness of the 2-cell $\Lambda\alpha$ is inherited from $\mb{Cat}$.
  Indeed, suppose there were some other 2-cell $\beta : F \nto G : \mb{A} \times \mb{B} \to \mb{C}$ making \cref{eq:main:21} commute.
  Observe that {\O} is a subcategory of $\mb{Cat}$, $\Lambda \alpha$ defines the same 2-cell in {\O} as it does in $\mb{Cat}$, and $\ms{ev}_{\mb{B},\mb{C}}$ in {\O} is a restriction of its counterpart in $\mb{Cat}$.
  So $\beta$ also makes \cref{eq:main:21} commute in $\mb{Cat}$.
  We use the fact that $\mb{Cat}$ is a cartesian closed 2-category to conclude $\beta = \Lambda \alpha$.
  This completes the proof.\qed
\end{proof}

\begin{proposition}
  \label{prop:main:6}
  The category {\O} is a cartesian closed 2-category.
\end{proposition}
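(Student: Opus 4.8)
The plan is to show that {\O}, already a cartesian 2-category by \cref{prop:main:7}, has exponentials satisfying the two-dimensional universal property, \ie, that for each small \O-category $\mb{B}$ the 2-functor $({-}) \times \mb{B}$ admits a right 2-adjoint. Since the hom-category $\O(\mb{X}, \mb{Y})$ is by definition $[\mb{X} \lcto \mb{Y}]$, it suffices to exhibit, for all small \O-categories $\mb{B}$ and $\mb{C}$, an exponential object $[\mb{B} \lcto \mb{C}]$ together with an evaluation morphism in {\O}, and to show that currying induces an isomorphism of categories
\[
  \Lambda : [\mb{A} \times \mb{B} \lcto \mb{C}] \xrightarrow{\;\cong\;} [\mb{A} \lcto [\mb{B} \lcto \mb{C}]]
\]
that is 2-natural in $\mb{A}$ and $\mb{C}$. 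The bulk of the analytic work has already been done in the preceding lemmas, so this proof is primarily one of assembly.

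First I would collect the ingredients. The candidate exponential $[\mb{B} \lcto \mb{C}]$ is a small \O-category by \cref{prop:main:11}, hence an object of {\O}. The evaluation functor $\ms{ev}_{\mb{B},\mb{C}} : [\mb{B} \lcto \mb{C}] \times \mb{B} \to \mb{C}$ is locally continuous by \cref{lemma:main:7}, so it is a horizontal morphism in {\O}. Currying is a locally continuous functor $\Lambda : [\mb{A} \times \mb{B} \lcto \mb{C}] \to [\mb{A} \lcto [\mb{B} \lcto \mb{C}]]$ by \cref{lemma:main:5}; in particular it acts functorially on the hom-categories, sending horizontal morphisms $F$ to $\Lambda F$ and 2-cells $\alpha$ to $\Lambda \alpha$.

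Next I would verify that $\Lambda$ is an isomorphism by exhibiting its inverse, uncurrying. Given $P : \mb{A} \to [\mb{B} \lcto \mb{C}]$ in {\O}, set $P^\flat = \ms{ev}_{\mb{B},\mb{C}} \circ (P \times \ms{id}_{\mb{B}})$, which is again locally continuous because products and composition are (\cref{lemma:main:6}), and symmetrically on 2-cells; this already confirms that \emph{both} transposition directions stay inside the locally continuous functors. Now \cref{lemma:main:8} supplies, for every 2-cell $\alpha : F \nto G : \mb{A} \times \mb{B} \to \mb{C}$, a \emph{unique} 2-cell $\Lambda \alpha$ making \cref{eq:main:21} commute, and the identities \cref{eq:main:15,eq:main:16,eq:main:17} established there say precisely that $(\Lambda F)^\flat = F$ and $(\Lambda \alpha)^\flat = \alpha$, \ie, $({-})^\flat \circ \Lambda = \ms{id}$ on both 1-cells and 2-cells. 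Conversely, for any $P$ the morphism $P$ itself makes \cref{eq:main:21} commute for $\alpha = P^\flat$, so the uniqueness clause of \cref{lemma:main:8} forces $\Lambda(P^\flat) = P$, giving $\Lambda \circ ({-})^\flat = \ms{id}$. Hence $\Lambda$ and $({-})^\flat$ are mutually inverse functors, so $\Lambda$ is an isomorphism of hom-categories.

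Finally I would check that this isomorphism is 2-natural in $\mb{A}$ and $\mb{C}$, and here I would argue exactly as in the uniqueness part of \cref{lemma:main:8}: {\O} is a sub-2-category of $\mb{Cat}$; its products, its evaluation morphisms, and the transposition $\Lambda$ are all restrictions of the corresponding data in $\mb{Cat}$; and $\mb{Cat}$ is a cartesian closed 2-category. The naturality squares therefore already commute in $\mb{Cat}$, and since every morphism involved lies in {\O}, they commute in {\O} as well. The main obstacle will not be any single hard computation — the lemmas have already discharged the local-continuity checks — but the packaging: one must be careful to confirm that the transpose is genuinely functorial and 2-natural (and that uncurrying lands among locally continuous functors), rather than merely a bijection of horizontal morphisms. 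Once this is observed, {\O} inherits the two-dimensional universal property of the exponential from $\mb{Cat}$, and the result follows.
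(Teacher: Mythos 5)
Your proof is correct and takes essentially the same route as the paper: it assembles \cref{prop:main:7}, \cref{prop:main:11}, \cref{lemma:main:7}, and \cref{lemma:main:5,lemma:main:8}, with uniqueness and naturality inherited from $\mb{Cat}$. The only difference is packaging — you phrase the universal property as ``$\Lambda$ is an isomorphism of hom-categories with inverse uncurrying,'' whereas the paper invokes the unique-transpose formulation of \cref{lemma:main:8} directly; these are equivalent and rest on the same lemmas.
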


\begin{proof}
  The category {\O} inherits its 2-categorical structure from $\mb{Cat}$.
  Objects are small {\O} categories, horizontal morphisms are locally continuous functors, and vertical morphisms are natural transformations between locally continuous functors.
  {\O} is a cartesian 2-category by \cref{prop:main:7}.
  Whenever $\mb{A}$ and $\mb{B}$ are small \O-categories, $[\mb{A} \lcto \mb{B}]$ is a small \O-category by \cref{prop:main:11}.
  The evaluation functor $\ms{ev}_{\mb{A},\mb{B}} : [\mb{A} \lcto \mb{B}] \times \mb{A} \to \mb{B}$ is locally continuous by \cref{lemma:main:7}.
  The locally continuous abstraction functor $\Lambda : [\mb{A} \times \mb{B} \lcto \mb{C}] \to [\mb{A} \lcto [\mb{B} \lcto \mb{C}]]$ of has the requisite structure by \cref{lemma:main:8}.
  We conclude that {\O} is cartesian closed 2-category.\qed
\end{proof}

\subsection{Horizontal Composition and Iterates}
\label{sec:horizontal-iterates}

\begin{proposition}
  \label{prop:42}
  Let $\mb{K}_1$, $\mb{K}_2$, and $\mb{K}_3$ be $\mb{O}$-categories.
  Let $F_i, G_i : \mb{K}_{i+1} \to \mb{K}_{i}$ be locally continuous functors and $e_i : F_i \to G_i$ natural embeddings for $i = 1, 2$.
  Then $e_1 \ast e_2 : F_1F_2 \to G_1G_2$ is again a natural embedding with associated projection $e_1^p \ast e_2^p$.
\end{proposition}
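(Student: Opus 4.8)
The plan is to reduce everything to a componentwise computation. Since the horizontal composite of two natural transformations is again a natural transformation (as recalled in \cref{sec:background-notation}), both $e_1 \ast e_2 : F_1F_2 \nto G_1G_2$ and $e_1^p \ast e_2^p : G_1G_2 \nto F_1F_2$ are automatically natural. It therefore suffices to fix an object $C$ of $\mb{K}_3$ and to show (i) that the component $(e_1 \ast e_2)_C$ is an embedding in $\mb{K}_1$, and (ii) that its associated projection is exactly $(e_1^p \ast e_2^p)_C$. Associated projections of natural embeddings are taken componentwise, so (ii) identifies the two natural transformations and finishes the proof.

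First I would expand the component using one of the two equal forms of horizontal composition from \cref{sec:background-notation}, namely $(e_1 \ast e_2)_C = (e_1)_{G_2 C} \circ F_1((e_2)_C) : F_1F_2C \to G_1G_2C$. Now $(e_2)_C$ is an embedding in $\mb{K}_2$ because $e_2$ is a natural embedding, and $F_1$ is locally continuous, so by \cref{lemma:15} the morphism $F_1((e_2)_C)$ is an embedding with $\big(F_1((e_2)_C)\big)^p = F_1\big(((e_2)_C)^p\big) = F_1\big((e_2^p)_C\big)$. Likewise $(e_1)_{G_2 C}$ is an embedding with projection $\big((e_1)_{G_2 C}\big)^p = (e_1^p)_{G_2 C}$. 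Invoking the standard fact that a composite of embeddings is an embedding whose projection is the composite of the projections in the reverse order, I conclude that $(e_1 \ast e_2)_C$ is an embedding with projection $F_1\big((e_2^p)_C\big) \circ (e_1^p)_{G_2 C}$.

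It remains to recognize this projection as a component of $e_1^p \ast e_2^p$. Using the \emph{other} of the two equal forms of horizontal composition, this time applied to $e_1^p : G_1 \nto F_1$ and $e_2^p : G_2 \nto F_2$, gives $(e_1^p \ast e_2^p)_C = F_1\big((e_2^p)_C\big) \circ (e_1^p)_{G_2 C}$, which matches the projection computed above. Hence $e_1 \ast e_2$ is a natural embedding with associated projection $e_1^p \ast e_2^p$.

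The only real obstacle is bookkeeping: one must be careful to use the two different (but equal) expansions of $\eta \ast \epsilon$ for $e_1 \ast e_2$ and for $e_1^p \ast e_2^p$ so that the composite and its reversed projection line up on the nose, and to appeal to \cref{lemma:15} for the commutation $\big(F_1((e_2)_C)\big)^p = F_1\big(((e_2)_C)^p\big)$ that lets $F_1$ pass through the projection. If the paper has not yet recorded that the componentwise projections of a natural embedding again assemble into a natural transformation, a one-line remark (or an appeal to such a lemma among the general e-p-pair results) would round out the argument; everything else is routine.
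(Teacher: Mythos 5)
Your argument is correct, but it takes a more hands-on route than the paper. The paper disposes of this proposition in one line: horizontal composition is the action on morphisms of the locally continuous composition functor ${\circ} : [\mb{K}_2 \lcto \mb{K}_1] \times [\mb{K}_3 \lcto \mb{K}_2] \to [\mb{K}_3 \lcto \mb{K}_1]$ (\cref{lemma:main:6}, via \cref{lemma:main:7}), the pair $(e_1, e_2)$ is an embedding in the product \O-category with associated projection $(e_1^p, e_2^p)$, and \cref{lemma:15} says locally continuous functors preserve embeddings and send associated projections to associated projections; applying it to ${\circ}$ yields the claim at the level of natural transformations in one stroke. You instead unfold everything componentwise: expand $(e_1 \ast e_2)_C = (e_1)_{G_2C} \circ F_1((e_2)_C)$, use \cref{lemma:15} only for the single functor $F_1$, compose the two e-p-pairs, and then match the reversed composite of projections against the \emph{other} expansion of $(e_1^p \ast e_2^p)_C$. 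That bookkeeping is exactly right, and your closing caveat is moot here: the hypothesis already supplies $e_i^p$ as natural transformations (embeddings in the functor \O-category come equipped with natural associated projections), and since you identify the componentwise projection of $e_1 \ast e_2$ with the component of the already-natural $e_1^p \ast e_2^p$, nothing further needs to be assembled. What the paper's abstract argument buys is brevity and reuse of machinery already needed for cartesian closure of {\O}; what your computation buys is that it makes visible exactly where the two equal forms of $\eta \ast \epsilon$ are needed, which the one-line proof hides.
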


\begin{proof}
  This is an immediate consequence of \cref{lemma:main:7,lemma:15}.\qed
\end{proof}

\begin{lemma}
  \label{lemma:main:1}
  Let $\mb{K}$ be an \O-category.
  For all $n \in \N$, the iteration functor $({-})^n : [\mb{K} \lcto \mb{K}] \to [\mb{K} \lcto \mb{K}]$ given by $F \mapsto F^n$ on objects and $\eta \mapsto \eta^{(n)}$ on morphisms is locally continuous.
  It preserves the local continuity of functors.
  Horizontal iteration of natural transformations between locally continuous functors is continuous.
\end{lemma}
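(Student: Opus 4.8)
The plan is to prove the three assertions in turn, reducing each to an induction on $n$ and, ultimately, to the continuity of composition in $\mb{K}$.

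First I would settle well-definedness on objects, which is exactly the claim that $(-)^n$ preserves local continuity. Since $F^n = F \circ \dotsb \circ F$ is an $n$-fold composite and the composite of two locally continuous functors is again locally continuous (an easy check from the definitions), an induction with base case $F^0 = \ms{id}_{\mb{K}}$ shows that $F^n$ is locally continuous whenever $F$ is. I would then record that $(-)^n$ is functorial on morphisms: that $\eta^{(n)} : F^n \nto G^n$ is natural and that $(-)^{(n)}$ preserves identities and respects vertical composition follows by induction on $n$ from the interchange law between horizontal and vertical composition. This is routine and I would only sketch it.

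The substance of the lemma is local continuity, namely that for fixed locally continuous $F, G : \mb{K} \to \mb{K}$ the map $\eta \mapsto \eta^{(n)}$ from $[\mb{K} \lcto \mb{K}](F, G)$ to $[\mb{K} \lcto \mb{K}](F^n, G^n)$ is continuous. I would argue by induction on $n$. Directed suprema of natural transformations are computed componentwise (see the proof of \cref{prop:main:11}), so it suffices to prove that for each object $A$ the map $\eta \mapsto (\eta^{(n)})_A$ preserves directed suprema. The base case $n = 0$ is the constant map $\eta \mapsto \ms{id}_{\ms{id}_{\mb{K}}}$, which is trivially continuous. For the inductive step I would expand the horizontal composite componentwise,
\[
  (\eta^{(n+1)})_A = (\eta \ast \eta^{(n)})_A = G(\eta^{(n)}_A) \circ \eta_{F^n A} : F^{n+1}A \to G^{n+1}A,
\]
and then assemble continuity from three continuous ingredients: the component projection $\eta \mapsto \eta_{F^n A}$ is continuous because suprema are componentwise; the map $\eta \mapsto G(\eta^{(n)}_A)$ is continuous by the induction hypothesis together with the local continuity of $G$; and composition in the \O-category $\mb{K}$ is continuous. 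Since a map into a product of hom-dcpos is continuous exactly when each of its coordinates is, postcomposing the pairing $\eta \mapsto (G(\eta^{(n)}_A), \eta_{F^n A})$ with composition in $\mb{K}$ yields a continuous map. This closes the induction, and the same computation is precisely the assertion that horizontal iteration of natural transformations between locally continuous functors is continuous.

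The main obstacle is purely bookkeeping in the inductive step: one must keep the domains and codomains of $\eta_{F^n A} : F^{n+1}A \to G F^n A$ and $G(\eta^{(n)}_A) : G F^n A \to G^{n+1}A$ aligned so that their composite types correctly, and be careful to apply the induction hypothesis to $\eta^{(n)}$ rather than to $\eta$. Once the componentwise formula for $\eta^{(n+1)}$ is in hand, continuity is a formal consequence of the local continuity of $G$ and the continuity of composition in $\mb{K}$.
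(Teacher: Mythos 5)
Your proof is correct and follows essentially the same route as the paper: induction on $n$, closure of locally continuous functors under composition, and the continuity of horizontal composition. The paper simply cites \cref{lemma:main:6} (local continuity of the composition functor) for the inductive step, whereas you re-derive its content componentwise via the formula $(\eta^{(n+1)})_A = G(\eta^{(n)}_A)\circ\eta_{F^nA}$; both are fine.
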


\begin{proof}
  The result follows by induction on $n$, \cref{lemma:main:6}, and the fact that locally continuous functors are closed under composition.\qed
\end{proof}

\begin{lemma}
  \label{lemma:main:4}
  Let $\mb{K}$ be an \O-category and $F, G, H : \mb{K} \to \mb{K}$ functors.
  Let $\eta : F \nto G$ and $\rho : G \nto H$ be natural transformations.
  Then for all $n \geq 0$, $(\rho \circ \eta)^{(n)} = \rho^{(n)} \circ \eta^{(n)} : F^n \nto H^n$.
\end{lemma}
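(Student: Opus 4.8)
The plan is to proceed by induction on $n$, using the recursive definition $\eta^{(i+1)} = \eta \ast \eta^{(i)}$ of the horizontal iterates together with the interchange law relating horizontal and vertical composition of natural transformations. For the base case $n = 0$, each of $(\rho \circ \eta)^{(0)}$, $\rho^{(0)}$, and $\eta^{(0)}$ is by definition the identity natural transformation on $\ms{id}_{\mb{K}}$. Hence $\rho^{(0)} \circ \eta^{(0)}$ is the vertical composite of this identity with itself, which is again the identity, so the two sides agree.

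For the inductive step, I would assume $(\rho \circ \eta)^{(n)} = \rho^{(n)} \circ \eta^{(n)}$ and compute
\[
  (\rho \circ \eta)^{(n+1)} = (\rho \circ \eta) \ast (\rho \circ \eta)^{(n)} = (\rho \circ \eta) \ast (\rho^{(n)} \circ \eta^{(n)}),
\]
the first equality being the defining recursion and the second the induction hypothesis. The interchange law (the middle-four exchange for the 2-category of functors and natural transformations), applied to the vertical composites $\rho \circ \eta : F \nto H$ and $\rho^{(n)} \circ \eta^{(n)} : F^n \nto H^n$, rewrites this horizontal composite of vertical composites as the vertical composite of horizontal composites:
\[
  (\rho \circ \eta) \ast (\rho^{(n)} \circ \eta^{(n)}) = (\rho \ast \rho^{(n)}) \circ (\eta \ast \eta^{(n)}) = \rho^{(n+1)} \circ \eta^{(n+1)},
\]
where the last equality is once more the defining recursion. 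This closes the induction.

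The only care needed is in the bookkeeping of types: $\eta^{(n)} : F^n \nto G^n$ and $\rho^{(n)} : G^n \nto H^n$ compose vertically to $F^n \nto H^n$, while the horizontal composites $\eta \ast \eta^{(n)}$ and $\rho \ast \rho^{(n)}$ have source/target $F^{n+1}, G^{n+1}$ and $G^{n+1}, H^{n+1}$ respectively, so that every composite above is well-formed and the interchange law applies on the nose. No order-theoretic structure of $\mb{K}$ enters; the statement concerns only the 2-categorical structure, so the computation is the standard interchange argument and I do not expect any genuine obstacle, merely the need to keep the two composition operations and their domains straight.
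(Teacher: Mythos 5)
Your proof is correct and follows essentially the same route as the paper's: induction on $n$ with the base case handled by the definition $\eta^{(0)} = \ms{id}$, and the inductive step resolved by the middle-four interchange law applied to $(\rho \circ \eta) \ast (\rho^{(n)} \circ \eta^{(n)})$. The only cosmetic difference is that the paper frames the induction componentwise at an object $K$ before carrying out the same natural-transformation-level computation.
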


\begin{proof}
  Let $K$ be an object of $\mb{K}$.
  We proceed by induction on $n$ to show that $(\rho \circ \eta)^{(n)}_K = (\rho^{(n)} \circ \eta^{(n)})_K : F^nK \to H^nK$.
  When $n = 0$, $F^0K = K = H^0K$ and $(\rho \circ \eta)^{(0)}_K = \ms{id}_K = (\rho^{(0)} \circ \eta^{(0)})_K$.
  Assume the result for some $n$, and consider the case $n + 1$.
  By the induction hypothesis and the middle four interchange law \cites[Lemma~1.7.7]{riehl_2016:_categ_theor_contex}[43]{maclane_1998:_categ_workin_mathem},
  \begin{align*}
    &(\rho \circ \eta)^{(n + 1)}\\
    &= (\rho \circ \eta) \ast (\rho \circ \eta)^{(n)}\\
    &= (\rho \circ \eta) \ast \left(\rho^{(n)} \circ \eta^{(n)}\right)\\
    &= \left(\rho \ast \rho^{(n)}\right) \circ \left(\eta \ast \eta^{(n)}\right)\\
    &= \rho^{(n + 1)} \circ \eta^{(n + 1)}.
  \end{align*}
  We conclude the result by induction.\qed
\end{proof}

\section{Proofs for \cref{sec:background-notation}}
\label{sec:proofs-background-notation}

\propDD*

\begin{proof}
  The proof of the first property is as in \cite[Proposition~A]{smyth_plotkin_1982:_categ_theor_solut}.
  Assume that $\beta : \Phi \nto B$ is a cocone in $\mb{K}^e$.
  We show that $(\alpha_n \circ \beta_n^p)_{n \in \N}$ is an ascending chain in $\mb{K}(A, B)$.
  Recall that, by definition of cocone, $\beta_n = \beta_m \circ \Phi(n \to m)$ for all $m \geq n$.
  For all $n \in \N$, we then have:
  \begin{align*}
    &\alpha_n \circ \beta_n^p\\
    &= (\alpha_{n + 1} \circ \Phi(n \to n + 1)) \circ (\beta_{n + 1} \circ \Phi(n \to n + 1))^p\\
    &= \alpha_{n + 1} \circ (\Phi(n \to n + 1) \circ \Phi(n \to n + 1)^p) \circ \beta_{n + 1}^p\\
    &\sqsubseteq \alpha_{n + 1} \circ \beta_{n + 1}^p.
  \end{align*}
  So its directed supremum $\theta$ exists.

  Set $\theta = \dirsup_{n \in \N} \alpha_n \circ \beta_n^p$.
  We show that $\theta : (\beta, B) \to (\alpha, A)$ is a morphism in $\cel{\Cone_{\mb{K}}(\Phi, {-})}$.
  This means that for all $n$, $\alpha_n = \theta \circ \beta_n$.
  Observe that for all $n$,
  \begin{align*}
    &\theta \circ \beta_n\\
    &= \left(\dirsup_{m \in \N} \alpha_m \circ \beta_m^p\right) \circ \beta_n\\
    &= \dirsup_{m \geq n} \alpha_m \circ \beta_m^p \circ \beta_n\\
    &= \dirsup_{m \geq n} \alpha_m \circ \beta_m^p \circ \beta_m \circ \Phi(n \to m)\\
    &= \dirsup_{m \geq n} \alpha_m \circ \Phi(n \to m)\\
    &= \dirsup_{m \geq n} \alpha_n\\
    &= \alpha_n.
  \end{align*}
  This completes the proof of the first property.

  The proof of the second property is as in \cite[Proposition~A]{smyth_plotkin_1982:_categ_theor_solut}.
  Assume that $\beta : \Phi \nto B$ is an \O-colimit.
  We must show that $\theta$ is an embedding, \ie, that $\theta^p \circ \theta = \ms{id}_B$ and $\theta \circ \theta^p \sqsubseteq \ms{id}_A$.
  In the first case, we use the assumption that $\beta$ is an \O-colimit to get:
  \begin{align*}
    &\theta^p \circ \theta\\
    &= \left(\dirsup_{n \in \N} \alpha_n \circ \beta_n^p\right)^p \circ \left(\dirsup_{n \in \N} \alpha_n \circ \beta_n^p\right)\\
    &= \dirsup_{n \in \N} \left(\alpha_n \circ \beta_n^p\right)^p \circ \left(\dirsup_{n \in \N} \alpha_n \circ \beta_n^p\right)\\
    &= \dirsup_{n \in \N} \left(\alpha_n \circ \beta_n^p\right)^p \circ \alpha_n \circ \beta_n^p\\
    &= \dirsup_{n \in \N} \beta_n \circ \alpha_n^p \circ \alpha_n \circ \beta_n^p\\
    &= \dirsup_{n \in \N} \beta_n \circ \beta_n^p\\
    &= \ms{id}_B.
  \end{align*}
  In the second case,
  \begin{align*}
    &\theta \circ \theta^p\\
    &= \left(\dirsup_{n \in \N} \alpha_n \circ \beta_n^p\right) \circ \left(\dirsup_{n \in \N} \alpha_n \circ \beta_n^p\right)^p\\
    &= \left(\dirsup_{n \in \N} \alpha_n \circ \beta_n^p\right) \circ \dirsup_{n \in \N} \left(\alpha_n \circ \beta_n^p\right)^p\\
    &= \dirsup_{n \in \N} \alpha_n \circ \beta_n^p \circ \left(\alpha_n \circ \beta_n^p\right)^p\\
    &= \dirsup_{n \in \N} \alpha_n \circ \beta_n^p \circ \beta_n \circ \alpha_n^p\\
    &= \dirsup_{n \in \N} \alpha_n \circ \alpha_n^p\\
    &\sqsubseteq \ms{id}_A.
  \end{align*}
  This completes the proof of the second property.

  The proof of the third property is as in \cite[Proposition~A]{smyth_plotkin_1982:_categ_theor_solut}.
  Assume that $\alpha$ is an \O-colimit.
  By definition, this means $\alpha$ lies in $\mb{K}^e$.
  We show that $\alpha$ is colimiting in $\mb{K}$ and $\mb{K}^e$.
  Consider some other cocone $\delta : \Phi \nto D$ in $\mb{K}$.
  There exists a cocone morphism $\theta = \dirsup_{n \in \N} \delta_n \circ \alpha_n^p : (\alpha, A) \to (\delta, D)$ by the first property.
  We show that it is unique.
  Let $\gamma : (\alpha, A) \to (\delta, D)$ be any other cocone morphism.
  Then
  \[
    \gamma = \gamma \circ \ms{id}_A = \gamma \circ \left(\dirsup_{n \in \N} \alpha_n \circ \alpha_n^p\right) = \dirsup_{n \in \N} (\gamma \circ \alpha_n) \circ \alpha_n^p = \dirsup_{n \in \N} \delta_n \circ \alpha_n^p = \theta.
  \]
  We conclude that $\alpha$ is colimiting in $\mb{K}$.
  The category $\mb{K}^e$ is a subcategory of $\mb{K}$ and the mediating morphism $\theta$ is an embedding by the second property.
  It follows that $\alpha$ is also colimiting in $\mb{K}^e$
  This completes the proof of the third property.

  The fourth property is exactly \cite[Proposition~D]{smyth_plotkin_1982:_categ_theor_solut} and is not reproduced here.\qed
\end{proof}

\section{Proofs for \cref{sec:funct-fixed-points}}
\label{sec:proofs-funct-fixed-points}

\propMainB*

\begin{proof}
  In this proof we show that:
  \begin{enumerate}
  \item $\Omega\lk{K}{k}{F}$ is a well-defined functor $\omega \to \mb{K}^e$ for all links $\lk{K}{k}{F}$;
  \item $\Omega(f, \eta)$ is natural;
  \item $\Omega$ respects composition;
  \item $\Omega$ is locally continuous;
  \item $\Omega(f, \eta)$ lies in $\mb{K}^e$ whenever $f$ and $\eta$ do.
  \end{enumerate}
  Let $\lk{K}{k}{F}$ be an arbitrary link and abbreviate $\Omega\lk{K}{k}{F}$ by $J$.
  We must show that $J$ is a well-defined functor $\omega \to \mb{K}^e$.
  It preserves identities by \cref{eq:main:4}.
  We must show that it respects composition.
  Let $l \to l + m$ and $l + m \to l + m + n$ be arbitrary, and note that $l \to l + m + n = (l + m \to l + m + n) \circ (l \to l + m)$.
  We must show that $J(l \to l + m + n) = J(l + m \to l + m + n) \circ J(l \to l + m)$.
  We proceed by nested strong induction on $n$.
  Assume first $n = 0$, then by \cref{eq:main:4},
  \begin{align*}
    &J(l \to l + m + n)\\
    &= J(l \to l + m)\\
    &= \ms{id}_{F^{l + m}K} \circ J(l \to l + m)\\
    &= J(l + m \to l + m) \circ J(l \to l + m)\\
    &= J(l + m \to l + m + n) \circ J(l \to l + m).
  \end{align*}
  Now assume the result for some $n$, then by \cref{eq:main:2},
  \begin{align*}
    &J(l \to l + m + (n + 1))\\
    &= F^{l + m + n}k \circ J(l \to l + m + n)\\
    &= F^{l + m + n}k \circ \left(J(l + m \to l + m + n) \circ J(l \to l + m)\right)\\
    &= (F^{l + m + n}k \circ J(l + m \to l + m + n)) \circ J(l \to l + m)\\
    &= J(l + m \to l + m + (n + 1)) \circ J(l \to l + m).
  \end{align*}
  We conclude the result by induction.

  Next, we must show that $\Omega$ is well-defined on morphisms.
  Let $(f, \eta) : \lk{K}{k}{F} \to \lk{L}{l}{G}$ be arbitrary.
  We must show that $\Omega(f, \eta) : \Omega\lk{K}{k}{F} \nto \Omega\lk{L}{l}{G}$ is a natural transformation.
  Let $n \to n + m$ be an arbitrary morphism of $\omega$.
  We must show that the following diagram commutes:
  \begin{equation}
    \label[diagram]{eq:main:5}
    \begin{tikzcd}[column sep=4em]
      F^nK \ar[r, "{\Omega(f, \eta)_n}"] \ar[d, swap, "{\Omega(K,F,k)(n \to n + m)}"] & G^nL \ar[d, "{\Omega(L,G,l)(n \to n + m)}"]\\
      F^{n + m}K \ar[r, "{\Omega(f, \eta)_{n + m}}"] & G^{n + m}L
    \end{tikzcd}
  \end{equation}
  We proceed by induction on $m$.
  Assume first that $m = 0$, then \cref{eq:main:5} becomes
  \[
    \begin{tikzcd}[column sep=4em]
      F^nK \ar[r, "{\Omega(f, \eta)_n}"] \ar[d, swap, "{\ms{id}}"] & G^nL \ar[d, "{\ms{id}}"]\\
      F^{n}K \ar[r, "{\Omega(f, \eta)_{n}}"] & G^{n}L
    \end{tikzcd}
  \]
  and clearly commutes.
  Assume the result for some $m$, and consider the case $m + 1$.
  We must show that the following diagram commutes:
  \[
    \begin{tikzcd}[column sep=5em]
      F^nK \ar[r, "{\Omega(f, \eta)_n}"] \ar[d, swap, "{\Omega(K,F,k)(n \to n + m + 1)}"] & G^nL \ar[d, "{\Omega(L,G,l)(n \to n + m + 1)}"]\\
      F^{n + m + 1}K \ar[r, "{\Omega(f, \eta)_{n + m + 1}}"] & G^{n + m + 1}L
    \end{tikzcd}
  \]
  By \cref{eq:main:2}, this diagram is equal to the outer rectangle of the following diagram:
  \begin{equation}
    \label[diagram]{eq:main:6}
    \begin{tikzcd}[column sep=5em]
      F^nK \ar[r, "{\Omega(f, \eta)_n}"] \ar[d, swap, "{\Omega(K,F,k)(n \to n + m)}"] & G^nL \ar[d, "{\Omega(L,G,l)(n \to n + m)}"]\\
      F^{n + m}K \ar[r, "{\Omega(f, \eta)_{n + m}}"] \ar[d, swap, "{F^{n + m}k}"] & G^{n + m}L \ar[d, "{G^{n + m}l}"]\\
      F^{n + m + 1}K \ar[r, "{\Omega(f, \eta)_{n + m + 1}}"] & G^{n + m + 1}L
    \end{tikzcd}
  \end{equation}
  The top square commutes by the induction hypothesis.
  The middle horizontal morphism is $\Omega(f, \eta)_{n + m} = \eta^{(n + m)} \ast f = G^{n + m}f \circ \eta^{(n + m)}_K$.
  Horizontal composition is associative, and the bottom morphism is $\Omega(f, \eta)_{n + m + 1} = \eta^{(n + m + 1)} \ast f = \eta \ast \eta^{(n + m)} \ast f = \eta^{(n + m)} \ast \eta \ast f$.
  The bottom square of \cref{eq:main:6} is equal to the outer rectangle of the following diagram:
  \begin{equation}
    \label[diagram]{eq:main:7}
    \begin{tikzcd}[column sep=5em]
      F^{n + m}K \ar[r, "{\eta^{(n + m)}_K}"] \ar[d, swap, "{F^{n + m}k}"] & G^{n + m}K \ar[r, "{G^{n + m}f}"] \ar[d, "{G^{n + m}k}"] & G^{n + m}L \ar[d, "{G^{n + m}l}"]\\
      F^{n + m + 1}K \ar[r, "{\eta^{(n + m)}_{FK}}"] & G^{n + m}FK \ar[r, "{G^{n + m}(\eta \ast f)}"] & G^{n + m + 1}L
    \end{tikzcd}
  \end{equation}
  The left square of \cref{eq:main:7} commutes by naturality of $\eta^{(n + m)}$.
  The right square commutes because $l \circ f = (\eta \ast f) \circ k$, which holds because $(f, \eta)$ is a morphism.
  So \cref{eq:main:7} commutes.
  We conclude that \cref{eq:main:6} commutes.

  Next, we must show that $\Omega$ respects composition.
  Let $(f, \eta) : \lk{K}{k}{F} \to \lk{L}{l}{G}$ and $(g, \rho) : \lk{L}{l}{G} \to \lk{M}{m}{H}$ be arbitrary morphisms.
  We must show that $\Omega(g \circ f, \rho \circ \eta) = \Omega(g, \rho) \circ \Omega(f, \eta)$.
  This entails showing for all $n \in \N$ that $\Omega(g \circ f, \rho \circ \eta)_n = \Omega(g, \rho)_n \circ \Omega(f, \eta)_n$.
  We proceed by induction on $n$.
  When $n = 0$, $\Omega(g \circ f, \rho \circ \eta)_0 = g \circ f = \Omega(g, \rho)_0 \circ \Omega(f, \eta)_0$.
  Assume the result for some $n$, and consider the case $n + 1$.
  Then $\Omega(g \circ f, \rho \circ \eta)_{n + 1} = (\rho \circ \eta)^{n + 1} \ast (g \circ f)$ is the diagonal of the following commuting square:
  \[
    \begin{tikzcd}[column sep=6em]
      F^{n + 1}K \ar[r, "{(\rho \circ \eta)^{(n + 1)}_K}"] \ar[d, swap, "{F^{n + 1}(g \circ f)}"]
      \ar[dr, dashed, "{\Omega(g \circ f, \rho \circ \eta)_{n + 1}}" description]
      & H^{n + 1}K \ar[d, "{H^{n + 1}(g \circ f)}"]\\
      F^{n + 1}M \ar[r, swap, "{(\rho \circ \eta)^{(n + 1)}_M}"] & H^{n + 1}M.
    \end{tikzcd}
  \]
  By \cref{lemma:main:4}, $(\rho \circ \eta)^{(n + 1)} = \rho^{(n + 1)} \circ \eta^{(n + 1)}$.
  We recognize the above diagram as the perimeter and diagonal of the following commuting diagram:
  \[
    \begin{tikzcd}[column sep=6em]
      F^{n + 1}K \ar[r, "{\eta^{(n + 1)}_K}"] \ar[d, swap, "{F^{n + 1}f}"] \ar[dr, dashed, "{\Omega(f, \eta)_{n + 1}}" description]
      & G^{n + 1}K \ar[r, "{\rho^{(n + 1)}_K}"] \ar[dr, dashed, "{\Omega(f, \rho)_{n + 1}}" description] \ar[d, "{G^{n + 1}f}" description]
      & H^{n + 1}K \ar[d, "{H^{n + 1}f}"]\\
      F^{n + 1}L \ar[r, swap, "{\eta^{(n + 1)}_L}" description]  \ar[d, swap, "{F^{n + 1}g}"] \ar[dr, dashed, "{\Omega(g, \eta)_{n + 1}}" description]
      & G^{n + 1}L \ar[r, "{\rho^{(n + 1)}_L}" description] \ar[dr, dashed, "{\Omega(g, \rho)_{n + 1}}" description] \ar[d, "{G^{n + 1}g}" description]
      & H^{n + 1}L \ar[d, "{H^{n + 1}f}"]\\
      F^{n + 1}M \ar[r, swap, "{\eta^{(n + 1)}_M}"]
      & G^{n + 1}M \ar[r, swap, "{\rho^{(n + 1)}_M}"]
      & H^{n + 1}M
    \end{tikzcd}
  \]
  That is, we have $\Omega(g \circ f, \rho \circ \eta)_{n + 1} = (\Omega(g, \rho) \circ \Omega(f, \eta))_{n + 1}$.
  We conclude by induction that $\Omega(g \circ f, \rho \circ \eta) = \Omega(g, \rho) \circ \Omega(f, \eta)$.

  We show that $\Omega$ is locally continuous.
  Now consider a directed set $A$ of morphisms $\lk{K}{k}{F} \to \lk{L}{l}{G}$.
  We must show that $\dirsup_{(f, \eta) \in A} \Omega(f, \eta) = \Omega\left(\dirsup_{(f, \eta) \in A} (f, \eta)\right)$.
  We have for all $n \in \N$:
  \begin{align*}
    &\left(\dirsup_{(f, \eta) \in A} \Omega(f, \eta)\right)_n\\
    &= \dirsup_{(f, \eta) \in A} \Omega(f, \eta)_n\\
    &= \dirsup_{(f, \eta) \in A} \eta^{(n)} \ast f\\
    &= \dirsup_{(f, \eta) \in A} G^nf \circ \eta^{(n)}_K\\
    &= \left(\dirsup_{(f, \eta) \in A} G^nf\right) \circ \left(\dirsup_{(f, \eta) \in A} \eta^{(n)}_K\right)\\
    \shortintertext{which by local continuity of $G^n$,}
    &=  G^n\left(\dirsup_{(f, \eta) \in A} f\right) \circ \left(\dirsup_{(f, \eta) \in A} \eta^{(n)}_K\right)\\
    \shortintertext{which by \cref{lemma:main:1},}
    &=  G^n\left(\dirsup_{(f, \eta) \in A} f\right) \circ \left(\dirsup_{(f, \eta) \in A} \eta_K\right)^{(n)}\\
    &=  G^n\left(\dirsup_{(f, \eta) \in A} f\right) \circ \left(\dirsup_{(f, \eta) \in A} \eta\right)_K^{(n)}\\
    &= \left(\dirsup_{(f, \eta) \in A} \eta\right) \ast \left(\dirsup_{(f, \eta) \in A} f\right)\\
    &= \Omega\left(\dirsup_{(f, \eta) \in A} (f, \eta)\right).
  \end{align*}
  We conclude local continuity.

  Finally, let $(f, \eta) : \lk{K}{k}{F} \to \lk{L}{l}{G}$ be arbitrary, and assume $f$ and $\eta$ lie in $\mb{K}^e$.
  We show that $\Omega(f, \eta)$ lies in $\mb{K}^e$.
  To do so, we must show that $\Omega(f, \eta)_n$ lies in $\mb{K}^e$ for all $n \in \N$.
  Let $n \in \N$ be arbitrary.
  Then $\Omega(f, \eta)_n = \eta^{(n)} \ast f = G^nf \circ \eta^{(n)}_K$.
  By induction on $n$ and \cref{prop:42}, $\eta^{(n)}_K$ is an embedding.
  Locally continuous functors preserve embeddings by \cref{lemma:15}, so $G^nf$ is an embedding.
  Embeddings are closed under composition, so we conclude that $\Omega(f, \eta)_n$ is an embedding.
  \qed
\end{proof}

\Cref{prop:main:10} is a corollary of \cref{prop:33}.

\propMainBA*

\begin{proof}
  The first part of this corollary is a special case of \cite[Proposition~3.6.1]{riehl_2016:_categ_theor_contex}.
  Consider two $\omega$-chains $\Phi$ and $\Gamma$ in $\mb{K}^e$.
  Let $\phi : \Phi \nto \colim_\omega \Phi$ and $\gamma : \Gamma \nto \colim_\omega \Gamma$ be the chosen \O-colimits.
  Then $\gamma \circ \eta : \Phi \nto \colim_\omega \Gamma$ is again a cocone by naturality of $\eta$.
  By \cref{prop:33}, the unique mediating morphism from $\phi$ to $\gamma \circ \eta$ is exactly $\colim_\omega\eta = \dirsup_{n \in \N} \gamma_n \circ \eta_n \circ \phi_n^p$.
  Because $\phi$ is an \O-colimit, $\colim_\omega\eta$ is an embedding, again by \cref{prop:33}.
  This mapping on morphisms is functorial by uniqueness of mediating morphisms.

  Next, we show that $\colim_\omega$ is locally continuous.
  Let $\Phi$ and $\Gamma$ be arbitrary $\omega$-chains in $\mb{K}^e$ and let $\phi$ and $\gamma$ be their respective chosen \O-colimits.
  By the above, both are \O-colimits.
  Let $A$ by a directed set of natural transformations from $\Phi$ to $\Gamma$ in $\mb{K}$ (or in $\mb{K}^e$ if $\mb{K}$ has locally determined $\omega$-colimits of embeddings).
  We must show that $\colim_\omega \left(\dirsup A\right) = \dirsup \left(\colim_\omega A\right)$.
  By the above, we have
  \begin{align*}
    &\colim_\omega \left(\dirsup A\right)\\
    &= \dirsup_{n \in \N} \gamma_n \circ \left(\dirsup A\right)_n \circ \phi_n^p\\
    &= \dirsup_{n \in \N} \gamma_n \circ \left(\dirsup_{\alpha \in A} \alpha_n\right) \circ \phi_n^p\\
    &= \dirsup_{n \in \N} \dirsup_{\alpha \in A} \gamma_n \circ \alpha_n \circ \phi_n^p \\
    &= \dirsup_{\alpha \in A} \dirsup_{n \in \N} \gamma_n \circ \alpha_n \circ \phi_n^p \\
    &= \dirsup_{\alpha \in A} \left(\colim_\omega \alpha\right)\\
    &= \dirsup \left(\colim_\omega A\right).
  \end{align*}
  We conclude that $\colim_\omega$ is locally continuous.\qed
\end{proof}

\propMainI*

\begin{proof}
  The composition is clearly well defined.
  Assume $\mb{K}$ has locally determined $\omega$-colimits of embeddings.
  The functor $\Omega$ is locally continuous by \cref{prop:main:1}.
  The $\omega$-colimit functor is locally continuous by \cref{prop:main:10}.
  Locally continuous functors are closed under composition, so we conclude $\GFIX$ is locally continuous.
  The action of $\GFIX$ on morphisms is given by \cref{prop:main:10}.
\end{proof}

\propMainC*

\begin{proof}
  We begin by showing that $\UNF$ is well-defined on morphisms.
  Let $(f, \eta) : \lk{K}{k}{F} \to \lk{L}{l}{G}$ be arbitrary.
  We show that $\UNF(f, \eta) : \UNF\lk{K}{k}{F} \to \UNF\lk{L}{l}{G}$ is a morphism of $\mb{K}^e$.
  Let $\phi : \Omega\lk{K}{k}{F} \nto \GFIX\lk{K}{k}{F}$ and $\gamma : \Omega\lk{L}{l}{G} \nto \GFIX\lk{L}{l}{G}$ be the chosen \O-colimits in $\mb{K}$.
  Observe that $F\Omega\lk{K}{k}{F}(n) = \Omega\lk{K}{k}{F}(n + 1)$ and $F\Omega\lk{K}{k}{F}(n \to m) = \Omega\lk{K}{k}{F}(n + 1 \to m + 1)$, and analogously for $G\Omega\lk{L}{l}{G}$.
  Observe that $\Omega(f, \eta) : \Omega\lk{K}{k}{F} \nto \Omega\lk{L}{l}{G}$ is a natural transformation and that reindexing gives a natural transformation $(\Omega(f, \eta)_{n + 1})_n : F\Omega\lk{K}{k}{F} \nto G\Omega\lk{L}{l}{G}$.
  Note that $G\gamma$ is a cocone on $G\Omega\lk{L}{l}{G}$.
  It follows that $G\gamma \circ (\Omega(f, \eta)_{n + 1})_n : F\Omega\lk{K}{k}{F} \nto G(\GFIX\lk{L}{l}{G})$ is also a cocone on $F\Omega\lk{K}{k}{F}$.
  But $F\phi$ is also a cocone on $F\Omega\lk{K}{k}{F}$.
  By \cref{prop:33}, it follows that
  \begin{equation}
    \label{eq:main:8}
    \dirsup_{n \in \N} \left(G\gamma \circ (\Omega(f, \eta)_{m + 1})_m\right)_n \circ (F\phi)^p_n
  \end{equation}
  is a morphism $(F\phi, F\GFIX\lk{K}{k}{F}) \to (G\gamma \circ (\Omega(f, \eta)_{n + 1})_n, G(\GFIX\lk{L}{l}{G}))$ in $\cel{\Cone(F\Omega\lk{K}{k}{F}, {-})}$.
  We observe that \cref{eq:main:8} is $\UNF(f, \eta)$:
  \begin{align*}
    &\dirsup_{n \in \N} \left(G\gamma \circ (\Omega(f, \eta)_{m + 1})_m\right)_n \circ (F\phi)^p_n\\
    &= \dirsup_{n \in \N} G\gamma_n \circ \Omega(f, \eta)_{n + 1} \circ F\phi^p_n\\
    &= \UNF(f, \eta).
  \end{align*}
  Locally continuous functors preserve \O-colimits, so $F\phi$ is again an \O-colimit.
  In this case, \cref{eq:main:8} is an embedding by \cref{prop:33}.
  It follows that $\UNF : \Links_{\mb{K}} \to \mb{K}^e$ whenever $\mb{K}$ has locally determined $\omega$-colimits of embeddings.

  Next we show that $\UNF$ is locally continuous.
  Let $A$ be a directed set of morphisms $\lk{K}{k}{F} \to \lk{L}{l}{G}$.
  We must show that $\UNF\left(\dirsup A\right) = \dirsup_{(f, \eta) \in A} \UNF(f, \eta)$.
  Recall that $\Omega$ is locally continuous by \cref{prop:main:1}.
  We compute that:
  \begin{align*}
    &\dirsup_{(f, \eta) \in A} \UNF(f, \eta)\\
    &= \dirsup_{(f, \eta) \in A} \dirsup_{n \in \N} G\gamma_n \circ \Omega(f, \eta)_{n + 1} \circ F\phi^p_n\\
    &= \dirsup_{n \in \N} G\gamma_n \circ \left(\dirsup_{(f, \eta) \in A} \Omega(f, \eta)_{n + 1} \right) \circ F\phi^p_n\\
    &= \dirsup_{n \in \N} G\gamma_n \circ \left(\dirsup_{(f, \eta) \in A} \Omega(f, \eta)\right)_{n + 1} \circ F\phi^p_n\\
    &= \dirsup_{n \in \N} G\gamma_n \circ \Omega\left(\dirsup A\right)_{n + 1} \circ F\phi^p_n\\
    &= \UNF\left(\dirsup A\right).
  \end{align*}
  We conclude that $\UNF$ is locally continuous.\qed
\end{proof}

\begin{proposition}[{\Cref{prop:main:3}}]
  \label{prop:main:5}
  Let $\mb{K}$ be an \O-cocomplete \O-category.
  There exists a natural isomorphism $\ms{fold} : \UNF \nto \GFIX$ with inverse $\ms{unfold} : \GFIX \nto \UNF$.
  They are explicitly given as follows.
  Let $\lk{K}{k}{F}$ be an object of $\Links_{\mb{K}}$ and let $\kappa : \Omega\lk{K}{k}{F} \nto \GFIX\lk{K}{k}{F}$ be the chosen \O-colimit.
  The components are:
  \begin{align*}
    \ms{fold}_{\lk{K}{k}{F}} &= \dirsup_{n \in \N} \kappa_{n+1} \circ F\kappa_n^p :  F(\GFIX\lk{K}{k}{F}) \to \GFIX\lk{K}{k}{F}\\
    \ms{unfold}_{\lk{K}{k}{F}} &= \dirsup_{n \in \N} F\kappa_n \circ \kappa_{n+1}^p : \GFIX\lk{K}{k}{F} \to F(\GFIX\lk{K}{k}{F}).
  \end{align*}
  Naturality means that for every $(f, \eta) : \lk{K}{k}{F} \to \lk{L}{l}{G}$, the following diagram commutes:
  \begin{equation}%
    \begin{tikzcd}[column sep=4em]%
      F(\GFIX\lk{K}{k}{F}) \ar[r, shift left=0.5ex, "{\ms{fold}_{\lk{K}{k}{F}}}"] \ar[d, swap, "{\UNF(f, \eta)}"]  & \GFIX\lk{K}{k}{F} \ar[l, shift left=0.5ex, "{\ms{unfold}_{\lk{K}{k}{F}}}"]  \ar[d, "{\GFIX(f, \eta)}"]\\%
      G(\GFIX\lk{L}{l}{G}) \ar[r, shift left=0.5ex, "{\ms{fold}_{\lk{L}{l}{G}}}"] & \GFIX\lk{L}{l}{G} \ar[l, shift left=0.5ex, "{\ms{unfold}_{\lk{L}{l}{G}}}"].%
    \end{tikzcd}%
  \end{equation}
\end{proposition}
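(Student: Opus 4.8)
The plan is to realise both $\ms{fold}$ and $\ms{unfold}$ as mediating morphisms between two \O-colimits of the shifted chain $F\Omega\lk{K}{k}{F}$, and then let \cref{prop:33} do the work. Fix a link $\lk{K}{k}{F}$ and let $\kappa : \Omega\lk{K}{k}{F} \nto \GFIX\lk{K}{k}{F}$ be its chosen \O-colimit. First I would observe that $F\kappa : F\Omega\lk{K}{k}{F} \nto F(\GFIX\lk{K}{k}{F})$ is again an \O-colimit, since locally continuous functors preserve \O-colimits (as used in the proof of \cref{prop:main:2}). Second, I would note that the reindexed cocone $(\kappa_{n+1})_n : F\Omega\lk{K}{k}{F} \nto \GFIX\lk{K}{k}{F}$ is also an \O-colimit: its components are embeddings, and the tail $(\kappa_{n+1} \circ \kappa_{n+1}^p)_n$ is cofinal in the ascending chain $(\kappa_n \circ \kappa_n^p)_n$, hence has the same supremum $\ms{id}_{\GFIX\lk{K}{k}{F}}$.

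With these two \O-colimits of $F\Omega\lk{K}{k}{F}$ in hand, \cref{prop:33} identifies both maps. Taking $\beta = F\kappa$ and $\alpha = (\kappa_{n+1})_n$, part~(1) gives that the mediating morphism from $F\kappa$ to $(\kappa_{n+1})_n$ is $\dirsup_n \kappa_{n+1} \circ (F\kappa_n)^p = \dirsup_n \kappa_{n+1} \circ F\kappa_n^p = \ms{fold}_{\lk{K}{k}{F}}$, using $(F\kappa_n)^p = F(\kappa_n^p)$ from \cref{lemma:15}; symmetrically, swapping the roles of $\alpha$ and $\beta$ shows that the mediating morphism from $(\kappa_{n+1})_n$ to $F\kappa$ is $\ms{unfold}_{\lk{K}{k}{F}}$. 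By part~(2) both are embeddings. Since $\ms{fold}$ mediates $F\kappa \to (\kappa_{n+1})_n$ (so $\ms{fold}_{\lk{K}{k}{F}} \circ F\kappa_n = \kappa_{n+1}$) and $\ms{unfold}$ mediates $(\kappa_{n+1})_n \to F\kappa$ (so $\ms{unfold}_{\lk{K}{k}{F}} \circ \kappa_{n+1} = F\kappa_n$), the composite $\ms{unfold} \circ \ms{fold}$ is a mediating endomorphism of the colimit $F\kappa$, hence $\ms{id}_{F(\GFIX\lk{K}{k}{F})}$ by uniqueness, and dually $\ms{fold} \circ \ms{unfold} = \ms{id}_{\GFIX\lk{K}{k}{F}}$. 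Thus $\ms{fold}$ and $\ms{unfold}$ are mutually inverse componentwise.

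For naturality of $\ms{fold}$, consider $(f,\eta) : \lk{K}{k}{F} \to \lk{L}{l}{G}$ with chosen \O-colimits $\phi$ and $\gamma$. Because $F\phi$ is colimiting, it suffices to show that $\GFIX(f,\eta) \circ \ms{fold}_{\lk{K}{k}{F}}$ and $\ms{fold}_{\lk{L}{l}{G}} \circ \UNF(f,\eta)$ agree after precomposition with each $F\phi_n$. For the first, $\ms{fold}_{\lk{K}{k}{F}} \circ F\phi_n = \phi_{n+1}$ together with the mediating characterisation $\GFIX(f,\eta) \circ \phi_m = \gamma_m \circ \Omega(f,\eta)_m$ from \cref{prop:main:8} reduces the composite to $\gamma_{n+1} \circ \Omega(f,\eta)_{n+1}$. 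For the second, the proof of \cref{prop:main:2} shows that $\UNF(f,\eta)$ mediates from $F\phi$ to $G\gamma \circ (\Omega(f,\eta)_{m+1})_m$, so $\UNF(f,\eta) \circ F\phi_n = G\gamma_n \circ \Omega(f,\eta)_{n+1}$; then $\ms{fold}_{\lk{L}{l}{G}} \circ G\gamma_n = \gamma_{n+1}$ yields the same expression $\gamma_{n+1} \circ \Omega(f,\eta)_{n+1}$. Hence the two composites coincide, giving the $\ms{fold}$ square of \cref{eq:main:9}. The $\ms{unfold}$ square follows by inverting the $\ms{fold}$ square, since its vertical and horizontal components are isomorphisms.

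The main obstacle I anticipate is bookkeeping rather than conceptual: correctly pinning down to which cocone each of $\GFIX(f,\eta)$ and $\UNF(f,\eta)$ mediates (re-deriving the latter from the proof of \cref{prop:main:2}) and tracking the index shifts so that both composites land on the common cocone $(\gamma_{n+1} \circ \Omega(f,\eta)_{n+1})_n$. Once the two reindexed cocones are recognised as \O-colimits, everything else is a routine application of the uniqueness of mediating morphisms supplied by \cref{prop:33}.
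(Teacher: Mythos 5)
Your proposal is correct and follows essentially the same route as the paper: both realise $\ms{fold}$ as the mediating morphism, supplied by \cref{prop:33}, from the \O-colimit $F\kappa$ of the shifted chain $F\Omega\lk{K}{k}{F}$ to the reindexed cocone $(\kappa_{n+1})_n$, conclude it is an embedding, and prove naturality only for $\ms{fold}$, deriving the $\ms{unfold}$ square by inversion. The only difference is stylistic: where you discharge the inverse and naturality checks through universal properties (observing that $(\kappa_{n+1})_n$ is itself an \O-colimit and invoking uniqueness of mediating morphisms, respectively joint epicness of the legs $F\phi_n$), the paper computes directly with directed suprema, e.g.\ $\ms{fold}^p = \ms{unfold}$ and $\ms{fold} \circ \ms{unfold} = \dirsup_{n \in \N} \kappa_{n+1} \circ \kappa_{n+1}^p = \ms{id}$.
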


\begin{proof}[{of \cref{prop:main:3}}]
  We first show that for each link $\lk K k F$, $\ms{fold}_{\lk K k F}$ is a morphism $F(\GFIX\lk{K}{k}{F}) \to \GFIX\lk{K}{k}{F}$.
  Let $(\kappa, \GFIX\lk{K}{k}{F})$ be the chosen \O-colimit of $\Omega\lk{K}{k}{F}$.
  Locally continuous functors preserve \O-colimits, so $(F\kappa, F(\GFIX\lk{K}{k}{F}))$ is an \O-colimit of $F\Omega\lk{K}{k}{F}$.
  Let $(\kappa^-, \GFIX\lk{K}{k}{F})$ be the cocone on $F\Omega\lk{K}{k}{F}$ given by $\kappa^-_n = \kappa_{n + 1}$.
  By \cref{prop:33}, the cocone morphism $(F\kappa, F(\GFIX\lk{K}{k}{F})) \to (\kappa^-, \GFIX\lk{K}{k}{F})$ is exactly $\ms{fold}_{\lk K k F}$.
  It is an embedding because the cocone $(F\kappa, F(\GFIX\lk{K}{k}{F}))$ is an \O-colimit.

  Next, we show that $\ms{fold}_{\lk K k F}$ is an isomorphism with inverse $\ms{unfold}_{\lk K k F}$.
  Observe that
  \[
    \ms{fold}_{\lk K k F}^p = \left(\dirsup_{n \in \N} \kappa_{n+1} \circ F\kappa_n^p\right)^p = \dirsup_{n \in \N} F\kappa_n \circ \kappa_{n+1}^p = \ms{unfold}_{\lk{K}{k}{F}}.
  \]
  We already know that $\ms{unfold}_{\lk{K}{k}{F}} \circ \ms{fold}_{\lk{K}{k}{F}} = \ms{id}$ because $\ms{fold}_{\lk{K}{k}{F}}$ is an embedding.
  Using the fact that $\kappa$ is an $\O$-colimit, we compute:
  \begin{align*}
    &\ms{fold}_{\lk{K}{k}{F}} \circ \ms{unfold}_{\lk{K}{k}{F}}\\
    &= \left(\dirsup_{n \in \N} \kappa_{n+1} \circ F\kappa_n^p\right) \circ \left(\dirsup_{n \in \N} F(\kappa_n) \circ \kappa_{n+1}^p\right)\\
    &= \dirsup_{n \in \N} \kappa_{n+1} \circ F\kappa_n^p \circ F\kappa_n \circ \kappa_{n+1}^p\\
    &= \dirsup_{n \in \N} \kappa_{n+1} \circ \kappa_{n+1}^p\\
    &= \ms{id}
  \end{align*}
  We conclude that $\ms{fold}_{\lk K k F}$ is an isomorphism with inverse $\ms{unfold}_{\lk K k F}$.

  Finally we show naturality.
  Let $(f, \eta) : \lk{K}{k}{F} \to \lk{L}{l}{G}$ be an arbitrary morphism of $\Links_{\mb{k}}$.
  We must show that \cref{eq:main:9} commutes.
  Because $\ms{unfold}$ is an isomorphism with inverse $\ms{fold}$, to show naturality of $\ms{fold}$ and $\ms{unfold}$ it is sufficient by~\cite[Lemma~1.5.10]{riehl_2016:_categ_theor_contex} to show that $\ms{fold}$ is natural, \ie, that the following square commutes:
  \[
    \begin{tikzcd}[column sep=4em]
      F(\GFIX\lk{K}{k}{F}) \ar[r, shift left=0.5ex, "{\ms{fold}_{\lk{K}{k}{F}}}"] \ar[d, swap, "{\UNF(f, \eta)}"]  & \GFIX\lk{K}{k}{F} \ar[d, "{\GFIX(f, \eta)}"]\\
      G(\GFIX\lk{L}{l}{G}) \ar[r, shift left=0.5ex, "{\ms{fold}_{\lk{L}{l}{G}}}"] & \GFIX\lk{L}{l}{G}.
    \end{tikzcd}
  \]
  Let $(\phi, \GFIX\lk{K}{k}{F})$ and $(\gamma, \GFIX\lk{L}{l}{G})$ respectively be the colimiting cocones of $\Omega\lk{K}{k}{F}$ and $\Omega\lk{L}{l}{G}$.
  We compute:
  \begin{align*}
    &\GFIX(f, \eta) \circ \ms{fold}_{\lk{K}{k}{F}}\\
    &= \left(\dirsup_{n \in \N} \gamma_n \circ \Omega(f, \eta)_n \circ \phi^p_n\right) \circ \left(\dirsup_{n \in \N} \phi_{n + 1} \circ F\phi_n^p\right)\\
    &= \left(\dirsup_{n \in \N} \gamma_{n + 1} \circ \Omega(f, \eta)_{n + 1} \circ \phi^p_{n + 1}\right) \circ \left(\dirsup_{n \in \N} \phi_{n + 1} \circ F\phi_n^p\right)\\
    &= \dirsup_{n \in \N} \gamma_{n + 1} \circ \Omega(f, \eta)_{n + 1} \circ \phi^p_{n + 1} \circ \phi_{n + 1} \circ F\phi_n^p\\
    &= \dirsup_{n \in \N} \gamma_{n + 1} \circ \Omega(f, \eta)_{n + 1} \circ F\phi_n^p\\
    &= \dirsup_{n \in \N} \gamma_{n + 1} \circ G\gamma_n^p \circ G\gamma_n \circ \Omega(f,\eta)_{n + 1} \circ F\phi_n^p\\
    &= \left(\dirsup_{n \in \N} \gamma_{n + 1} \circ G\gamma_n^p\right) \circ \left(\dirsup_{n \in \N} G\gamma_n \circ \Omega(f,\eta)_{n + 1} \circ F\phi_n^p\right)\\
    &= \ms{fold}_{\lk{L}{l}{G}} \circ \UNF(f, \eta).
  \end{align*}
  We conclude that $\ms{fold}$ is a natural transformation.\qed
\end{proof}

\begin{proposition}
  \label{prop:main:9}
  Let $\mb{K}$ be an \O-category and assume $\mb{K}^e$ has an initial object.
  Let $I : [\mb{K} \lcto \mb{K}] \to \Links_{\mb{K}}$ be the functor given by $I(F) = \lk{\bot}{\bot}{F}$ and $I(\eta) = (\ms{id}_\bot, \eta)$.
  Then $I$ is locally continuous and full and faithful.
\end{proposition}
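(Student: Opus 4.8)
The plan is to check, in order, that $I$ is a well-defined functor, that it is locally continuous, and that it is full and faithful. Write $\bot$ for the initial object of $\mb{K}^e$, and for any object $X$ let $\bot_X : \bot \to X$ denote the unique embedding (it is an embedding since every morphism of $\mb{K}^e$ is one). Well-definedness on objects is immediate: $\lk{\bot}{\bot}{F}$ is a link because the designated morphism $\bot_{F\bot} : \bot \to F\bot$ is an embedding and $F$ is locally continuous by hypothesis.

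The first substantive step is well-definedness on morphisms. Given $\eta : F \nto G$, I must verify that $(\ms{id}_\bot, \eta)$ satisfies the link-morphism identity $l \circ f = (\eta \ast f) \circ k$ with $k = \bot_{F\bot}$, $l = \bot_{G\bot}$ and $f = \ms{id}_\bot$. Since $\eta \ast \ms{id}_\bot = \eta_\bot$, this reduces to the single equation $\bot_{G\bot} = \eta_\bot \circ \bot_{F\bot}$ in $\mb{K}(\bot, G\bot)$. I would dispatch it by the universal property of $\bot$: the two sides are parallel morphisms out of the initial object and are therefore equal. The subtlety here, which I expect to be the main obstacle, is that $\eta_\bot$ need not be an embedding, so one cannot simply invoke naturality of the family $(\bot_X)_X$ inside $\mb{K}^e$; one must instead use that maps out of $\bot$ are uniquely determined, equivalently that $\eta_\bot$ preserves the chosen map $\bot \to F\bot$. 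This same point recurs for fullness below, and it is where the real work lies.

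Functoriality is then routine from the component-wise composition in $\Links_{\mb{K}}$: $I(\ms{id}_F) = (\ms{id}_\bot, \ms{id}_F)$ is the identity on $\lk{\bot}{\bot}{F}$, and $I(\rho \circ \eta) = (\ms{id}_\bot \circ \ms{id}_\bot, \rho \circ \eta) = I(\rho) \circ I(\eta)$. For local continuity, the relevant hom-dcpo of $\Links_{\mb{K}}$ carries the component-wise order with component-wise directed suprema; since $I$ fixes the first component at $\ms{id}_\bot$, for a directed set $A \subseteq [\mb{K} \lcto \mb{K}](F,G)$ we get $\dirsup_{\eta \in A} (\ms{id}_\bot, \eta) = (\ms{id}_\bot, \dirsup A) = I(\dirsup A)$, so $I$ is continuous on hom-sets.

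Finally I would treat faithfulness and fullness. Faithfulness is immediate, since $\eta$ is recovered from $I(\eta) = (\ms{id}_\bot, \eta)$. For fullness, take an arbitrary link-morphism $(f, \eta) : \lk{\bot}{\bot}{F} \to \lk{\bot}{\bot}{G}$; to place it in the image of $I$ it suffices to show $f = \ms{id}_\bot$, for then $(f, \eta) = (\ms{id}_\bot, \eta) = I(\eta)$. As $f$ is an endomorphism of $\bot$, the universal property of the initial object forces $f = \ms{id}_\bot$. Thus both nontrivial points, the morphism-identity $\bot_{G\bot} = \eta_\bot \circ \bot_{F\bot}$ and the uniqueness $f = \ms{id}_\bot$, are assertions about maps out of $\bot$; once these hom-sets are pinned down by initiality, the whole proposition follows.
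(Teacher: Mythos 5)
Your proof follows the same route as the paper's own: the paper likewise disposes of functoriality and local continuity in one line, proves faithfulness by reading $\eta$ off the second component of $I(\eta)$, and proves fullness by arguing that the first component $f : \bot \to \bot$ of any link morphism must equal $\ms{id}_\bot$ by initiality. You go further than the paper in one respect, namely in isolating the well-definedness of $I$ on morphisms --- the identity $\bot_{G\bot} = \eta_\bot \circ \bot_{F\bot}$ --- as the one nontrivial check; the paper subsumes this under ``clearly functorial.''

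The difficulty is that the resolution you offer for that check does not follow from the stated hypothesis. You correctly observe that $\eta_\bot$ need not be an embedding, so $\eta_\bot \circ \bot_{F\bot}$ is a priori only a morphism of $\mb{K}$; but initiality of $\bot$ in $\mb{K}^e$ yields uniqueness only among \emph{embeddings} $\bot \to G\bot$, and therefore cannot identify $\eta_\bot \circ \bot_{F\bot}$ with $\bot_{G\bot}$. Your phrase ``maps out of $\bot$ are uniquely determined'' is precisely the assertion that $\bot$ is initial in $\mb{K}$, which is strictly stronger than what the proposition assumes. The same issue affects fullness: a morphism of $\Links_{\mb{K}}$ only requires its first component $f : \bot \to \bot$ to be a morphism of $\mb{K}$, so concluding $f = \ms{id}_\bot$ again uses initiality in $\mb{K}$, not in $\mb{K}^e$. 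The gap is real: take the one-object \O-category with $\mb{K}(X,X) = \{e, \ms{id}_X\}$, $e \sqsubseteq \ms{id}_X$ and $e \circ e = e$; then $X$ is initial in $\mb{K}^e$ (whose only morphism is $\ms{id}_X$), yet for the natural transformation $\eta : \ms{id}_{\mb{K}} \nto \ms{id}_{\mb{K}}$ with $\eta_X = e$ the pair $(\ms{id}_X, \eta)$ fails the link-morphism equation, while $(e, \eta)$ satisfies it and lies outside the image of $I$. Both points close in the setting where the proposition is actually applied: when $\mb{K}$ supports canonical fixed points, $\bot$ is initial in $\mb{K}$ itself and becomes initial in $\mb{K}^e$ via \cref{prop:main:14}. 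To be fair, the paper's terse proof silently makes the same appeal to initiality in $\mb{K}$; but since you explicitly flagged this step as ``where the real work lies,'' you should either add initiality of $\bot$ in $\mb{K}$ to your hypotheses or restrict attention to categories supporting canonical fixed points.
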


\begin{proof}
  The mapping $I$ is clearly functorial and locally continuous.
  Let $\eta, \rho : F \nto G$ be two morphisms in $[\mb{K} \lcto \mb{K}]$ and assume $I(\eta) = I(\rho)$.
  Then $(\ms{id}_\bot, \eta) = (\ms{id}_\bot, \rho)$.
  It follows that $\eta = \rho$, so $I$ is faithful.

  Let $F, G : \mb{K} \lcto \mb{K}$ be two locally continuous functors.
  Let $(f, \eta) : I(F) \to I(G)$ be a morphism in $\Links_{\mb{K}}$.
  Then $(f, \eta) : \lk{\bot}{\bot}{F} \to \lk{\bot}{\bot}{G}$.
  This implies that $f : \bot \to \bot$.
  But $\bot$s is the initial object, and there exists a unique morphism $\bot \to \bot$, namely, $\ms{id}_\bot$.
  It follows that $(f, \eta) = I(\eta)$.
  We conclude that $I$ is full.
\end{proof}

The definition of strict morphisms is motivated in part by \cref{prop:44}.
Given a subcategory $\mb{P}$ of $\mb{Poset}$, we write $\strc{\mb{P}}$ for the subcategory of $\mb{P}$ whose objects are pointed posets and whose morphisms are bottom-preserving.

\begin{proposition}
  \label{prop:44}
  Let $\mb{P}$ be a subcategory of $\mb{Poset}$.
  If $\mb{P}$ is an $\mb{O}$-category with strict morphisms and its initial object is a singleton poset $\{\ast\}$, then $\mb{P}$ is a subcategory of $\strc{\mb{Poset}}$.
\end{proposition}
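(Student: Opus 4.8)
The plan is to establish the two defining conditions for $\mb{P}$ to be a subcategory of $\strc{\mb{Poset}}$: that every object of $\mb{P}$ is a pointed poset, and that every morphism of $\mb{P}$ preserves least elements. Throughout, write $I = \{\ast\}$ for the initial object, let $!_B : I \to B$ be the unique morphism out of $I$ for each object $B$, and set $\bot_B = {!_B}(\ast) \in B$. The first observation is that $I$ is in fact a zero object: since $\mb{P}$ has zero morphisms, each $0_{AI} : A \to I$ exists in $\mb{P}$, and since $I$ is a singleton, any monotone map $A \to I$ is forced to be the constant map at $\ast$; hence $\mb{P}(A, I)$ is a singleton and $I$ is also terminal. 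In particular $0_{AI}$ is the constant map sending every element to $\ast$.

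Next I would identify the zero morphisms explicitly. Using the zero-morphism axiom $0_{AB} = g \circ 0_{AC}$ with $C = I$ and $g = {!_B}$, we obtain $0_{AB} = {!_B} \circ 0_{AI}$, so evaluating at any $a \in A$ gives $0_{AB}(a) = {!_B}(\ast) = \bot_B$. Thus every zero morphism $0_{AB}$ is the constant map at $\bot_B$; in particular $0_{AA}$ is the constant map at $\bot_A$. This is the routine, purely categorical part of the argument, driven entirely by initiality of the singleton.

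For pointedness I would invoke strictness: $0_{AA}$ is the least element of the dcpo $\mb{P}(A, A)$, so $0_{AA} \sqsubseteq \ms{id}_A$. Because the \O-enrichment on a subcategory of $\mb{Poset}$ is the pointwise order inherited from $\mb{Poset}$, this inequality unwinds to $\bot_A = 0_{AA}(a) \le \ms{id}_A(a) = a$ for every $a \in A$, so $\bot_A$ is the least element of the poset $A$ and every object of $\mb{P}$ is pointed. For bottom-preservation, let $f : A \to B$ be a morphism of $\mb{P}$; the zero-morphism axiom with $g = f$ and $C = A$ gives $0_{AB} = f \circ 0_{AA}$, and evaluating at any $a \in A$ yields $f(\bot_A) = f(0_{AA}(a)) = 0_{AB}(a) = \bot_B$. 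Hence every morphism of $\mb{P}$ preserves least elements. Since identities are trivially bottom-preserving and bottom-preserving maps are closed under the composition of $\mb{Poset}$, these two facts exhibit $\mb{P}$ as a subcategory of $\strc{\mb{Poset}}$.

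The main obstacle is the single step where the abstract order actually matters: relating the \O-enrichment order on the hom-dcpo $\mb{P}(A, A)$ to the pointwise order on the underlying posets, which is exactly what lets the strictness inequality $0_{AA} \sqsubseteq \ms{id}_A$ deliver the concrete inequality $\bot_A \le a$. Everything else is forced by the singleton being initial together with the two zero-morphism equations; the real content lies in pinning down that the enrichment refines the pointwise order.
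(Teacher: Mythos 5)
Your proof is correct, and it reaches the same two milestones as the paper (objects are pointed, morphisms preserve bottoms) by a slightly different route. The paper works with the initial cone $\iota : \{\ast\} \nto \ms{id}_{\mb{P}}$ and argues $\bot_Q = \iota_Q(\iota_Q^p q) = (\iota_Q \circ \iota_Q^p)(q) \sqsubseteq q$, which tacitly relies on $\iota_Q$ being an embedding --- a fact that itself needs strictness and is proved separately as \cref{prop:main:14}; it then gets bottom-preservation from $f \circ \iota_P = \iota_Q$. You instead bypass e-p-pairs entirely: you pin down every zero morphism as the constant map at $\bot_B$ using only initiality of the singleton and the zero-morphism equations, get pointedness from $0_{AA} \sqsubseteq \ms{id}_A$ (which is exactly what strictness asserts), and get bottom-preservation from $f \circ 0_{AA} = 0_{AB}$. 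This is more elementary and makes the logical dependencies cleaner, since the paper's use of $\iota_Q^p$ is really $0_{Q\{\ast\}}$ in disguise. The one step you flag as the ``real content'' --- that $0_{AA} \sqsubseteq \ms{id}_A$ in the hom-dcpo yields the pointwise inequality $0_{AA}(a) \le a$ --- is indeed an unstated hypothesis that the enrichment order refines the pointwise order of $\mb{Poset}$; the paper makes exactly the same tacit assumption at the step $(\iota_Q \circ \iota_Q^p)(q) \sqsubseteq \ms{id}_Q(q)$, so you are on equal footing there, and more candid about it.
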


\begin{proof}
  To avoid confusion, we write $\iota$ for the limiting cone $\bot$.

  We begin by showing that the objects of $\mb{P}$ are pointed posets.
  Let $Q$ be any object of $\mb{P}$.
  We know that $Q$ is non-empty because $\iota_Q(\ast) \in Q$ by initiality.
  Write $\bot_Q$ for the image $\iota_Q(\ast)$.
  We claim that for all elements $q \in Q$, $\bot_Q \sqsubseteq q$.
  Indeed, by definition of e-p-pair, $\iota_Q \circ \iota_Q^p \sqsubseteq \ms{id}_Q$, so
  \[
    \bot_Q = \iota_Q(\ast) = \iota_Q(\iota_Q^pq) = (\iota_Q \circ \iota_Q^p)(q) \sqsubseteq \ms{id}_Q(q) = q.
  \]
  We conclude that $Q$ is a pointed partial order.

  Next, we show that all morphisms are bottom-preserving.
  Let $f : P \to Q$ be arbitrary.
  By initiality, $f \circ \iota_P = \iota_Q$, so $(f \circ \iota_P)(\ast) = \iota_Q(\ast)$, \ie, $f(\bot_P) = \bot_Q$.

  We conclude that $\mb{P}$ is a subcategory of $\strc{\mb{Poset}}$.\qed
\end{proof}

\begin{proposition}
  \label{prop:main:14}
  If $\mb{K}$ is an \O-category with strict morphisms and an initial object $\bot : \bot_{\mb{K}} \nto \ms{id}_{\mb{K}}$, then $\bot$ is also initial in $\mb{K}^e$.
\end{proposition}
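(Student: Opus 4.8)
The plan is to show that for each object $A$ of $\mb{K}$, the unique morphism $\bot_A : \bot_{\mb{K}} \to A$ supplied by initiality in $\mb{K}$ is in fact an embedding, and that it is the only embedding $\bot_{\mb{K}} \to A$. Uniqueness requires no work: since $\mb{K}^e$ is a subcategory of $\mb{K}$, any embedding $\bot_{\mb{K}} \to A$ is in particular a $\mb{K}$-morphism $\bot_{\mb{K}} \to A$, and there is exactly one such morphism by initiality of $\bot_{\mb{K}}$ in $\mb{K}$. So the entire content lies in exhibiting a projection witnessing that $\bot_A$ is an embedding.

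For that projection I would take the zero morphism $p_A = 0_{A\bot_{\mb{K}}} : A \to \bot_{\mb{K}}$, which exists because $\mb{K}$ has strict, and hence zero, morphisms. To check that $(\bot_A, p_A)$ is an e-p-pair I verify the two defining conditions. First, $p_A \circ \bot_A$ is an endomorphism of $\bot_{\mb{K}}$, and since $\bot_{\mb{K}}$ is initial the only such endomorphism is $\ms{id}_{\bot_{\mb{K}}}$, so $p_A \circ \bot_A = \ms{id}_{\bot_{\mb{K}}}$. Second, using the zero-morphism identity $g \circ 0_{AC} = 0_{AD}$ with $g = \bot_A$, I compute $\bot_A \circ p_A = \bot_A \circ 0_{A\bot_{\mb{K}}} = 0_{AA}$. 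Because $\mb{K}$ has strict morphisms, $0_{AA}$ is the least element of $\mb{K}(A, A)$, so in particular $0_{AA} \sqsubseteq \ms{id}_A$, giving $\bot_A \circ p_A \sqsubseteq \ms{id}_A$. Hence $\bot_A$ is an embedding with associated projection $p_A$, i.e., a morphism of $\mb{K}^e$.

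Combining the two parts, $\bot_A$ is the unique embedding $\bot_{\mb{K}} \to A$, so $\bot_{\mb{K}}$ is initial in $\mb{K}^e$. The only genuinely delicate point is locating the projection; the natural candidate—the zero morphism into $\bot_{\mb{K}}$—succeeds precisely because strictness forces $0_{AA}$ to lie below $\ms{id}_A$, and this is the single place where the hypothesis of \emph{strict} morphisms (rather than merely zero morphisms) is needed. Everything else is routine bookkeeping with the initiality of $\bot_{\mb{K}}$ and the defining identities for zero morphisms.
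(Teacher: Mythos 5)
Your proof is correct and follows essentially the same route as the paper: both take the projection associated to $\bot_A$ to be the zero morphism $0_{A\bot_{\mb{K}}}$, establish $0_{A\bot_{\mb{K}}} \circ \bot_A = \ms{id}_{\bot_{\mb{K}}}$ by initiality, use the zero-morphism identity together with strictness to get $\bot_A \circ 0_{A\bot_{\mb{K}}} = 0_{AA} \sqsubseteq \ms{id}_A$, and inherit uniqueness from $\mb{K}$. Your closing remark correctly identifies strictness as the one place where more than the existence of zero morphisms is needed.
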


\begin{proof}
  We must show that $\bot_A : \bot_{\mb{K}} \to A$ is an embedding for all $A$.
  We claim that $\bot_A^p = 0_{A\bot_{\mb{K}}}$.
  We have by initiality
  \[
    \bot_{\mb{K}} \xrightarrow{\bot_A} A \xrightarrow{0_{A\bot_{\mb{K}}}} \bot_{\mb{K}} = \bot_{\mb{K}} \xrightarrow{\ms{id}} \bot_{\mb{K}}.
  \]
  By definition of zero morphism, $\bot_A \circ \bot_A^p = \bot_A \circ 0_{A\bot_{\mb{K}}} = 0_{AA}$.
  By definition of strict morphism, $0_{AA} \sqsubseteq \ms{id}_A$.
  So $\bot_A \circ \bot_A^p \sqsubseteq \ms{id}_A$ by transitivity.
  We conclude that $\bot_A$ is an embedding.
  Because $A$ was arbitrary, $\bot$ lies in $\mb{K}^e$.
  $\mb{K}^e$ is a subcategory of $\mb{K}$, so uniqueness of $\bot$ is inherited.
  We conclude that $\bot$ is initial in $\mb{K}^e$.\qed
\end{proof}

\PropMainE*

\begin{proof}
  The functor $\FIX$ is the composition of locally continuous functors
  \[
    \left[\mb{D} \times \mb{E} \lcto \mb{E}\right] \xrightarrow{\Lambda} \left[\mb{D} \lcto \left[ \mb{E} \lcto \mb{E}\right]\right] \xrightarrow{F \mapsto FIX \circ F} \left[\mb{D} \lcto \mb{E} \right].
  \]
  Locally continuous functors are closed under composition.
  Let $F, G : \mb{D} \times \mb{E} \to \mb{E}$ be locally continuous and $\eta : F \nto G$ a natural transformation.
  We have
  \[
    \sfix{F} = \left([\ms{id}_{\mb{D}} \to \FIX] \circ \Lambda\right)F = \FIX \circ \Lambda F \circ \ms{id}_{\mb{D}} = \FIX \circ \Lambda F,
  \]
  so
  \[
    \sfix{F}D = (\FIX \circ \Lambda F)D = \FIX(\Lambda F D) = \FIX(F_D).
  \]
  Analogously,
  \[
    \sfix{\eta} = \left([\ms{id}_{\mb{D}} \to \FIX] \circ \Lambda\right)\eta = \FIX (\Lambda \eta) \ms{id}_{\mb{D}} = \FIX (\Lambda \eta),
  \]
  so
  \[
    \left(\sfix{\eta}\right)_D = (\FIX (\Lambda \eta))_D = \FIX((\Lambda \eta)_D).
  \]
  This completes the proof.
  \qed
\end{proof}

\begin{proposition}
  \label{prop:main:16}
  {\CFP} is a cartesian closed 2-category.
\end{proposition}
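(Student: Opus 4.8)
The plan is to show that {\CFP} inherits the cartesian closed 2-categorical structure of {\O}. By \cref{prop:main:6}, {\O} is a cartesian closed 2-category, and {\CFP} is by definition a full sub-2-category of {\O}. Since {\CFP} is full, it suffices to show that its objects are closed under the terminal object, binary products, and exponentials of {\O}: the projection, evaluation, and abstraction functors are locally continuous (\cref{prop:main:7,lemma:main:7,lemma:main:5}) and hence are automatically morphisms of {\CFP}, while the witnessing 2-cells and their universal properties are inherited from {\O}. The entire argument thus reduces to verifying that \emph{support for canonical fixed points}---an initial object, strict morphisms, and \O-cocompleteness---is preserved by these three constructions.

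The terminal object $\mb{1}$ and binary products are straightforward. In $\mb{1}$ the unique object is initial, its only morphism is simultaneously the zero morphism and the least element of the singleton hom-dcpo, and every $\omega$-chain in $\mb{1}^e$ is constant with an evident \O-colimit. For a product $\mb{A} \times \mb{B}$ of objects of {\CFP}, the initial object is $(\bot_{\mb{A}}, \bot_{\mb{B}})$, while the zero morphisms and the order are computed component-wise, so strictness holds component-wise. Since $(\mb{A} \times \mb{B})^e = \mb{A}^e \times \mb{B}^e$ with composition and directed suprema taken component-wise, \O-colimits are likewise computed component-wise, and hence $\mb{A} \times \mb{B}$ is \O-cocomplete.

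The exponential $[\mb{A} \lcto \mb{B}]$, with $\mb{A}$ small and $\mb{B}$ supporting canonical fixed points, is the crux. It is a small \O-category by \cref{prop:main:11}. Its initial object is the constant functor $K_\bot$ sending every object to $\bot_{\mb{B}}$ and every morphism to $\ms{id}_{\bot_{\mb{B}}}$; this is locally continuous, and for any $F$ the components $\bot_{FA}$ assemble into the unique natural transformation $K_\bot \nto F$, naturality being supplied by the naturality of the initiality cone $\bot : \bot_{\mb{B}} \nto \ms{id}_{\mb{B}}$. Strict morphisms are given pointwise: the zero morphism $F \nto G$ has components $0_{FA,GA}$, which is natural by the zero-morphism laws of $\mb{B}$ and is least because the order on natural transformations is pointwise. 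For \O-cocompleteness I would compute colimits pointwise: given an $\omega$-chain $\Phi$ in $[\mb{A} \lcto \mb{B}]^e$, each object $A$ yields a chain $(\Phi_n A)_n$ in $\mb{B}^e$, so I set $\Psi A = \colim_\omega\bigl((\Phi_n A)_n\bigr)$ and define $\Psi$ on morphisms via the functoriality of $\colim_\omega$ from \cref{prop:main:10}, so that the pointwise cocones assemble into a cocone $\kappa : \Phi \nto \Psi$. Because the order, composition, and directed suprema of $[\mb{A} \lcto \mb{B}]$ are all pointwise, the \O-colimit condition $\dirsup_{n \in \N} \kappa_n \circ \kappa_n^p = \ms{id}_\Psi$ reduces to the corresponding pointwise condition in $\mb{B}$, which holds by construction.

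The main obstacle is precisely this last step: verifying that the pointwise assignment $\Psi$ is a genuine locally continuous functor and that $\kappa$ is a natural family of embeddings. Functoriality of $\Psi$ and naturality of $\kappa$ follow from the uniqueness of mediating morphisms for \O-colimits (\cref{prop:33}), and local continuity of $\Psi$ follows as in \cref{prop:main:10}, since directed suprema in $\mb{B}$ are computed pointwise and composition is continuous; once $\Psi$ and $\kappa$ are in hand, the \O-colimit verification above is immediate. With all three constructions shown to preserve support for canonical fixed points, {\CFP} is a cartesian closed 2-category by \cref{prop:main:6}.
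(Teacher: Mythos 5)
Your proof is correct in outline, but it takes a genuinely different route from the paper. The paper's proof is essentially a citation: it observes that {\CFP} is precisely the 2-category $\mb{Kind}$ of \textcite{fiore_1994:_axiom_domain_theor} (checking that ``ep-zero'' matches ``initial object plus strict morphisms'' via \cref{prop:main:14}, and that \O-cocompleteness matches having colimits of $\omega$-chains of embeddings via \cref{prop:33}), and then invokes Fiore's Theorem~7.3.11. You instead give a direct, self-contained verification that the full sub-2-category {\CFP} of {\O} is closed under the terminal object, products, and exponentials of {\O}, reducing everything to showing that support for canonical fixed points is preserved by these constructions. Your approach buys a proof that does not depend on an external reference and makes visible exactly which structure is being preserved; the paper's buys brevity and reuses a result already proved in full generality. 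One point you should tighten: in the exponential case, for the pointwise cocone $\kappa$ to be an \O-colimit in $[\mb{A} \lcto \mb{B}]$ you need each $\kappa_n$ to be an embedding \emph{in the functor category}, i.e.\ the pointwise projections $((\kappa_n)_A^p)_A$ must themselves be natural. This does not follow from uniqueness of mediating morphisms alone; it is a short computation using the explicit formula $\Psi f = \dirsup_m (\kappa_m)_{A'} \circ \Phi_m f \circ (\kappa_m)_A^p$ together with the fact that the connecting maps $\Phi(n \to m)$ are embeddings in $[\mb{A} \lcto \mb{B}]$ (so their projections are natural), not merely pointwise embeddings. With that computation supplied, your argument goes through.
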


\begin{proof}
  The cartesian-closed and 2-categorical structures are inherited from \O.

  {\CFP} is exactly the category $\mb{Kind}$ of \cite[Definition~7.3.11]{fiore_1994:_axiom_domain_theor}: ``the 2-category of small $\mb{Cpo}$-categories with ep-zero and colimits of $\omega$-chains of embeddings, $\mb{Cpo}$-functors and natural transformations''.
  A \defin{$\mb{Cpo}$-category}~\cite[25]{fiore_1994:_axiom_domain_theor} is exactly an \O-category.
  An \defin{ep-zero}~\cite[Definition~7.1.1]{fiore_1994:_axiom_domain_theor} is a zero object such that every morphism with it as a source is an embedding.
  The objects of {\CFP} are \O-categories that support canonical fixed points, \ie, they have strict morphisms and an initial object.
  By \cref{prop:main:14}, this implies they have an ep-zero.
  The objects of {\CFP} are \O-cocomplete \O-categories $\mb{K}$.
  This mean that every $\omega$-chain of embeddings in $\mb{K}$ has an \O-colimit.
  By \cref{prop:33}, this implies that every $\omega$-chain of embeddings has a colimit.
  Conversely, if every $\omega$-chain of embeddings in $\mb{K}$ has a colimit, then by \cref{prop:33} it has an \O-colimit, so is $\O$-cocomplete.
  So $\mb{Kind}$ and {\CFP} have the same objects.
  $\mb{Cpo}$-functors are exactly locally continuous functors~\cite[25]{fiore_1994:_axiom_domain_theor}.
  So $\mb{Kind}$ and {\CFP} are exactly the same category.
  It is a cartesian closed 2-category by \cite[Theorem~7.3.11]{fiore_1994:_axiom_domain_theor}.\qed
\end{proof}

Recall that we write $F_D$ for the partial application $\Lambda F D = F(D,{-})$.

\begin{proposition}[\Cref{prop:32}]
  Let $\mb{E}$ and $\mb{D}$ be $\mb{O}$-categories.
  Assume $\mb{E}$ supports canonical fixed points.
  Let $F : \mb{D} \times \mb{E} \to \mb{E}$ be a locally continuous functor.
  There exist natural transformations
  \begin{align*}
    \ms{Unfold}^F &: \sfix F \nto F \circ \langle \ms{id}_{\mb{D}}, \sfix F \rangle\\
    \ms{Fold}^F &: F \circ \langle \ms{id}_{\mb{D}}, \sfix F \rangle \nto \sfix F
  \end{align*}
  that form a natural isomorphism $\sfix F \cong F \circ \langle \ms{id}_{\mb{D}}, \sfix F \rangle$.
  Let $D$ be an object of $\mb{D}$.
  The $D$-components for these natural transformations are given by
  \begin{align*}
    \ms{Unfold}^F_D &= \ms{unfold}_{\lk{\bot}{\bot}{F_D}} : \GFIX\lk{\bot}{\bot}{F_D} \to \UNF\lk{\bot}{\bot}{F_D}\\
    \ms{Fold}^F_D &= \ms{fold}_{\lk{\bot}{\bot}{F_D}} : \UNF\lk{\bot}{\bot}{F_D} \to \GFIX\lk{\bot}{\bot}{F_D}
  \end{align*}
  where $\ms{unfold}$ and $\ms{fold}$ are the natural isomorphisms given by \cref{prop:main:3}.
  The definitions of $\ms{Fold}^F$ and $\ms{Unfold}^F$ are natural in $F$.
  Given any natural transformation $\eta : F \nto G$, the following two squares commute:
  \[
    \begin{tikzcd}[column sep=4em, arrows=Rightarrow]
      \sfix{F}
      \ar[r, "{\ms{Unfold}^F}"]
      \ar[d, swap, "{\sfix\eta}"]
      & F\circ\langle\ms{id}, \sfix{F} \rangle
      \ar[d, "{\eta \ast \langle \ms{id}, \sfix{\eta} \rangle}"]
      &F\circ\langle\ms{id}, \sfix{F} \rangle
      \ar[r, "{\ms{Fold}^F}"]
      \ar[d, swap, "{\eta \ast \langle \ms{id}, \sfix{\eta} \rangle}"]
      & \sfix{F}
      \ar[d, "{\sfix\eta}"]\\
      \sfix{G} \ar[r, "{\ms{Unfold}^G}"]
      & G\circ\langle\ms{id}, \sfix{G} \rangle
      & G\circ\langle\ms{id}, \sfix{G} \rangle
      \ar[r, "{\ms{Fold}^G}"]
      & \sfix{G}
    \end{tikzcd}
  \]
\end{proposition}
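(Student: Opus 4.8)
The plan is to realize $\ms{Fold}^F$ and $\ms{Unfold}^F$ by whiskering the natural isomorphism of \cref{prop:main:3} along the functor $\mb{D} \to \Links_{\mb{E}}$ that sends $D$ to $\lk{\bot}{\bot}{F_D}$. First I would record the types. By \cref{prop:main:4}, $\GFIX\lk{\bot}{\bot}{F_D} = \FIX(F_D) = \sfix{F}D$, and by \cref{prop:main:2}, $\UNF\lk{\bot}{\bot}{F_D} = F_D(\GFIX\lk{\bot}{\bot}{F_D}) = F(D, \sfix{F}D) = (F \circ \langle \ms{id}_{\mb{D}}, \sfix{F} \rangle)D$. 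Hence $\ms{fold}_{\lk{\bot}{\bot}{F_D}}$ and $\ms{unfold}_{\lk{\bot}{\bot}{F_D}}$ already carry the claimed domains and codomains, and they are mutually inverse by \cref{prop:main:3}. It then remains to check that these components are natural in $D$ (so that they assemble into the asserted natural isomorphism $\sfix{F} \cong F \circ \langle \ms{id}_{\mb{D}}, \sfix{F} \rangle$) and natural in $F$ (so that the two displayed squares commute).

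The technical heart is a single computation. For locally continuous $H, H' : \mb{E} \to \mb{E}$ and a natural transformation $\zeta : H \nto H'$, the pair $(\ms{id}_\bot, \zeta)$ is the image $I(\zeta)$ of the embedding $I$ of \cref{prop:main:9}, hence a morphism $\lk{\bot}{\bot}{H} \to \lk{\bot}{\bot}{H'}$ of $\Links_{\mb{E}}$. I claim that
\[
  \UNF(\ms{id}_\bot, \zeta) = \zeta_{\GFIX\lk{\bot}{\bot}{H'}} \circ H\bigl(\GFIX(\ms{id}_\bot, \zeta)\bigr).
\]
To establish it I would start from the morphism formula of \cref{prop:main:2}, use $\Omega(\ms{id}_\bot, \zeta)_{n} = \zeta^{(n)}_\bot$ from \cref{eq:main:3}, expand $\zeta^{(n+1)}_\bot = \zeta_{(H')^n\bot} \circ H(\zeta^{(n)}_\bot)$ through the horizontal composite, apply naturality of $\zeta$ at the colimit leg $\gamma_n$ to rewrite $H'\gamma_n \circ \zeta_{(H')^n\bot} = \zeta_{\GFIX\lk{\bot}{\bot}{H'}} \circ H\gamma_n$, and finally pull the directed supremum out through $H$ by local continuity, recognizing $\GFIX(\ms{id}_\bot, \zeta)$ from \cref{prop:main:8}.

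With this identity in hand, both naturality claims follow by instantiating the bidirectional square \cref{eq:main:9} of \cref{prop:main:3}. For naturality in $D$, I would take $\zeta = \Lambda F d : F_D \nto F_{D'}$ for a morphism $d : D \to D'$; then $\GFIX(\ms{id}_\bot, \zeta) = \sfix{F}d$ (since $\GFIX \circ I = \FIX$ and $\sfix{F} = \FIX \circ \Lambda F$), while the identity gives $\UNF(\ms{id}_\bot, \zeta) = F(d, \ms{id}) \circ F(\ms{id}, \sfix{F}d) = F(d, \sfix{F}d) = (F \circ \langle \ms{id}_{\mb{D}}, \sfix{F} \rangle)d$, so \cref{eq:main:9} is exactly the naturality square for $\ms{Fold}^F$ and $\ms{Unfold}^F$. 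For naturality in $F$, I would take $\zeta = (\Lambda\eta)_D : F_D \nto G_D$; then $\GFIX(\ms{id}_\bot, \zeta) = (\sfix{\eta})_D$ by \cref{prop:main:4}, and the identity gives $\UNF(\ms{id}_\bot, \zeta) = \eta_{(D, \sfix{G}D)} \circ F_D\bigl((\sfix{\eta})_D\bigr) = \bigl(\eta \ast \langle \ms{id}, \sfix{\eta} \rangle\bigr)_D$, so \cref{eq:main:9} becomes the two squares of the statement evaluated componentwise at $D$.

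The main obstacle I anticipate is the displayed computation of $\UNF(\ms{id}_\bot, \zeta)$: keeping the indices of the horizontal iterates straight and correctly combining local continuity of $H$ with naturality of $\zeta$ to collapse the double supremum into the stated form. Once that identity is proved, both naturality assertions are immediate, since the whiskered transformations inherit their mutually inverse relationship and their commuting squares directly from $\ms{fold}$ and $\ms{unfold}$ via \cref{prop:main:3}.
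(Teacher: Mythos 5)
Your proposal is correct and follows essentially the same route as the paper: both reduce the naturality of $\ms{Fold}^F$ and $\ms{Unfold}^F$ in $D$ and in $F$ to instances of the naturality square of \cref{prop:main:3} applied to the morphisms $(\ms{id}_\bot, \Lambda F f)$ and $(\ms{id}_\bot, (\Lambda\eta)_D)$ in $\Links_{\mb{E}}$. The one difference is that you isolate and verify the identification $\UNF(\ms{id}_\bot,\zeta) = \zeta_{\GFIX\lk{\bot}{\bot}{H'}} \circ H\bigl(\GFIX(\ms{id}_\bot,\zeta)\bigr)$ as an explicit lemma (and your computation of it is sound), whereas the paper simply asserts that the relevant squares ``are exactly'' the ones from \cref{eq:main:9}; making that step explicit is a genuine improvement in rigor, not a deviation in strategy.
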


\begin{proof}[of \cref{prop:32}]
  We begin by noting that the components of $\ms{Unfold}$ and $\ms{Fold}$ have the right domains and codomains.
  We first note that by \cref{prop:main:4} and the definition of $\FIX$,
  \[
    \GFIX\lk{\bot}{\bot}{F_D} = \FIX(F_D) = \sfix{F}D.
  \]
  Also,
  \[
    \UNF\lk{\bot}{\bot}{F_D} = F_D(\FIX(F_D)) = F(\sfix{F}D, D) = \left(F \circ \langle \ms{id}_{\mb{D}}, \sfix{F} \rangle \right)D.
  \]
  Because $\ms{Unfold}$ and $\ms{Fold}$ are component-wise isomorphisms, we need only show that the components form a natural family.

  Let $f : C \to D$ be an arbitrary morphism in $\mb{D}$.
  Then $F_f = \Lambda Ff : F_C \nto F_D$ is a natural transformation.
  To that $\ms{Unfold}$ is natural, must show that the following square commutes:
  \[
    \begin{tikzcd}[column sep=5em]
      \sfix{F}C
      \ar[r, "{\ms{Unfold}^F_C}"]
      \ar[d, swap, "{\sfix{F}f}"]
      &
      (\sfix{F} \circ \langle \ms{id}_{\mb{D}}, \sfix F \rangle)C
      \ar[d, "{(\sfix{F} \circ \langle \ms{id}_{\mb{D}} \sfix F \rangle)f}"]
      \\
      \sfix{F}D
      \ar[r, "{\ms{Unfold}^F_D}"]
      &
      (\sfix{F} \circ \langle \ms{id}_{\mb{D}}, \sfix F \rangle) D.
    \end{tikzcd}
  \]
  This is exactly the following square, which commutes by \cref{prop:main:3}:
  \[
    \begin{tikzcd}[column sep=7em]
      \GFIX\lk{\bot}{\bot}{F_C}
      \ar[r, "{\ms{unfold}_{\lk{\bot}{\bot}{F_C}}}"]
      \ar[d, swap, "{\GFIX(\ms{id}, F_f)}"]
      &
      \UNF\lk{\bot}{\bot}{F_C}
      \ar[d, "{\UNF\lk{\bot}{\bot}{F_f}}"]
      \\
      \GFIX\lk{\bot}{\bot}{F_D}
      \ar[r, "{\ms{unfold}_{\lk{\bot}{\bot}{F_D}}}"]
      &
      \UNF\lk{\bot}{\bot}{F_D}.
    \end{tikzcd}
  \]
  Because $\ms{Fold}$ and $\ms{Unfold}$ form an isomorphism, we conclude by~\cite[Lemma~1.5.10]{riehl_2016:_categ_theor_contex} that $\ms{Fold}$ is also natural.

  Next, we show that the definitions of $\ms{Fold}^F$ and $\ms{Unfold}^F$ are natural in $F$, \ie, that for any natural transformation $\eta : F \nto G$, the following two squares commute:
  \[
    \begin{tikzcd}[column sep=4em, arrows=Rightarrow]
      \sfix{F}
      \ar[r, "{\ms{Unfold}^F}"]
      \ar[d, swap, "{\sfix\eta}"]
      & F\circ\langle\ms{id}, \sfix{F} \rangle
      \ar[d, "{\eta \ast \langle \ms{id}, \sfix{\eta} \rangle}"]
      &F\circ\langle\ms{id}, \sfix{F} \rangle
      \ar[r, "{\ms{Fold}^F}"]
      \ar[d, swap, "{\eta \ast \langle \ms{id}, \sfix{\eta} \rangle}"]
      & \sfix{F}
      \ar[d, "{\sfix\eta}"]\\
      \sfix{G} \ar[r, "{\ms{Unfold}^G}"]
      & G\circ\langle\ms{id}, \sfix{G} \rangle
      & G\circ\langle\ms{id}, \sfix{G} \rangle
      \ar[r, "{\ms{Fold}^G}"]
      & \sfix{G}
    \end{tikzcd}
  \]
  Because $\ms{Fold}^F$ and $\ms{Unfold}^F$ are isomorphisms, one square commutes if and only if the other does.
  The left square commutes if and only if every component does, \ie, if and only if for every object $D$ of $\mb{D}$, the following square commutes:
  \[
    \begin{tikzcd}[column sep=4em]
      \sfix{F}D
      \ar[r, "{\ms{Unfold}^F_D}"]
      \ar[d, swap, "{(\sfix\eta)_D}"]
      & (F\circ\langle\ms{id}, \sfix{F} \rangle)D
      \ar[d, "{(\eta \ast \langle \ms{id}, \sfix{\eta} \rangle)_D}"]
      \\
      \sfix{G}D \ar[r, "{\ms{Unfold}^G_D}"]
      & (G\circ\langle\ms{id}, \sfix{G} \rangle)D.
    \end{tikzcd}
  \]
  By \cref{prop:main:4} and the above, it is exactly the following square:
  \[
    \begin{tikzcd}[column sep=7em]
      \GFIX\lk{\bot}{\bot}{F_D}
      \ar[r, "{\ms{unfold}_{\lk{\bot}{\bot}{F_D}}}"]
      \ar[d, swap, "{\GFIX(\ms{id}, (\Lambda\eta)_D)}"]
      &
      \UNF\lk{\bot}{\bot}{F_D}
      \ar[d, "{\UNF(\ms{id}, (\Lambda\eta)_D)}"]
      \\
      \GFIX\lk{\bot}{\bot}{G_D}
      \ar[r, "{\ms{unfold}_{\lk{\bot}{\bot}{G_D}}}"]
      &
      \UNF\lk{\bot}{\bot}{G_D}.
    \end{tikzcd}
  \]
  It commutes by \cref{prop:main:3}.
  We conclude that the definition of $\ms{Unfold}^F$ (and also $\ms{Fold}^F$) is natural in $F$.
  \qed
\end{proof}

\propMainBD*

\begin{proof}
  To show \cref{eq:main:10}, we first observe that by naturality of $\Lambda$,
  \[
    \Lambda(F_G) = \Lambda(F \circ (G \times \ms{id}_{\mb{E}})) = (\Lambda F) \circ G.
  \]
  Then by \cref{prop:main:4},
  \[
    \sfix{F_G} = ([ \ms{id}_{\mb{D}} \to \FIX ] \circ \Lambda)(F_G) = \left(\left([ \ms{id}_{\mb{D}} \to \FIX ] \circ \Lambda\right)(F)\right) \circ G = \sfix{F} \circ G.
  \]
  \Cref{eq:main:11} follows from \cref{eq:main:10} and the following calculation:
  \begin{align*}
    &F_G \circ \langle \ms{id}_{\mb{C}}, \sfix{F_G} \rangle\\
    &= F \circ (G \times \ms{id}_{\mb{E}}) \circ \langle \ms{id}_{\mb{C}}, \sfix{F} \circ G \rangle\\
    &= F \circ \langle G \circ \ms{id}_{\mb{C}}, \ms{id}_{\mb{E}} \circ \sfix{F} \circ G \rangle\\
    &= F \circ \langle \ms{id}_{\mb{D}} \circ G, \sfix{F} \circ G \rangle\\
    &= F \circ \langle \ms{id}_{\mb{D}}, \sfix{F} \rangle \circ G.
  \end{align*}
  We use naturality and the 2-categorical structure of {\O} to show \cref{eq:main:14}:
  \begin{align*}
    &\sfix{\left(\phi \ast (\gamma \times \ms{id})\right)}\\
    &= ([ \ms{id}_{\mb{D}} \to \FIX ] \circ \Lambda)\left(\phi \ast (\gamma \times \ms{id})\right)\\
    &= \left(([ \ms{id}_{\mb{D}} \to \FIX ] \circ \Lambda)\phi\right) \ast \gamma\\
    &= \sfix{\phi} \ast \gamma.
  \end{align*}
  \Cref{eq:main:12} follows from \cref{prop:32}.
  Let $C$ be an arbitrary object in $\mb{C}$, then the $C$-component of $\ms{Fold}^{F_G}$ is:
  \[
    \ms{Fold}^{F_G}_C = \ms{fold}_{\bot,\bot,F^G_C} = \ms{fold}_{\bot,\bot,F_{GC}} = \ms{Fold}^F_{GC} = (\ms{Fold}^FG)_C.
  \]
  The proof of \cref{eq:main:13} is analogous.
  This completes the proof.\qed
\end{proof}

\section{Proofs for \cref{sec:canon-fixed-points}}
\label{sec:proofs-canon-fixed-points}

\begin{proposition}
  \label{prop:main:20}
  Let $\mb{K}$ be an $\mb{O}$-category with an initial object and strict morphisms.
  Let $F : \mb{K} \to \mb{K}$ be locally continuous.
  The following defines a functor $\Cone^F : \algc{\mb{K}}{F} \to \cel{\Cone_{\mb{K}}\left(\lk{\bot}{\bot}{F}, {-}\right)}$:
  \begin{itemize}
  \item on objects: $\Cone^F(A, a) = (\alpha, A)$ where $\alpha : \Omega\lk{\bot}{\bot}{F} \nto A$ is inductively defined by $\alpha_0 = \bot_A$ and $\alpha_{n + 1} = a \circ F\alpha_n$
  \item on morphisms: $\Cone^Ff = f$.
  \end{itemize}
  ${\Cone^F}$ restricts to a functor $\algc{(\mb{K}^e)}{F} \to \cel{\Cone_{\mb{K}^e}\left(\Omega\lk{\bot}{\bot}{F}, {-}\right)}$ whenever $F$ is locally continuous.
  ${\Cone^F}$ and its restriction are locally continuous.
\end{proposition}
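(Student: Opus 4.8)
The plan is to verify, in order, that $\Cone^F$ is well-defined on objects, well-defined on morphisms, functorial, that it restricts to the embedding subcategories, and finally that it (and its restriction) is locally continuous. Almost every step is a short induction or is forced by the fact that $\Cone^F$ acts as the identity on underlying $\mb{K}$-morphisms; the only substantive work is setting up the $\mb{O}$-category structure needed to even speak of local continuity.

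First I would check that $(\alpha, A)$ really is an object of $\cel{\Cone_{\mb{K}}(\Omega\lk{\bot}{\bot}{F}, {-})}$, i.e.\ that $\alpha$ is a cocone under $\Omega\lk{\bot}{\bot}{F}$. The transition maps of this chain are $\Omega\lk{\bot}{\bot}{F}(n \to n+1) = F^n\bot$, so I must show $\alpha_{n+1} \circ F^n\bot = \alpha_n$ for all $n$. I proceed by induction: the base case $\alpha_1 \circ \bot = \bot_A = \alpha_0$ holds because both sides are morphisms out of the initial object $\bot_{\mb{K}}$, and the step computes $\alpha_{n+2} \circ F^{n+1}\bot = a \circ F(\alpha_{n+1} \circ F^n\bot) = a \circ F\alpha_n = \alpha_{n+1}$ using functoriality of $F$ and the induction hypothesis. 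This $\alpha$ is exactly the cocone of \cref{prop:41}. Well-definedness on morphisms is a parallel induction: given an $F$-algebra homomorphism $f : (A,a) \to (B,b)$, I must show $f \circ \alpha_n = \beta_n$ for all $n$, where $\beta$ is the cocone attached to $(B,b)$. The base case is again initiality of $\bot_{\mb{K}}$, and the step uses the homomorphism identity $f \circ a = b \circ Ff$ together with functoriality to get $f \circ \alpha_{n+1} = b \circ F(f \circ \alpha_n) = b \circ F\beta_n = \beta_{n+1}$. Functoriality of $\Cone^F$ is then immediate, since $\Cone^F f = f$ preserves identities and composites on the nose.

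For the restriction I would observe that when $a$ and $f$ are embeddings, so is every $\alpha_n$: the base case $\alpha_0 = \bot_A$ is an embedding by \cref{prop:main:14}, and in the step $\alpha_{n+1} = a \circ F\alpha_n$ is a composite of embeddings, using that $F$ preserves embeddings (\cref{lemma:15}) and that embeddings compose. Since the transition maps $F^n\bot$ are themselves embeddings (\cref{prop:main:1}), $(\alpha, A)$ is a cocone in $\mb{K}^e$ and $f$ is a cocone morphism in $\mb{K}^e$, so $\Cone^F$ restricts as claimed.

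The one place that needs genuine care is local continuity, and there the work is in pinning down the $\mb{O}$-category structure on the source and target. A hom-set of $\algc{\mb{K}}{F}$ is the subset of $\mb{K}(A,B)$ cut out by $f \circ a = b \circ Ff$, and a hom-set of $\cel{\Cone_{\mb{K}}(\Omega\lk{\bot}{\bot}{F}, {-})}$ is the subset of $\mb{K}(C,C')$ cut out by $g \circ \lambda = \mu$. I would verify that both are closed under directed suprema, whence they are dcpos: for the algebra hom-sets, $(\dirsup D) \circ a = \dirsup_{f \in D}(f \circ a) = \dirsup_{f \in D}(b \circ Ff) = b \circ F(\dirsup D)$ using continuity of composition in $\mb{K}$ and local continuity of $F$; for the cocone hom-sets, $(\dirsup D) \circ \lambda_n = \dirsup_{g \in D}(g \circ \lambda_n) = \mu_n$ using continuity of composition alone. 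Composition is continuous because it is so in $\mb{K}$. With these structures fixed, local continuity of $\Cone^F$ is automatic: it acts as the identity on underlying morphisms, and directed suprema in all the relevant hom-sets are computed as in $\mb{K}$, so $\Cone^F(\dirsup D) = \dirsup D = \dirsup \Cone^F(D)$. The identical remark applies to the restriction, the hom-sets of embedding-homomorphisms being dcpos under the same ambient suprema. Thus the main (and essentially only) obstacle is the directed-completeness of the algebra- and cocone-hom-sets; once that is in place the continuity claim is forced by the identity-on-morphisms behaviour of $\Cone^F$.
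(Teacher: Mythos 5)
Your proof follows the same route as the paper's: the same inductions establish the cocone property and the compatibility with $F$-algebra homomorphisms, the restriction to embeddings uses \cref{prop:main:14} and \cref{lemma:15} exactly as the paper does, and local continuity is forced by the identity action on underlying morphisms. The only difference is that you explicitly verify that the hom-sets of $\algc{\mb{K}}{F}$ and of $\cel{\Cone_{\mb{K}}(\Omega\lk{\bot}{\bot}{F},{-})}$ are sub-dcpos of the corresponding hom-sets of $\mb{K}$, a point the paper dismisses as clear; this is a welcome filling-in of detail rather than a divergence.
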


\begin{proof}
  We begin by checking that the functor is well-defined on objects.
  Let $(A, a)$ be an $F$-algebra.
  We show that $\alpha$ is a cocone on $\Omega\lk{\bot}{\bot}{F}$ with nadir $A$.
  We must show that for all $n$, $\alpha_n = \alpha_{n + 1} \circ \Omega\lk{\bot}{\bot}{F}(n \to n + 1)$.
  We do so by induction on $n$.
  When $n = 0$, we have by initiality that
  \[
    \alpha_0 = \bot_A = a \circ F\bot_A \circ \bot_{F_\bot} = \alpha_1 \circ \Omega\lk{\bot}{\bot}{F}(0 \to 1).
  \]
  Assume the result for some $n$, then
  \begin{align*}
    \alpha_{n+1} &= a \circ F\left(\alpha_{n}\right)\\
                 &= a \circ F\left(\alpha_{n+1} \circ \Omega\lk{\bot}{\bot}{F}(n \to n + 1)\right)\\
                 &= a \circ F(\alpha_{n+1}) \circ F\left(\Omega\lk{\bot}{\bot}{F}(n \to n + 1)\right)\\
                 &= a \circ F(\alpha_{n+1}) \circ \Omega\lk{\bot}{\bot}{F}(n + 1 \to n + 2)\\
                 &= \alpha_{n + 2} \circ \Omega\lk{\bot}{\bot}{F}(n + 1 \to n + 2).
  \end{align*}
  We conclude that $\alpha$ is a cocone.
  By \cref{prop:main:14}, $\bot_A$ is an embedding.
  Recall that locally continuous functors preserve embeddings.
  If $a$ is also an embedding, then induction gives that each $\alpha_n$ is an embedding, \ie, that $(\alpha, A)$ lies in $\cel{\Cone_{\mb{K}^e}\left(\Omega\lk{\bot}{\bot}{F}, {-}\right)}$.

  The action of $\Cone^F$ on morphisms is clearly functorial and locally continuous.
  Let $(A, a)$ and $(B, b)$ be $F$-algebras and let $(\alpha, A)$ and $(\beta, B)$ be their respective images under $\Cone^F$.
  We must show that if $f : (A, a) \to (B, b)$ is an $F$-algebra homomorphism, then it is a morphism of cocones.
  In particular, we must show that for all $n \in \N$, $f \circ \alpha_n = \beta_n$.
  We do so by induction on $n$.
  When $n = 0$, we have by initiality that $f \circ \alpha_0 = \bot_B = \beta_0$.
  Assume the result for some $n$.
  Because $f$ is an $F$-algebra homomorphism, $f \circ a = b \circ Ff$.
  It follows that:
  \[
    f \circ \alpha_{n + 1} = f \circ a \circ F\alpha_n = b \circ Ff \circ F\alpha_n = b \circ F(f \circ \alpha_n) = b \circ F\beta_n = \beta_{n + 1}.
  \]
  We conclude the result by induction.
  \qed
\end{proof}

We recognize the cocone given in \cref{item:3} of \cref{prop:41} as cocone $\Cone^F(A, a)$ given by \cref{prop:main:20}.

\propEB*

\begin{proof}
  We begin by showing \cref{item:16}.
  We show that $(\FIX(F), \ms{fold})$ is initial.
  Let $(A, a)$ be any other $F$-algebra, and let $\alpha : \Omega\lk{\bot}{\bot}{F} \nto A$ be given as in the statement.
  It is the cocone $\Cone^F(A, a) = (\alpha, A)$ given by \cref{prop:main:20}.

  By \cref{prop:33}, the mediating morphism $(\kappa, \FIX(F)) \to (\alpha, A)$ of cocones is the embedding
  \[
    \phi = \dirsup_{n \in \N} \alpha_n \circ \kappa_n^p.
  \]
  We claim that it is an $F$-algebra homomorphism $(\FIX(F), \ms{fold}) \to (A, a)$.
  We use the fact that $F$ is locally continuous and compute:
  \begin{align*}
    &\phi \circ \ms{fold}\\
    &= \left(\dirsup_{n \in \N} \alpha_n \circ \kappa_n^p\right) \circ \left(\dirsup_{n \in \N} \kappa_{n+1} \circ F\kappa_n^p\right)\\
    &= \dirsup_{n \in \N} \alpha_{n + 1} \circ \kappa_{n + 1}^p \circ \kappa_{n+1} \circ F\kappa_n^p\\
    &= \dirsup_{n \in \N} \alpha_{n + 1} \circ \circ F\kappa_n^p\\
    &= \dirsup_{n \in \N} a \circ F\alpha_n \circ \circ F\kappa_n^p\\
    &= a \circ F\left(\dirsup_{n \in \N} \alpha_n \circ \kappa_n^p\right)\\
    &= a \circ F\phi.
  \end{align*}
  So $\phi$ is an $F$-algebra homomorphism $(\FIX(F), \ms{fold}) \to (A, a)$.

  We now show that $\phi$ is the unique such morphism.
  Let $\gamma$ be any other $F$-algebra homomorphism $(\FIX(F), \ms{fold}) \to (A, a)$.
  We show that it is a morphism of $(\kappa, \FIX(F)) \to (\alpha, A)$ of cocones.
  We use the fact that $(\kappa, \FIX(F))$ is the initial cocone to conclude that $\gamma = \phi$.
  We must show that for all $n \in \N$, $\alpha_n = \gamma \circ \kappa_n$.
  The case $n = 0$ is immediate by initiality: $\alpha_0$ and $\kappa_0$ are both morphisms $\bot \to A$.
  Assume the result for some $n$.
  Recognize $\ms{fold}$ as the mediating morphism from the cocone $(F\kappa, F(\FIX(F))) \to ((\kappa_{n+1})_n, \FIX F)$ in $\cel{\Cone(F\Omega\lk{\bot}{\bot}{F}, {-})}$.
  So $\kappa_{n + 1} = \ms{fold} \circ F\kappa_n$ for all $n \in \N$.
  We use the fact that $\gamma \circ \ms{fold} = a \circ F\gamma$ and compute:
  \begin{align*}
    &\gamma \circ \kappa_{n + 1}\\
    &= \gamma \circ \ms{fold} \circ F\kappa_n\\
    &= a \circ F\gamma \circ F\kappa_n\\
    &= a \circ F(\gamma \circ \kappa_n)\\
    &= a \circ F(\alpha_n)\\
    &= \alpha_{n + 1}.
  \end{align*}
  This establishes the first result.

  The proof of \cref{item:17} is dual to the proof of \cref{item:16}.

  We now show \cref{item:18}.
  Let $(\Gamma, \gamma)$ be an arbitrary $F$-algebra where $\gamma$ is an embedding.
  Then $(\Gamma, \gamma^p)$ is clearly an $F$-coalgebra.
  Let $\alpha : \Omega\lk{\bot}{\bot}{F} \nto C$ be given by $\alpha_0 = \bot_C$ and $\alpha_{n + 1} = c \circ F\alpha_n$.
  Let $\beta : C \nto \Omega\lk{\bot}{\bot}{F}$ be given by $\beta_{n + 1} = F\beta_n \circ c^p$.
  By \cref{item:16,item:17},
  \[
    \phi = \dirsup_{n \in \N} \alpha_n \circ \kappa_n^p \quad\text{and}\quad\rho = \dirsup_{n \in \N} \kappa_n \circ \beta_n.
  \]
  We claim that $\beta \circ \alpha = \ms{id}_{\Omega\lk{\bot}{\bot}{F}}$.
  We proceed by induction on $n$ to show that for all $n \in \N$, $\beta_n \circ \alpha_n = \ms{id}_{F^n\bot}$.
  The case $n = 0$ is obvious:
  \[
    \beta_0 \circ \alpha_0 = \bot_C^p \circ \bot_C = \bot_\bot = \ms{id}_{\bot} = \ms{id}_{F^0\bot}.
  \]
  Assume the result for some $n$, then:
  \begin{align*}
    &\beta_{n + 1} \circ \alpha_{n + 1}\\
    &= F\beta_n \circ c^p \circ c \circ F\alpha_n\\
    &= F\beta_n \circ F\alpha_n\\
    &= F(\beta_n \circ \alpha_n)\\
    &= F(\ms{id}_{F^n\bot})\\
    &= \ms{id}_{F^{n  + 1}\bot}.
  \end{align*}
  We also have that $\alpha \circ \beta \sqsubseteq \ms{id}_C$.
  We proceed by induction to show that for all $n \in \N$, $\alpha_n \circ \beta_n \sqsubseteq \ms{id}_C$.
  The case $n = 0$ is immediate by definition of e-p-pair.
  Assume the result for some $n$, then
  \begin{align*}
    &\alpha_{n + 1} \circ \beta_{n + 1}\\
    &= c \circ F\alpha_n \circ F\beta_n \circ c^p\\
    &= c \circ F(\alpha_n \circ \beta_n) \circ c^p\\
    &\sqsubseteq c \circ F(\ms{id}_C) \circ c^p
    &= c \circ c^p\\
    &\sqsubseteq \ms{id}_C.
  \end{align*}
  We now compute
  \begin{align*}
    &\rho \circ \phi\\
    &= \left(\dirsup_{n \in \N} \kappa_n \circ \beta_n\right) \circ \left(\dirsup_{n \in \N} \alpha_n \circ \kappa_n^p\right)\\
    &= \dirsup_{n \in \N} \kappa_n \circ \beta_n \circ \alpha_n \circ \kappa_n^p\\
    &= \dirsup_{n \in \N} \kappa_n \circ \kappa_n^p\\
    &= \ms{id}_C,
  \end{align*}
  where the last equality is because $\kappa$ is an \O-colimit.
  Similarly,
  \begin{align*}
    &\rho \circ \phi\\
    &= \left(\dirsup_{n \in \N} \alpha_n \circ \kappa_n^p\right) \circ \left(\dirsup_{n \in \N} \kappa_n \circ \beta_n\right)\\
    &= \dirsup_{n \in \N} \alpha_n \circ \kappa_n^p \circ \kappa_n \circ \beta_n\\
    &= \dirsup_{n \in \N} \alpha_n \circ \beta_n\\
    &\sqsubseteq \dirsup_{n \in \N} \ms{id}_C\\
    &= \ms{id}_C
  \end{align*}
  So we conclude that $(\phi, \rho)$ form an e-p-pair.\qed
\end{proof}

\propEF*

\begin{proof}
  We begin by showing \cref{item:3}.
  Let $(G, \gamma)$ be an arbitrary $F$-algebra.
  For every object $D$, $(\sfix{F}D, \ms{Fold}_D)$ is the initial $F_D$-algebra by \cref{prop:main:4,prop:32,prop:41}.
  This implies there exists a unique $F_D$-homomorphism $\phi_D : \sfix{F}D \to GD$ making the following square commute:
  \[
    \begin{tikzcd}
      F_D(\sfix{F}D)
      \ar[r, "{\ms{Fold}_D}"]
      \ar[d, swap, "{F_D\phi_D}"]
      &
      \sfix{F}D
      \ar[d, "{\phi_D}"]\\
      F_D(GD)
      \ar[r, "{\gamma_D}"]
      & GD.
    \end{tikzcd}
  \]
  We claim that these morphisms $\phi_D$ assemble into a natural transformation $\phi : \sfix{F} \to G$.
  It will immediately follow that $\phi$ is an $F$-algebra homomorphism from $(\sfix{F}, \ms{Fold})$ to $(G, \gamma)$.

  To show that $\phi$ is natural, let $f : A \to B$ be an arbitrary morphism in $\mb{D}$.
  We must show that the following square commutes:
  \[
    \begin{tikzcd}
      \sfix{F}A
      \ar[r, "{\phi_A}"]
      \ar[d, swap, "{\sfix{F}f}"]
      &
      GA
      \ar[d, "{Gf}"]
      \\
      \sfix{F}{B}
      \ar[r, "{\phi_B}"]
      & GB.
    \end{tikzcd}
  \]
  Given an $L : \mb{E} \to \mb{E}$, write $L^\omega$ for the functor $\Omega\lk{\bot}{\bot}{L}$.
  Let $\alpha : F_A^\omega \nto \sfix{F}A$ and $\beta : F_B^\omega \to \sfix{F}B$ be the chosen \O-colimits.
  Because $\mb{E}$ supports canonical fixed points, these cocones are colimiting in both $\mb{E}^e$ and $\mb{E}$.
  Let $\nu^A : F_A^\omega \to GA$ and $\nu^B : F_B^\omega \to GB$ respectively be the cocones the $F_A$-algebra $(GA, \gamma_A)$ and $F_B$-algebra $(GB, \gamma_B)$ induce via \cref{prop:main:20}.
  By comparing \cref{prop:main:20,prop:41}, we observe that $\phi_A : (\alpha, \sfix{F}A) \to (\nu^A, GA)$ and $\phi_B : (\beta, \sfix{F}B) \to (\nu^B, GB)$ are cocone morphisms.
  Write $F_f$ for the natural transformation $\Lambda F f : F_A \nto F_B$.
  We then have the following diagram in $\mb{E}$, where cocones and natural transformations are indicated by $\Rightarrow$.
  \begin{equation}
    \label[diagram]{eq:283}
    \begin{tikzcd}
      \sfix{F}A
      \ar[rr, "{\phi_A}"]
      \ar[ddd, swap, "{\sfix{F}f}"]
      &
      &
      GA \ar[ddd, "{Gf}"]
      \\
      &
      F_A^\omega
      \ar[d, Rightarrow, "{F_f^\omega}"]
      \ar[ul, Rightarrow, "{\alpha}"]
      \ar[ur, Rightarrow, swap, "{\nu^A}"]
      &
      \\
      &
      F_B^\omega
      \ar[dl, swap, Rightarrow, "{\beta}"]
      \ar[dr, Rightarrow, "{\nu^B}"]
      &
      \\
      \sfix{F}B
      \ar[rr, "{\phi_B}"]
      &
      &
      GB.
    \end{tikzcd}
  \end{equation}
  We show that $\phi_B \circ \sfix{F}f$ and $Gf \circ \phi_A$ are both mediating morphisms from the colimiting cone $\alpha$ to the cocone $\nu^B \circ F^\omega_f$.
  It will then follow by uniqueness of mediating morphisms that they are equal and that $\phi$ is natural.

  We begin with $\phi_B \circ \sfix{F}f$.
  By definition of $\sfix{F}f$, $\sfix{F}f$ is a mediating morphism from $\alpha$ to $\beta \circ F_f^\omega$.
  By the remarks above, $\phi_B$ is a mediating morphism from $\beta$ to $\nu^B$, so it is also a mediating morphism from $\beta \circ F_f^\omega$ to $\nu^B \circ F_f^\omega$.
  So going around the left and bottom sides of \cref{eq:283}, we get a mediating morphism $\phi_B \circ \sfix{F}f$ from $\alpha$ to $\nu^B \circ F_f^\omega$.

  We next show that $Gf \circ \phi_A$ is a mediating morphism.
  By definition, $\phi_A$ is a mediating morphism from $\alpha$ to $\nu^A$.
  We must now show that $Gf$ is a mediating morphism from $\nu^A$ to $\nu^B \circ F_f^\omega$, \ie, we must show that for all $n$,
  \begin{equation}
    \label{eq:25}
    Gf \circ \nu^A_n = \nu^B_n \circ \left(F^\omega_f\right)_n : F_A^n\bot{\mb{D}} \to GB
  \end{equation}
  are equal morphisms.
  We do so by induction on $n$.
  When $n = 0$, initiality gives us
  \[
    Gf \circ \nu^A_0 = \bot_{GB} = \nu^B_0 \circ \left(F^\omega_f\right)_0.
  \]
  Assume the result for some $n$.
  To show the result for $n + 1$ we must show that the outer rectangle of \cref{eq:23} commutes:
  \begin{equation}
    \label[diagram]{eq:23}
    \begin{tikzcd}[column sep=3em]
      GA \ar[rrr, "{Gf}"] & & & GB
      \\
      &
      F_AGA
      \ar[r, "{F(f,Gf)}"]
      \ar[ul, swap, "{\gamma_A}"]
      &
      F_BGB
      \ar[ur, "{\gamma_B}"]
      &
      \\
      F_A^{n+1}\bot_{\mb{E}}
      \ar[rrr, swap, "{\left(F^\omega_f\right)_{n+1}}"]
      \ar[uu, "{\nu^A_{n+1}}"]
      \ar[ur, swap, "{F_A(\nu^A_n)}"]
      & & &
      F_B^{n+1}\bot_{\mb{E}}
      \ar[uu, swap, "{\nu^B_{n+1}}"]
      \ar[ul, "{F_B(\nu^B_n)}"]
    \end{tikzcd}
  \end{equation}
  The upper trapezoid commutes by definition of $F$-algebra and the assumption that $(G, \gamma)$ was an $F$-algebra.
  The two triangles of \cref{eq:23} commute by definition of $\nu^A_{n+1}$ and $\nu^B_{n+1}$ (\cf~\cref{prop:main:20}).
  The bottom trapezoid is equal to the perimeter of \cref{eq:24}:
  \begin{equation}
    \label[diagram]{eq:24}
    \begin{gathered}
      \xymatrix@C=5em{
        F_AGA \ar[r]^{F_A(Gf)} & F_AGB \ar[r]^{\left(F_f\right)_{GB}} & F_BGB\\
        F^{n+1}_A\bot_{\mb{E}} \ar[u]^{F_A(\nu^A_n)} \ar[r]_{F_A\left(\left(F_f^\omega\right)_{n}\right)} & F_AF_B^n\bot_{\mb{E}} \ar[r]_{\left(F_f\right)_{F_B^n\bot_{\mb{E}}}} \ar[u]^{F_A(\nu^B_n)} & F^{n+1}_B\bot_{\mb{E}} \ar[u]_{F_B(\nu^B_n)}.
      }
    \end{gathered}
  \end{equation}
  Indeed, the top morphism of \cref{eq:24} is exactly $F(f,Gf)$:
  \[
    F(f,Gf) = F(f, \ms{id}_{GB}) \circ F(\ms{id}_A, Gf) = \left(F_f\right)_{GB} \circ F_A(Gf).
  \]
  The bottom morphisms are equal by definition of $F_f^\omega$ (\cref{eq:main:3}):
  \begin{align*}
    &\left(F^\omega_f\right)_{n+1}\\
    &= \Omega(\ms{id}_\bot, F^\omega_f)_{n + 1}\\
    &= F_f^{(n+1)} \ast \ms{id}_{\bot_{\mb{E}}}\\
    &= \left(F_f^{(n+1)}\right)_{\bot_{\mb{E}}}\\
    &= \left(F_f \ast F_f^{(n)}\right)_{\bot_{\mb{E}}}\\
    &= \left(F_f\right)_{F_B^n\bot_{\mb{E}}} \circ F_A\left(\left(F_f^{(n)}\right)_{\bot_{\mb{E}}}\right)\\
    &=  \left(F_f\right)_{F_B^n\bot_{\mb{E}}} \circ F_A\left(\left(F_f^\omega\right)_n\right).
  \end{align*}
  To see that \cref{eq:24} commutes, we note that the left square commutes by applying $F_A$ to the square given by the induction hypothesis.
  The right square commutes by naturality of $F_f$.
  By pasting, the perimeter commutes.
  So we conclude that the bottom trapezoid of \cref{eq:23} commutes.

  By pasting the two trapezoids and two triangles, we get that \cref{eq:23} commutes.
  \Cref{eq:25} then holds by induction, so $Gf$ is a mediating morphism from $\nu^A$ to $\nu^B \circ F_f^\omega$.
  By composing around the top and right sides of \cref{eq:283}, we get a mediating morphism from $\alpha$ to $\nu^B \circ F_f^\omega$.

  By the remarks following \cref{eq:283}, we conclude that $\phi$ is a natural transformation from $\sfix{F}$ to $G$.
  Because every component was an embedding, it is a natural family of embeddings.

  Dualizing the above argument gives us that $(\sfix{F}, \ms{Unfold})$ is a terminal $F$-coalgebra.
  The proof of \cref{item:4} is obtained from the proof of \cref{item:3} by replacing all occurrences of ``cocone'' by ``cone'', ``colimit'' by ``limit'', ``embedding'' by ``projection'', and ``initial'' by ``terminal''.

  To show \cref{item:7}, let $(A, \alpha)$ be an arbitrary $H$-algebra and assume $\alpha$ is a natural embedding.
  By definition, $\alpha^p$ is a natural projection, and $(A, \alpha^p)$ is then an $H$-coalgebra.
  Let $\phi$ and $\rho$ be the natural transformations given by \cref{item:3,item:4} and let $D$ be an object of $\mb{D}$.
  Then the $D$-components of $\phi$ and $\rho$ are respectively the mediating morphisms given by the initiality of the $F_D$-algebra $(\sfix{F}D, \ms{fold}_D)$ and the terminality of the the $F_D$-coalgebra $(\sfix{F}D, \ms{unfold}_D)$.
  By \cref{prop:41}, these form an e-p-pair.
  Because $D$ was arbitrary, it immediately follows that $(\phi, \rho)$ form an e-p-pair in $[\mb{D} \lcto \mb{E}]$.
  \qed
\end{proof}

\end{document}